\documentclass[letterpaper,12pt]{article}
\usepackage{tabularx} % extra features for tabular environment
\usepackage{amsthm}
% Enviroment of theorem and proof
\usepackage{amsmath, amssymb, bm, mathtools, mathdots}
\usepackage{floatrow}
\usepackage{bbm}
\usepackage{tikz-cd}
\usepackage{graphicx} % takes care of graphic including machinery
\usepackage[margin=1in,letterpaper]{geometry} % decreases margins
\usepackage{cite} % takes care of citations
\usepackage[all]{xy}
\usepackage{tikz}
\usetikzlibrary{arrows.meta}
%\usetikzlibrary{commutative-diagrams}
\usepackage[final]{hyperref} % adds hyper links inside the generated pdf file
\hypersetup{
	colorlinks=true,       % false: boxed links; true: colored links
	linkcolor=blue,        % color of internal links
	citecolor=blue,        % color of links to bibliography
	filecolor=magenta,     % color of file links
	urlcolor=blue         
}
\usepackage{xcolor}
%++++++++++++++++++++++++++++++++++++++++
\newcommand{\R}{\mathbb{R}} %\R for reelle tal
%\newcommand{\C}{\mathbb{C}} %\C for komplekse tal
 %\N for naturlige tal
 %\Z for hele tal
 %\Q for rationale tal
%Brug \dg til at indsætte gradtegn
\newcommand{\z}{\langle}
\newcommand{\y}{\rangle}
\newcommand{\ud}{\mathrm{d}}

%Til hyperlinks
%\usepackage[hidelinks=true]{hyperref}

\newtheorem{example}{Example}             % 整体编号

\newtheorem{theorem}{Theorem}[section]
% 按 section 编号
\newtheorem{definition}{Definition}[section]

\newtheorem{proposition}{Proposition}
\newtheorem{lemma}{Lemma}
\newtheorem{corollary}{Corollary}
\newtheorem{remark}{Remark}

\newtheorem{assumption}{Assumption}

\begin{document}
\title{Uniform-in-time propagation of chaos for second order interacting particle systems}

%\author{Yun Gong, Zhenfu Wang and Pengzhi Xie}

\author{Yun Gong \footnote{Beijing International Center for Mathematical Research, Peking University, Beijing 100871, China. 1901110013@pku.edu.cn.} 
	\quad
	Zhenfu Wang\footnote{Beijing International Center for Mathematical Research, Peking University, Beijing 100871,  China. zwang@bicmr.pku.edu.cn.} \quad 
	Pengzhi Xie\footnote{Department of Finance and Control Sciences, School  of Mathematical Sciences, Fudan University, Shanghai 200433, China. 22110180046@m.fudan.edu.cn.}
}

\date{\today}
\maketitle

\abstract{We study the long time behavior of second order particle systems interacting through global Lipschitz kernels. Combining hypocoercivity method in \cite{Hypocoercivity} and relative entropy method in \cite{JFAJW}, we are able to overcome the degeneracy of diffusion in position direction by controlling the relative entropy and relative Fisher information together. This implies the uniform-in-time propagation of chaos through the strong convergence of all marginals. Our method works at the level of Liouville equation and relies on the log Sobolev inequality of equilibrium of Vlasov-Fokker-Planck equation.}

\tableofcontents

\section{Introduction}
\numberwithin{equation}{section}

\subsection{Framework}

In this article, we consider the stochastic second order particle systems for $N$ indistinguishable point-particles, subject to a confining external force $- \nabla V$ and an interacting kernel $K$,

\begin{equation}\label{Eq:particle system}
\left \{
\begin{aligned}
\ud x_i(t) &= v_i(t) \ud t, \\      \ud v_i(t) & = - \nabla V(x_i(t)) \ud t + \frac{1}{N} \sum_{j \neq i}K(x_i(t) - x_j(t)) \ud t - \gamma v_i(t) \ud t + \sqrt{2 \sigma } \ud B_t^i,
\end{aligned}
\right.
\end{equation}
where $i = 1,2,...,N$. We take the position $x_i$ in $\Omega$, which may be the whole space $\R^d$ or the periodic torus $\mathbb{T}^d$, while each velocity  $v_i$ lies in $\R^d$. Note that  $\{ (B_\cdot^i ) \}_{i=1}^N$ denotes  $N$ independent copies of Wiener processes on $\R^d$. We take the diffusion coefficient before the Brownian motions $\sqrt{2 \sigma}$ as a constant for simplicity. Usually one can take  $\sigma = \gamma \beta^{-1}$, where the constant $\gamma > 0$ denotes the friction parameter, and $\beta$ is the inverse temperature. In more general models, those $\sigma$  may depend on the number of particles $N$, the position of particles $x_i$, etc. 

%we emphasize that if we select $\sigma = \sigma_N \rightarrow 0$, as $N \rightarrow 0$, we approximate the deterministic system. 

In many important models, the kernel is given  by $K = -  \nabla W$, where $W$ is an interacting potential. A well-known example for $W$ is  the Coulomb potential. We recall that  the Hamiltonian energy $H: (\Omega \times \R^d)^{N} \rightarrow \R$ associated to the particle system  \eqref{Eq:particle system} reads as 
\begin{equation}\label{Eq:Hamiltonian energy}
H(x_1,v_1,...,x_N,v_N) = \frac{1}{2}\sum_{i=1}^{N} v_{i}^2 + \sum_{i=1}^N V(x_i) + \frac{1}{2N} \sum_{i=1}^N \sum_{j \neq i } W(x_i - x_j),
\end{equation}
which is the sum of the kinetic energy $\frac{1}{2} \sum_{i=1}^N v_i^2$ and the potential energy $U(x_1,...,x_N)$ defined by
\begin{equation}\label{Eq:potential}
U(x_1,...,x_N):= \sum_{i=1}^N V(x_i) + \frac{1}{2N} \sum_{i=1}^N \sum_{j \neq i } W(x_i - x_j). 
\end{equation}

We recall the Liouville equation as  in \cite{JFAJW}, which describes the joint distribution $f_N$ of the particle system  \eqref{Eq:particle system} on $(\Omega \times \R^d)^N$, 
\begin{equation}\label{Eq:Liouville equation}
\begin{split}
\partial_t f_N + 
\sum_{i=1}^{N} v_i \cdot \nabla_{x_i} f_N & + \sum_{i=1}^N \Big( - \nabla_{x_i} V(x_i) + \frac{1}{N}\sum_{j \neq i} K(x_i - x_j) \Big) \cdot \nabla_{v_i} f_N \\ & = \sigma \sum_{i=1}^N \Delta_{v_i} f_N + \gamma \sum_{i=1}^{N} \nabla_{v_i} \cdot (v_i f_N).
\end{split}
\end{equation}
We define the associated Liouville operator as
\begin{equation}\label{Liouville operator}
\begin{split}
L_N f_N = & \sum_{i=1}^N v_i \cdot \nabla_{x_i}f_N + \sum_{i=1}^N \Big( -\nabla_{x_i} V(x_i) + \frac{1}{N}\sum_{j \neq i} K(x_i - x_j) \Big) \cdot \nabla_{v_i}f_N \\ & - \sigma \sum_{i=1}^N \Delta_{v_i} f_N - \gamma \sum_{i=1}^{N} \nabla_{v_i} \cdot (v_i f_N).
\end{split}
\end{equation}
As the mean field theory indicates,  when  $N \rightarrow \infty$, the limiting behavior of any  particle in \eqref{Eq:particle system} is described by the McKean-Vlasov SDE 
\begin{equation}\label{Eq:MV equation}
\left \{
\begin{aligned}
\ud  x(t) &= v(t) \ud  t, \\        \ud v(t) & = - \nabla V(x(t)) + K \ast \rho(x(t)) \ud t - \gamma v(t) \ud t + \sqrt{2\sigma} \ud B_t, 
\end{aligned}
\right.
\end{equation}
where $(x,v) \in \Omega \times \R^d$, $(B_\cdot)$ denotes a standard Brownian motion on $\R^d$. We then denote its phase space density by $f_t = \mbox{Law}(x(t), v(t))$  and the spatial density by 
\begin{equation*}
\rho(t,x) = \int_{\R^d} f(t,x,v)  \ud v. 
\end{equation*}
The law  $f_t$ further satisfies the mean-field equation or the nonlinear Fokker-Planck equation
\begin{equation}\label{Eq:limiting equation}
\partial_t f + v \cdot \nabla_x f - \nabla V \cdot \nabla_v f + (K \ast \rho) \cdot \nabla_vf = \sigma \Delta_vf + \gamma \nabla_v \cdot (vf).
\end{equation}
If $\sigma = \gamma \beta^{-1}$, one can check  that its nonlinear equilibrium satisfies the following equation
\begin{equation}\label{Eq:nonlinear equilibrium}
f_{\infty} = \frac{1}{Z} e^{- \beta (V(x) + \frac{1}{2}v^2 + W \ast \rho_{\infty})}, 
\end{equation}
where $Z$ is the constant to make $f_\infty$ a probability density. 
In the literature, one may use the ``formal equilibrium" of Eq.\eqref{Eq:limiting equation} at time $t$ to refer 
\begin{equation}\label{Eq:foraml nonlinear equilibrium}
\hat{f}_t = \frac{1}{\hat Z} e^{- \beta (V(x) + \frac{1}{2}v^2 + W \ast \rho_t)}, 
\end{equation}
again where $ \hat Z$ is the constant to make $\hat f_t$ a probability density.

It is well-known that the large $N$ limit of particle system \eqref{Eq:particle system} is mathematically formalized by the notion of $propagation \ of \  chaos$ originating in \cite{1956Foundations}. See also the surveys for instance  \cite{Alain1991Topics,golse2016dynamics,jabin2014review,2017Mean}.  Recently, many important works have been done in quantifying propagation of chaos for different kinds of first order interacting particle systems with singualr kernels. See for instance  \cite{2017Mean,Chaintron_2022-2} for detailed descriptions  of recent development. However, for 2nd order systems or Newton's dynamcis for interacting particles, the natural question is to consider the mean field limit of a purely deterministic problem, that is $\sigma=0$ as in Eq. \eqref{Eq:particle system} or our setting with the diffusion only acts on the velocity variables.  Thus the limiting equation \eqref{Eq:limiting equation} has the Laplacian term only in its velocity variable. Consequently, we cannot treat more general kernels $K$ and long time propagation of chaos for 2nd order systems by exploiting entropy dissipation as easy as those in the first-order setting for instance as in \cite{InventionJW}.    

The main purposes of this article are two-fold. Firstly, we study the long time convergence of  the solution to the N-particle Liouville equation \eqref{Eq:Liouville equation} toward its unique equilibrium (i.e. the Gibbs measure) uniformly in $N$. Secondly,  we establish propagation of chaos of  \eqref{Eq:particle system} (or \eqref{Eq:Liouville equation} )toward \eqref{Eq:MV equation} (or \eqref{Eq:limiting equation} ) uniformly in time. We combine the entropy method developed in \cite{JFAJW} with the hypocoercivity method refined in \cite{Hypocoercivity} to overcome the degeneracy of diffusion in position direction. Here we deal with the cases where the interaction kernel is globally Lipschitz and the strength of diffusion $\sigma$ is large enough. Uniform-in-time propagation of chaos cannot hold in full generality, one of the most critical issues is that the non-linear equilibrium of \eqref{Eq:limiting equation} might not be unique. Furthermore, for systems  \eqref{Eq:Liouville equation} with singular interactions $K$, for instance the Coulomb interactions, even the mean-field limit/propagation of chaos for general initial data on a fixed finite time horizon, for instance $t \in [0, T]$, is still widely open. The recent progress can be found for instance in \cite{hauray2007n,jabin2015particles,JFAJW,lazarovici2017mean,carrillo2018mean,BJJ,bresch2024duality}.   Uniform-in-$N$ convergence of \eqref{Eq:Liouville equation} toward its equilibrium is seldom investigated for systems even with very mild singularities. Those will be the objects of our further study. 

Let us briefly describe how we combine the relative entropy with hypocoercivity to establish propagation of chaos.  To illustrate the ideas, we pretend  all solutions are classical  and there is no regularity issue when we take derivatives. We first recall the evolution of relative entropy in \cite{JFAJW} for systems with bounded kernel $K$, 
\begin{equation}\label{IEq:evolution of RE1}
\begin{aligned}
\frac{\ud}{ \ud t} H_N(f^t_N|f_t^{\otimes N}) \leq & - \frac{1}{N} \int_{(\Omega \times \R^d)^N} f^t_N \overline{R}_N \ud Z \\ & - \frac{\sigma}{N} \int_{(\Omega \times \R^d)^N} f^t_N \bigg| \nabla_V \log \frac{f^t_N}{f_t^{\otimes N}} \bigg|^2 \ud Z,
\end{aligned}
\end{equation}
where $H_N(f^t_N|f_t^{\otimes N})$ is  the normalized relative entropy defined as 
\begin{equation*}
H_N(f^t_N|f_t^{\otimes N}) = \frac{1}{N} \int_{(\Omega \times \R^d)^N} f^t_N \log \frac{f^t_N}{f_t^{\otimes N}} \ud Z,
\end{equation*}
and $\overline{R}_N$ is the error term defined as
\begin{equation*}
\overline{R}_N = \sum_{i=1}^N \nabla_{v_i} \log f_t(x_i,v_i) \cdot \bigg\{ \frac{1}{N} \sum_{j,j\neq i} K(x_i - x_j) - K \ast \rho_t(x_i) \bigg\}.
\end{equation*}
 Hereinafter we write  that $Z= (x_1, v_1, x_2, v_2, \cdots, x_N, v_N) \in (\Omega \times \R^d)^N$, $X= (x_1, x_2, \cdots, x_N) \in \Omega^N$ and $V= (v_1, v_2, \cdots, v_N) \in (\mathbb{R}^d)^N$.   Then by integration by parts, one can rewrite \eqref{IEq:evolution of RE1} as
\begin{equation}\label{Eq:evolution of RE2}
\begin{aligned}
\frac{\ud}{ \ud t} H_N(f^t_N|f_t^{\otimes N}) = & - \frac{1}{N} \int_{(\Omega \times \R^d)^N} \nabla_V \log \frac{f^t_N}{f_t^{\otimes N}} \cdot R^0_N \ud Z \\ & - \frac{\sigma}{N} \int_{(\Omega \times \R^d)^N} f^t_N \left| \nabla_V \log \frac{f^t_N}{f_t^{\otimes N}} \right|^2 \ud Z,
\end{aligned}
\end{equation}
where $R^0_N$ is a $dN$-dimensional vector defined as 
\begin{equation*}
R^0_N = \bigg\{ \frac{1}{N} \sum_{j=1,j\neq i}^N K(x_i - x_j) - K \ast \rho_t(x_i) \bigg\}_{i=1}^N.
\end{equation*}
Inspired by the strategy used in \cite{Guillin2021UniformIT} to deal with the first order dynamics on torus, we expect to find the term
\begin{equation*}
- \frac{1}{N} \int_{(\Omega \times \R^d)^N} f^t_N \left| \nabla_X \log \frac{f^t_N}{f_t^{\otimes N}} \right|^2 \ud Z
\end{equation*}
in some way to use log-Sobolev inequality for the second order system \eqref{Eq:particle system}. Fortunately, hypocoercivity in entropy sense in \cite{Hypocoercivity} motivates us to take the derivative  of normalized relative Fisher information $I^M_N(f^t_N|f_t^{\otimes N})$, which  is defined as
\begin{equation*}
I_N^M(f^t_N|f_t^{\otimes N}) = \frac{1}{N} \int_{(\Omega \times \R^d)^N} f^t_N \left| \sqrt{M} \nabla \log \frac{f^t_N}{f_t^{\otimes N}} \right|^2 \ud Z,
\end{equation*}
where $M$ is a $2Nd \times 2Nd$ positive defined matrix. If we choose the matrix $M$ as the form
\begin{equation*}
M = 
\left(
\begin{array}{cc}
E  & F \\
F  & G
\end{array}
\right),
\end{equation*}
where $E,F,G$ are $Nd \times Nd$ positive definite matrices to be specified later, then we obtain
\begin{equation}\label{IEq:evolution of RFi}
\begin{aligned}
\frac{\ud}{\ud t} I_N^M(f^t_N|f_t^{\otimes N}) \leq & - \frac{c_1}{N} \int_{(\Omega \times \R^d)^N} f^t_N \left|\sqrt{E} \nabla_X \log \frac{f^t_N}{f_t^{\otimes N}} \right|^2 \ud Z \\ & +  
\frac{c_2}{N} \int_{(\Omega \times \R^d)^N} f^t_N \left| \sqrt{G} \nabla_V \log \frac{f^t_N}{f_t^{\otimes N}} \right|^2 \ud Z \\ & + \mbox{some  error terms},
\end{aligned}
\end{equation}
where $c_1, c_2$ are two positive constants that depend on $M, \sigma, \gamma$, $K$ and $V$. The concrete form of error terms will be given in Lemma \ref{lemma:Keylemma2} and  Lemma \ref{Lemma:CS-error}. Roughly speaking, time derivative  of  relative entropy only provides us dissipation in the direction of velocity and time derivative of relative Fisher information provides us further dissipation in the direction of position. The reference measure of \eqref{IEq:evolution of RE1} and \eqref{IEq:evolution of RFi} could be more general. For insatnce, if we replace $f_t^{\otimes N}$ by $f_{N,\infty}$, we can show the convergence from $f^t_N$ towards $f_{N,\infty}$ as $t \rightarrow \infty$. 

Now we combine these two quantities, relative entropy and Fisher information, to form a new ``modulated energy" as 
\begin{equation*}
\mathcal{E}^M_N(f^t_N|\bar{f_N}) = H_N(f^t_N|\bar{f_N}) + I^M_N(f^t_N|\bar{f_N}),
\end{equation*}
where $\bar{f_N}$ may take $f_t^{\otimes N}$ or $f_{N,\infty}$. The remaining argument is to appropriately control error terms in \eqref{IEq:evolution of RFi}. When $\bar{f_N} = f_{N,\infty}$, linear structure of the Liouville equation \eqref{Eq:Liouville equation} makes error terms vanish. When $\bar{f_N} = f_t^{\otimes N}$, error terms in \eqref{IEq:evolution of RFi} can be written as
\begin{equation} \label{IEq:error terms}
-2 \int_{(\Omega \times \R^d)^N} f^t_N \z \nabla \overline{R}_N, M \nabla \log \frac{f^t_N}{f_t^{\otimes N}}\y \ud Z
\end{equation}
for suitable selection of $M$ (See Corollary \ref{Cor:block matrix}), here $\nabla = (\nabla_X, \nabla_V)$. However, the terms $\nabla_x \nabla_v \log f_t$ and $\nabla_v \nabla_v \log f_t$ are too difficult to control  to obtain some uniform-in-time estimates for \eqref{IEq:error terms}  (See Remark \ref{remark: nabla^2-log-f}). The key observation is that $\nabla_x \nabla_v \log f_t$ and $\nabla_v \nabla_v \log f_t$ are trivial terms if we replace $f_t$ by its non-linear equilibrium, then we can show
\begin{equation}\label{IEq:evolution of E^M_N}
\begin{aligned}
\frac{\ud}{ \ud t} \mathcal{E}^M_N(f^t_N|f_{\infty}^{\otimes N}) \leq & - \frac{c'_1}{N} \int_{(\Omega \times \R^d)^N} f^t_N \left| \nabla_X \log \frac{f^t_N}{f_{\infty}^{\otimes N}} \right|^2 \ud Z \\ & -  
\frac{c'_2}{N} \int_{(\Omega \times \R^d)^N} f^t_N \left| \nabla_V \log \frac{f^t_N}{f_{\infty}^{\otimes N}} \right|^2 \ud Z \\ & + \frac{C}{N}, 
\end{aligned}
\end{equation}
where $c'_1, c'_2 > 0, C > 0$ are some constants that only depend on $M, \sigma, \gamma, K$ and $V$ (See Theorem \ref{Thm:theorem1.4}). To control the error between $\mathcal{E}^M_N(f^t_N|f_t^{\otimes N})$ and $\mathcal{E}^M_N(f^t_N|f_{\infty}^{\otimes N})$, we use the convergence result from $f_t$ to its equilibrium $f_{\infty}$. There exist some results studying this type of convergence . In this article,  we adapt the arguments as  in \cite{UPIandULSI} for $K$ with small $\| \nabla K \|_{\infty}$ and \cite{chen2023uniform} for the case when $K = - \nabla W$ and the  interaction energy is convex to establish the following estimate
\begin{equation}\label{IEq:convergence of limiting equation}
H(f_t|f_{\infty}) \leq Ce^{-c'_3t}, 
\end{equation}
where $c'_3 > 0$ depends on $\sigma, \gamma, K$ and $C > 0$ depends on initial data $f_0$.
We give the sketch of proof of \eqref{IEq:convergence of limiting equation} in Theorem \ref{Thm: theorem-1.1} and Theorem \ref{Thm:theorem-1.2} in the appendix. In the end, we combine \eqref{IEq:evolution of E^M_N} with \eqref{IEq:convergence of limiting equation} through 2-Wasserstein distance to conclude.    

Finally, we give some comments about terms $\nabla_x \nabla_v \log f_t$ and $\nabla_v \nabla_v \log f_t$. Unfortunately we cannot directly obtain  uniform-in-time estimates about these terms, otherwise we would  have 
\begin{equation*}
\begin{aligned}
\frac{\ud}{\ud t} \mathcal{E}^M_N(f^t_N|f_t^{\otimes N}) \leq & - \frac{c'_1}{N} \int_{(\Omega \times \R^d)^N} f^t_N \left| \nabla_X \log \frac{f^t_N}{f_t^{\otimes N}} \right|^2  \ud Z \\ & -  
\frac{c'_2}{N} \int_{(\Omega \times \R^d)^N} f^t_N \left| \nabla_V \log \frac{f^t_N}{f_t^{\otimes N}} \right|^2 \ud Z \\ & + \frac{C}{N}, 
\end{aligned}
\end{equation*}
for some $c_1', c_2', C > 0$ and then finish the proof by Gronwall's  inequality. Uniform-in-time propagation of chaos can only be expected when the limiting PDE \eqref{Eq:limiting equation} has a unique equilibrium. Indeed, we obtain Theorem \ref{Thm:theorem1.5} under the assumptions in  Theorem \ref{Thm:theorem1.4} and some assumption on $W$ which prevents the existence of multiple equilibria of Eq.\eqref{Eq:limiting equation}. Similar arguments/estimates have also appeared in the  first-order setting. For instance, the authors of  \cite{feng2023quantitative}  obtained Li-Yau type growth estimates of $\nabla^2 \log \rho_t$ with respect to the position $x$ for any fixed time horizon in particular in the case of 2d Navier-Stokes equation, with a universal constant growing with time $t$. Those type estimates are very useful when one treats the mean field limit problem set on the whole space \cite{rosenzweig2024relative,carrillo2024relative}.

\subsection{Main results and examples}\label{Main results and examples}
Let us fix some notations first. We denote $z = (x,v) \in \Omega \times \R^d$ as a phase space configuration of one particle, and $X = (x_1,...,x_N)$, $V = (v_1,...,v_N)$ and $Z= (X, V)$ for configurations in position, velocity  and phase space of $N$ particles, respectively. We also use that $z_i = (x_i,v_i)$ and $Z = (X,V) = (z_1,...,z_N)$. As the same spirit, we write  operator $\nabla_X = (\nabla_{x_1},..., \nabla_{x_N})$, $\nabla_V = (\nabla_{v_1},..., \nabla_{v_N})$ and $\Delta_V = \sum_{i=1}^{N} \Delta_{v_i}$. Those are operators on $\mathbb{R}^{dN}$. 

For two probability measures $\mu, \nu$ on $\Omega \times \R^d$,  we denote by $\Pi(\mu,\nu)$ theset of all  couplings between $\mu$ and $\nu$. We define $L^2$ Wasserstein distance as 
\begin{equation*}
\mathcal{W}_2(\mu,\nu) =  \Big( \inf_{\pi \in \Pi(\mu,\nu)} \int_{(\Omega \times \R^d)^2} |z_1 - z_2|^2 \ud \pi(z_1,z_2) \Big)^{1/2}. 
\end{equation*}
For a measure $\mu$ and a positive integrable function $f$ such that $\int_{\Omega \times \R^d} f|\log f| \ud \mu < \infty$, we define the  entropy of $f$ with respect to  $\mu$ as
\begin{equation*}
Ent_{\mu}(f) = \int_{\Omega \times \R^d} f \log f \ud \mu - \bigg( \int_{\Omega \times \R^d} f \ud \mu \bigg) \log \left( \int_{\Omega \times \R^d} f  \ud \mu \right). 
\end{equation*}
For two probability measures $\mu$ and $\nu$, we define the relative entropy between $\mu$ and $\nu$ as 
\begin{equation*}
H(\mu|\nu) = \left\{ 
\begin{aligned}
& \int_{\Omega \times \R^d} h \log h  \ud \nu,  & \mbox{ if \, } \mu << \nu  \mbox{ and }  h = \frac{\ud \mu }{\ud \nu }, \\ 
& + \infty,  &  \mbox{otherwise}, 
\end{aligned}
\right.
\end{equation*}
and the normalized version of relative entropy for probability measures $\mu' $ and $\nu'$ on $(\Omega \times \R^d)^N$ as 
\begin{equation}\label{Quantity:N-RE} 
H_N(\mu'|\nu') = \frac{1}{N}H(\mu'|\nu').
\end{equation}
We also define relative Fisher information between $\mu$ and $\nu$ as
\begin{equation*}
I^M(\mu|\nu) = \left\{
\begin{aligned}
&\int_{\Omega \times \R^d} \frac{\z M \nabla h, \nabla h\y}{h} \ud \nu, & \mbox{ if \, } \mu << \nu  \mbox{ and }  h = \frac{\ud \mu }{\ud \nu }, \\
&+ \infty & \ \ otherwise,
\end{aligned}
\right.
\end{equation*}
where $M(z)$ is a positive definite matrix-valued function such that $M(z) \geq \kappa I$ for some $\kappa > 0$ independent on $z \in \Omega \times \R^d$. Similarly, the normalized version of relative Fisher information between two probability measures $\mu' $ and $\nu'$ on $(\Omega \times \R^d)^N$ reads as
\begin{equation}\label{Quantity:N-RFI}
I^M_N(\mu'|\nu') = \frac{1}{N} I^M(\mu'|\nu').
\end{equation}
We abuse the notation of probability measures $f_N$ and $f$ as probability densities if they have densities. Finally, we give definitions of some functional inequalities that we will use in the following.

\begin{definition}(log-Sobolev inequality) We call that  the probability measure 
$\mu$ on $\Omega \times \R^d$ satisfies the log-Sobolev inequality  if there exists some constant $\rho_{ls}(\mu) > 0$ such that  for all smooth function $g$ with $\int g^2 \ud \mu = 1$, it holds that 
\begin{equation}
Ent_{\mu}(g^2) \leq \rho_{ls}(\mu) \int_{\Omega \times \R^d} (|\nabla_x g|^2 + |\nabla_v g|^2) \ud \mu.
\end{equation}
\end{definition}

\begin{definition}(Weighted log-Sobolev inequality) Let $H(z)$ be a smooth function on $\R^d \times \R^d$. We call that  the probability measure $\mu$ on $\R^d \times \R^d$ satisfies the weighted log-Sobolev inequality with weight $H$ if there exists some constant $\theta > 0, \rho_{wls}(\mu) > 0$  such that for all smooth function $g$ with $\int g^2 \ud \mu = 1$, it holds that 
\begin{equation}\label{}
Ent_{\mu}(g^2) \leq \rho_{wls}(\mu) \int_{\R^d \times \R^d}  (H^{-2\theta}|\nabla_x g|^2 + |\nabla_v g|^2)  \ud \mu.
\end{equation} 
\end{definition}
In the following, we use $\rho_{ls}$ ($\rho_{wls}$) to denote the (weighted) log-Sobolev inequality constant of the equilibrium  measure  $f_{\infty}$ of the limiting PDE \eqref{Eq:limiting equation} and  $\rho_{LS}$ to denote the uniform-in-$N$ log-Sobolev inequality constant of the stationary measure  $f_{N,\infty}$ of the Liouville equation \eqref{Eq:Liouville equation}.

Now we detail assumptions about interaction potential $W$ and confining potential $V$.

\begin{assumption}\label{Assumption:V}
Suppose that  $V(x) \in C^2(\Omega)$ and there exist  $\lambda > 0, M > 0$ such that $V(x) \geq \lambda |x|^2 - M$.
\end{assumption}
The first condition means that $V$ goes to infinity at infinity and is bounded below. It can be implied by
\begin{equation*}
\frac{1}{2} \nabla V (x) \cdot x \geq 6\lambda (V(x) + \frac{x^2}{2}) - A, \ \ x \in \Omega
\end{equation*}
for some $A > 0$. This expression implies that the force $-\nabla V$ drags particles back to some compact set. Detailed proof can be found in \cite{coupling2}.

The second assumption implies  that  the potential $V$ grows at most quadratically  on $\Omega$. 

\begin{assumption}\label{Assumption:V1}
Suppose that $V(x) \in C^2(\Omega)$ and there exists $C_V > 0$ such that $\| \nabla^2 V \|_{L^{\infty}} \leq C_V < \infty$.
\end{assumption}

We also treat more general confining potentials when $\Omega = \R^d$.

\begin{assumption}\label{Assumption:V2}
Suppose that $V(x) \in C^2(\R^d)$ and there exist $\theta > 0$ and $C_V^{\theta}$ such that $\|V^{-2\theta} \nabla^2 V\|_{L^{\infty}} \leq C_V^{\theta} < \infty$. Moreover, outside a compact domain on $\R^d$, we assume that $V$ satisfies  

$(i)$ $\Delta V \leq \kappa_1 |\nabla V|^2$ for some $\kappa_1 \in (0,1)$; 

$(ii)$ $|\nabla V|^2 \geq \kappa_2 V^{2\theta + 1}$ for some positive constant $\kappa_2 > 0$.
\end{assumption}

Those conditions in {\bf Assumption 3}  have been explored in \cite{EntropyMutiplier} for kinetic Langevin process with confining potentials greater than quadratic growth at infinity. The boundedness of  $\|V^{-2\theta} \nabla^2 V\|_{L^{\infty}}$ extend the quadratic growth condition and the other two conditions guarantee the weighted log-Sobolev inequality of $f_{\infty}$ (See Section \ref{section:WLSI}). We also use multiplier method developed in \cite{EntropyMutiplier} to deal with this type of confining potentials.

\begin{example}
Some important examples have been provided in \cite{EntropyMutiplier}. The first kind of examples is $V(x) = |x|^k, k \geq 2$. Then we have $\Delta_x V = (dk + k^2 - 2k)|x|^{k-2}$, $|\nabla_x V|^2 = k^2|x|^{2k-2}$ and $\|V^{-2\theta} \nabla^2 V \|_{L^{\infty}} \sim |x|^{k-2k\theta - 2}$. Finally,  we take $\theta = \frac{1}{2} - \frac{1}{k}$. Then all conditions above can be satisfied. 
\end{example}

\begin{example}
Another kind of examples is $V(x) = e^{a|x|^k}$ provided in \cite{EntropyMutiplier}, which shows that the limit growth of $V$ must be below the exponential growth. Observing that $\Delta_x V \sim a k^2|x|^{2(k-1)}e^{a|x|^k}$, $|\nabla_x V|^2 = a^2k^2e^{2a|x|^k}$ and $\|V^{-2\theta} \nabla^2 V \|_{L^{\infty}} \sim e^{a(1-2\theta)|x|^k}$, the conditions above imply $k < 1$ and $\theta = \frac{1}{2}$.
\end{example}

\begin{assumption}\label{Assumption:W1}
Suppose that $W(x) \in C^2(\Omega)$ and there exists $0 < C_K < \frac{1}{2} \lambda$ such that $\| \nabla^2 W\|_{L^\infty} \leq C_K < \infty$. 
\end{assumption}

\begin{assumption}\label{Assumption:W2}
The mean field functional or the interaction energy  $F: \mathcal{P}_2(\R^{d}) \rightarrow \R$ defined as
\begin{equation*}
F(\rho) = \int_{\Omega \times \Omega} W(x-y) \ud \rho(x) \ud \rho(y)
\end{equation*}
is functional convex, i.e. for every $t \in [0,1]$ and every $\nu_1, \nu_2 \in \mathcal{P}_2(\R^d)$,
\begin{equation}\label{Condition: convexity}
F((1-t)\nu_1 + t\nu_2) \leq (1-t)F(\nu_1) + t F(\nu_2).
\end{equation}
\end{assumption}

\begin{example}
The harmonic interaction potential 
\[ W(x) = \frac{L_W}{2}|x|^2 \] 
with $L_W \leq \frac{\lambda}{2}$ is covered by Assumption \ref{Assumption:W1}. The mollified Coulomb potential when $d=3$,
\begin{equation*}
W(x) = \frac{a}{(|x|^k + b^k)^{\frac{1}{k}}} \ \ \ or \ \ W(x) = \arctan(|x|/r_0) \frac{1}{|x|}
\end{equation*}
with some constant $a, b>0$ or $r_0 > 0$. The later form satisfies Assumption \ref{Assumption:W2} (See Section 3 in \cite{Chaintron_2022-1}).
\end{example}

\begin{remark}
The harmonic interaction potential does not satisfy Assumption \ref{Assumption:W2}.
Let us take 
\[ F(\rho) = \int_{\Omega \times \Omega} (x-y)^2 \ud \rho(x) \ud \rho(y), \]
then for $t \in [0,1]$,
\begin{equation*}
\begin{aligned}
& F((1-t)\nu_1 + t\nu_2) \leq (1-t)F(\nu_1) + t F(\nu_2) \\ \Longleftrightarrow & \int_{\Omega \times \Omega} (x-y)^2 (\ud \nu_1 - \ud \nu_2)^{\otimes 2}(x,y) \geq 0 \\ \Longleftrightarrow & 2 \big(\int_{\Omega} x^2 \ud \nu_1 \big) \big(\int_{\Omega} x^2 \ud \nu_2 \big) \geq \big(\int_{\Omega} x^2 \ud \nu_1 \big)^2 + \big(\int_{\Omega} x^2 \ud \nu_2 \big)^2,
\end{aligned}
\end{equation*}
which does not hold in general.
\end{remark}

With those specific statement of assumptions as above, we can now state our main results.  
\begin{theorem}\label{Thm:theorem1.3}
Suppose that $V$ satisfies Assumption \ref{Assumption:V} and \ref{Assumption:V1}, and $W$ satisfies Assumption \ref{Assumption:W1} with $C_K < 1$. Then for initial data $f_N^0$ of the Liouville equation \eqref{Eq:Liouville equation} 
such that  $\mathcal{E}^M_N(f^0_N|f_{N,\infty}) < \infty$, we have
\begin{equation}
\mathcal{E}^M_N(f^t_N|f_{N,\infty}) \leq e^{-ct} \mathcal{E}^M_N(f^0_N|f_{N,\infty}),
\end{equation}
where $c = c(f_{N,\infty}, M) > 0$ is explicit and independent of $N$.
\end{theorem}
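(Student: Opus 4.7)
The strategy is to combine the entropy dissipation and Fisher-information dissipation against the invariant measure $f_{N,\infty}$ into a single Gronwall estimate for the modulated energy $\mathcal{E}^M_N = H_N + I^M_N$. The essential simplification over the $f_t^{\otimes N}$ case discussed around \eqref{IEq:error terms} is that $f_{N,\infty}$ is \emph{exactly} invariant under the full $N$-particle dynamics \eqref{Eq:Liouville equation}, so every mean-field error term drops out and no nonlinear coupling survives.

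First I compute $\frac{\ud}{\ud t} H_N(f^t_N|f_{N,\infty})$. Because $f_{N,\infty}$ is stationary for $L_N$ and the diffusion in \eqref{Eq:Liouville equation} acts only on velocities, the standard De Bruijn-type identity yields the degenerate dissipation
\begin{equation*}
\frac{\ud}{\ud t} H_N(f^t_N|f_{N,\infty}) = -\frac{\sigma}{N}\int f^t_N\Bigl|\nabla_V\log\frac{f^t_N}{f_{N,\infty}}\Bigr|^2 \ud Z,
\end{equation*}
the Hamiltonian transport and friction terms being killed by integration by parts against the invariant measure. Second, I differentiate $I^M_N(f^t_N|f_{N,\infty})$ via the hypocoercivity-in-Fisher-information scheme of \cite{Hypocoercivity}, adapted as described in Lemma \ref{lemma:Keylemma2} and Corollary \ref{Cor:block matrix}. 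With the block choice
\[ M = \begin{pmatrix} E & F \\ F & G \end{pmatrix}, \]
the commutator $[v\cdot\nabla_x,\nabla_v]$ manufactures a strict gain in the $\nabla_X$ direction, and the Hessian contributions $\nabla^2 V$ and $\nabla^2 W$ (bounded uniformly by Assumptions \ref{Assumption:V1} and \ref{Assumption:W1}) are absorbed into the dissipation -- this is where the restriction $C_K<1$ is consumed. Because the reference measure is the true $N$-particle invariant measure rather than $f_t^{\otimes N}$, neither the interaction error \eqref{IEq:error terms} nor the problematic second derivatives $\nabla_x\nabla_v\log f_t,\,\nabla_v\nabla_v\log f_t$ flagged in Remark \ref{remark: nabla^2-log-f} appear. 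The calculation gives
\[ \frac{\ud}{\ud t}I^M_N(f^t_N|f_{N,\infty}) \leq -\frac{c_1}{N}\int f^t_N\Bigl|\nabla_X\log\frac{f^t_N}{f_{N,\infty}}\Bigr|^2 \ud Z -\frac{c_2}{N}\int f^t_N\Bigl|\nabla_V\log\frac{f^t_N}{f_{N,\infty}}\Bigr|^2 \ud Z \]
for explicit $c_1,c_2>0$ depending only on $M,\sigma,\gamma,V,W$.

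Adding the two estimates gives a dissipation of $\mathcal{E}^M_N$ by the full $\nabla=(\nabla_X,\nabla_V)$ Fisher information. To convert this into an exponential decay estimate for $\mathcal{E}^M_N$ itself, I invoke the uniform-in-$N$ log-Sobolev inequality for $f_{N,\infty}$, whose constant $\rho_{LS}$ is independent of $N$ under the present assumptions; this controls $H_N$ by the Fisher information, and since $I^M_N\le \|M\|_{\mathrm{op}}$ times the same Fisher information, both pieces of $\mathcal{E}^M_N$ are dominated by the right-hand side of the dissipation estimate. Gronwall's inequality then yields $\mathcal{E}^M_N(f^t_N|f_{N,\infty})\leq e^{-ct}\mathcal{E}^M_N(f^0_N|f_{N,\infty})$ with $c=c(f_{N,\infty},M)>0$ explicit and $N$-independent.

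The main obstacle is the hypocoercivity computation of step two: the block parameters $(E,F,G)$ must be tuned compatibly with $\sigma,\gamma$ and the Hessian bounds on $V,W$ so that the quadratic form produced by $\frac{\ud}{\ud t} I^M_N$ is genuinely negative definite in both $X$ and $V$ directions. This is the algebraic heart of Corollary \ref{Cor:block matrix} and is the place where the extra restriction $C_K<1$ -- beyond $C_K<\lambda/2$ in Assumption \ref{Assumption:W1} -- is actually consumed.
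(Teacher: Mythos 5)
Your proposal follows essentially the same two-step route as the paper: (i) compute $\frac{\ud}{\ud t}H_N(f_N|f_{N,\infty})$ and $\frac{\ud}{\ud t}I^M_N(f_N|f_{N,\infty})$ via Lemma~\ref{lemma:error term of RE} and Corollary~\ref{Cor:block matrix}, note that because $f_{N,\infty}$ is exactly stationary the error terms $\overline{R}_N$ and the problematic off-diagonal Hessians $\nabla_x\nabla_v\log f_{N,\infty}$ vanish (and $\nabla_v\nabla_v\log f_{N,\infty}$ is a multiple of the identity), then tune the constant block matrix $M$ so that the remaining quadratic form is negative definite in both $\nabla_X$ and $\nabla_V$; (ii) close via the uniform-in-$N$ log-Sobolev inequality for $f_{N,\infty}$ and Gronwall. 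That is precisely what the paper does.

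The one place you go wrong is the attribution of the hypothesis $C_K<1$. You assert twice that $C_K<1$ is "consumed" in the hypocoercivity step when absorbing the Hessians $\nabla^2V,\nabla^2W$ into the dissipation. That is not so: the paper's Step~1 absorbs those Hessians under only $a\le\frac{2\gamma}{C_K+C_V}$ and $\delta\le\frac{\sigma}{2(4+8a\gamma)^2}$, which merely requires the Hessians to be uniformly bounded, not small. The extra restriction $C_K<1$ is needed entirely in Step~2: it is the hypothesis under which \cite{UPIandULSI} establishes the uniform-in-$N$ log-Sobolev inequality for $f_{N,\infty}$, and without that inequality the Fisher-information dissipation cannot be closed into an exponential decay for the entropy. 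This matters because it identifies the genuine bottleneck for relaxing the smallness of $C_K$ (it is a functional-inequality question about $f_{N,\infty}$, not an algebraic one about the modified Fisher matrix), and your misdiagnosis would send you looking in the wrong place. The rest of your argument is sound.
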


\begin{remark} We can choose matrix $M$ as the following
\begin{equation*}
M = 
\left(
\begin{array}{cc}
E  & F \\
F  & G
\end{array}
\right),
\end{equation*}
where
\begin{equation*}\label{remark:Thm1.4-M}
E = \text{diag}\{\delta a^3,...,\delta a^3\}, \ \ \ F = \text{diag}\{\delta a^2,...,\delta a^2\}, \ \ \ G = \text{diag}\{2\delta a,...,2\delta a\},
\end{equation*}
and two constants $a$ and $\delta$ satisfy 
\begin{equation*}
a \leq \frac{2\gamma}{C_K + C_V}, \ \ \ \delta \leq \frac{\sigma}{2(4 + 8a \gamma)^2}. 
\end{equation*}
Then the constant $c$ can be taken as $c = \frac{1}{2(1 + \rho_{LS})} \min\{ \frac{3}{2} \delta a^2, \frac{\sigma}{2}\}$. By the selection of $\delta$, we observe that $c \sim \sigma$, i.e. the larger diffusion strength we have, the faster convergence from $f_N^t$ to $f_{N,\infty}$. 
\end{remark}

Our second contribution is the uniform-in-$N$ exponential convergence from $f_N$ to $f_{\infty}^{\otimes N}$.
\begin{theorem}\label{Thm:theorem1.4}
Suppose that  $V$ and $W$ satisfy one of the following two cases,

(i) $V$ satisfies Assumption \ref{Assumption:V} and \ref{Assumption:V1}, $W$ satisfies Assumption \ref{Assumption:W1};

(ii) $V$ satisfies Assumption \ref{Assumption:V} and \ref{Assumption:V2}, $W$ satisfies Assumption \ref{Assumption:W1} and $\|\nabla W \|_{L^{\infty}} < \infty$.

For the first case, we take $M_1$ as a constant matrix, then for initial data $f_N^0$ of Eq.\eqref{Eq:Liouville equation} such that $\mathcal{E}_N^{M_1}(f^0_N|f^{\otimes N}_{\infty}) < \infty$ and $\sigma \geq \sigma^{\ast}_1 > 0$, we have
\begin{equation}\label{IEq:Theorem 1.4-1}
H_N(f^t_N|f_{\infty}^{\otimes N}) \leq \mathcal{E}_N^{M_1}(f^t_N|f^{\otimes N}_{\infty}) \leq e^{-c_1t} \mathcal{E}_N^{M_1}(f^0_N|f^{\otimes N}_{\infty}) + \frac{C_1}{N},
\end{equation}
where $c_1 = c_1(f_{\infty}, M_1) > 0$ and $C_1 = C_1(f_{\infty}, \sigma, \gamma, C_K, C_V) > 0$ are explicit and independent of $N$. 

For the second case, we take $M_2$ as a matrix function on $\R^{2d}$, then for initial data $f_N^0$ of Eq.\eqref{Eq:Liouville equation} such that $\mathcal{E}_N^{M_2}(f^0_N|f^{\otimes N}_{\infty}) < \infty$ and $\sigma \geq \sigma^{\ast}_2 > 0$, we have
\begin{equation}\label{IEq:Theorem 1.4-2}
H_N(f^t_N|f_{\infty}^{\otimes N}) \leq \mathcal{E}_N^{M_2}(f^t_N|f^{\otimes N}_{\infty}) \leq e^{-c_2t} \mathcal{E}_N^{M_2}(f^0_N|f^{\otimes N}_{\infty}) + \frac{C_2}{N},
\end{equation}
where $c_2 = c_2(f_{\infty}, M_2), C_2 = C_2(f_{\infty}, \sigma, \gamma, C_K, C^{\theta}_V) > 0$ are explicit and independent of $N$.

\end{theorem}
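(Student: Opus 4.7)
The plan is to track the modulated energy
\begin{equation*}
\mathcal{E}_N^M(f_N^t\,|\,f_{\infty}^{\otimes N}) \;=\; H_N(f_N^t\,|\,f_{\infty}^{\otimes N}) + I_N^M(f_N^t\,|\,f_{\infty}^{\otimes N})
\end{equation*}
along the Liouville flow \eqref{Eq:Liouville equation}, closing a Gronwall argument with the (weighted) log-Sobolev inequality of $f_\infty^{\otimes N}$. The crux is that using the tensorized \emph{nonlinear} equilibrium $f_\infty^{\otimes N}$ as reference (rather than $f_t^{\otimes N}$) makes the explicit form
\begin{equation*}
\log f_\infty(x,v) = -\beta\bigl(V(x) + \tfrac{1}{2}v^2 + W*\rho_\infty(x)\bigr) - \log Z
\end{equation*}
available, so that $\nabla_v\nabla_v\log f_\infty = -\beta I$ is constant and $\nabla_x\nabla_v\log f_\infty = 0$. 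This kills precisely the Hessian obstructions discussed in Remark \ref{remark: nabla^2-log-f} and expression \eqref{IEq:error terms}.

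First, I would redo the computation of \eqref{Eq:evolution of RE2} with the new reference. Since $f_\infty^{\otimes N}$ is \emph{not} invariant under $L_N$, this yields the velocity-Fisher dissipation $-\sigma\mathcal{D}_V$, where $\mathcal{D}_V := \tfrac{1}{N}\int f_N^t|\nabla_V\log(f_N^t/f_\infty^{\otimes N})|^2\,\ud Z$, plus an error term of the same form as $\overline R_N$ but with $f_t$ replaced by $f_\infty$. Because $\nabla_{v_i}\log f_\infty = -\beta v_i$, the error is $-\beta\sum_i v_i\cdot\bigl(\tfrac1N\sum_{j\neq i}K(x_i-x_j) - K*\rho_\infty(x_i)\bigr)$, whose $L^2(f_N^t)$ size is $O(1/\sqrt{N})$ by a law-of-large-numbers variance computation (using Lipschitz $K$ and a uniform-in-$t$ $v$-moment bound coming from Assumption \ref{Assumption:V}); Cauchy–Schwarz then splits it into $\varepsilon\mathcal{D}_V + C/N$. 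In parallel, I compute $\tfrac{d}{dt}I_N^M(f_N^t|f_\infty^{\otimes N})$ by adapting the hypocoercive identity of \cite{Hypocoercivity} to the interacting system, using exactly the block matrix
\begin{equation*}
M = \begin{pmatrix} E & F \\ F & G \end{pmatrix},\qquad E = \delta a^3 I,\ F = \delta a^2 I,\ G = 2\delta a I
\end{equation*}
from the remark after Theorem \ref{Thm:theorem1.3}. The linear kinetic Fokker–Planck contribution produces $-c_1'\mathcal{D}_X - c_2'\mathcal{D}_V$ (here $\mathcal{D}_X$ is the analogue of $\mathcal{D}_V$ with $\nabla_X$); the mean-field interaction produces quadratic cross terms controlled by $(C_K+C_V)(\mathcal{D}_X+\mathcal{D}_V)$ plus another $O(1/N)$ chaos residual, as formalized in Lemma \ref{lemma:Keylemma2} and Lemma \ref{Lemma:CS-error}.

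Summing the two evolutions and taking $\sigma \ge \sigma_1^*$ large enough that the hypocoercive dissipation dominates the $(C_K+C_V)$ perturbation yields $\tfrac{d}{dt}\mathcal{E}_N^M \le -c_1'(\mathcal{D}_X+\mathcal{D}_V) + C_1/N$. Under the Case (i) assumptions the measure $f_\infty$ satisfies a log-Sobolev inequality with constant $\rho_{ls}$ (Gaussian velocity marginal times a bounded perturbation of a strongly log-concave position marginal), which by tensorization gives the same constant for $f_\infty^{\otimes N}$ uniformly in $N$; combined with $I_N^M \le \|M\|(\mathcal{D}_X+\mathcal{D}_V)$, this gives $\mathcal{E}_N^M \le C(\mathcal{D}_X+\mathcal{D}_V)$, hence $\tfrac{d}{dt}\mathcal{E}_N^M \le -c_1\mathcal{E}_N^M + C_1/N$, and Gronwall delivers \eqref{IEq:Theorem 1.4-1}. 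For Case (ii), $V$ is super-quadratic so only a \emph{weighted} log-Sobolev inequality is available (to be set up in Section \ref{section:WLSI}); I would replace $M_1$ by a matrix function $M_2(x,v)$ whose $X$-block carries a weight proportional to $V^{-2\theta}(x)$, in the spirit of the entropy-multiplier method of \cite{EntropyMutiplier}. The hypothesis $\|\nabla W\|_\infty < \infty$ enters precisely to control the commutator generated when $\nabla_x V^{-2\theta}$ meets the nonlocal interaction $K*\rho$, after which the same closure produces \eqref{IEq:Theorem 1.4-2}.

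The main obstacle I anticipate is the bookkeeping of the Fisher information derivative for the interacting system: each pairwise term $\nabla K(x_i - x_j)$ contributes only a bounded quadratic form, but there are $\sim N^2$ of them, and one must exploit the symmetry of $M$ together with the $\tfrac1N$ normalization in \eqref{Eq:particle system} to show that the total is of order $N$ (matching the normalized Fisher information) rather than order $N^2$. A secondary delicate point in Case (ii) is that weighted log-Sobolev inequalities do not tensorize as cleanly as unweighted ones, so the structure of $M_2$ and the multiplier argument must be tuned to deliver a rate that is genuinely $N$-independent.
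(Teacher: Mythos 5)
Your overall architecture matches the paper's: work with the modulated energy $\mathcal{E}_N^M(f_N^t\,|\,f_\infty^{\otimes N}) = H_N + I_N^M$, pick $f_\infty^{\otimes N}$ as reference so that $\nabla_x\nabla_v\log f_\infty = 0$ and $\nabla_v\nabla_v\log f_\infty$ is constant, derive hypocoercive dissipation in both directions via a block matrix $M$, and close with a (weighted) log-Sobolev inequality for the tensorized nonlinear equilibrium. For Case (ii) your instinct to use a $V$-dependent weight in $M$ (à la \cite{EntropyMutiplier}) and to worry about tensorization of the weighted log-Sobolev inequality is also on the right track, although note that the paper's weight is $H(z)^{-s\theta}$ with $H(z)=\tfrac{v^2}{2}+V(x)+H_0$, so it depends on $v$ as well as $x$, and the role of $\|\nabla W\|_\infty$ is to bound $L_N^*(H^{-s\theta})$ along the characteristic flow rather than a commutator with $\nabla_x V^{-2\theta}$.

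The genuine gap is in your treatment of the residuals $R_N^i$. You claim their $L^2(f_N^t)$ norm ``is $O(1/\sqrt N)$ by a law-of-large-numbers variance computation (using Lipschitz $K$ and a uniform-in-$t$ $v$-moment bound).'' This step fails as stated: $f_N^t$ is not a product measure, so a variance/LLN argument only bounds $\int f_\infty^{\otimes N}|R_N^i|^2$, not $\int f_N^t |R_N^i|^2$. What is actually needed is the Jabin–Wang change-of-measure (Gibbs / Donsker–Varadhan) inequality
\begin{equation*}
\frac{1}{N}\int f_N^t |R_N^i|^2 \,\ud Z \;\le\; \frac{1}{\nu}\,H_N(f_N^t\,|\,f_\infty^{\otimes N}) + \frac{1}{N}\log \int f_\infty^{\otimes N} e^{\nu |R_N^i|^2}\,\ud Z,
\end{equation*}
together with a uniform bound on the exponential moment of $|R_N^i|^2$ under the \emph{product} measure. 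This produces an extra term $\propto H_N(f_N^t|f_\infty^{\otimes N})$ that you do not anticipate, and absorbing it forces the condition $\sigma\ge\sigma^*$ so that the dissipation dominates this relative-entropy surcharge as well as the log-Sobolev constant. Furthermore, the residual $R_N^3$ (arising from $\nabla_{x_i}\overline{R}_N$ because $\nabla_{v_j}\log f_\infty = -\beta v_j$) is built from a kernel that grows linearly in the velocity variable, so the standard Jabin–Wang exponential-moment estimate for bounded kernels does not apply directly; the paper proves a new large-deviation estimate (Proposition \ref{proposition:LDE2}) precisely to cover kernels $\psi$ with $|\psi(m,n)|\le C(1+|n|)$, exploiting the Gaussian velocity tails of $f_\infty$. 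Without this extension your argument cannot close the Gronwall inequality.
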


\begin{remark}
For the first case, we also choose $M_1$ as in  Remark \ref{remark:Thm1.4-M}, but two constants $a$ and $\delta$ now should satisfy
\begin{equation*}
\left \{
\begin{aligned}
& a \leq \min \Big \{ \frac{2 \gamma}{C_K + C_V}, \frac{1}{4C_K + 2},  \frac{\gamma}{5120 e\rho_{ls} (C_K+1)^2} \Big \}, \\
& \delta \leq \frac{\sigma}{4[8 + a + 28a\gamma + 32a^2 \gamma^2]},
\end{aligned}
\right.
\end{equation*}
then the constant $c_1$ can be taken as $c_1 = \frac{\delta a^2}{16(\rho_{ls} + 1)}$ which implies $c_1 \sim \sigma$, and the lower bound of diffusion constant $\sigma^{\ast}_1$ satisfies 
\begin{equation*}
\sigma_1^{\ast} \geq \max \bigg\{ \frac{160[10 + 28 \gamma + 32 \gamma^2]\rho_{ls}e}{a^2 \gamma}, 3200\rho_{ls}e\gamma \bigg\}C_K^2.
\end{equation*}
For the second case, we choose $M_2$ as
\begin{equation*}
\left \{
\begin{aligned}
& E = \text{diag}\{e(z_1)Id_{d \times d},...,e(z_N)Id_{d \times d}\}, \\ & F = \text{diag}\{f(z_1)Id_{d \times d},...,f(z_N)Id_{d \times d}\}, \\ & G = \text{diag}\{g(z_1)Id_{d \times d},...,g(z_N)Id_{d \times d}\}, 
\end{aligned}
\right.
\end{equation*}
where $E,F,G$ are $Nd \times Nd$ diagonal matrices. We choose $e(z), f(z)$ and $g(z)$ as
\begin{equation*}
\begin{split}
e(z) = \delta a^3(H(z))^{-3\theta},\ \ 
b(z) = \delta a^2(H(z))^{-2\theta},\ \  c(z) = 2 \delta a(H(z))^{-\theta},
\end{split}
\end{equation*}
where 
\begin{equation*}
H(z) = \frac{v^2}{2} + V(x) + H_0, \ \ \ H_0 > 0,
\end{equation*}
and two constants $a$ and $\delta$ satisfy
\begin{equation*}
\left\{
\begin{aligned}
& a \leq \min \Big \{ \frac{1}{4C_K + 6\theta + 2}, \frac{\gamma}{C_V^{\theta} + C_K}, \frac{\gamma}{6400 e\rho_{wls} (C_K+1)^2} \Big \}, \\
& \delta \leq \frac{3 \sigma}{8 + 32C_K + m_2'},
\end{aligned}
\right.
\end{equation*}
where $m_2' = [4 + 6\gamma a + 4a\theta(2\gamma + \|\nabla W\|_{L^{\infty}})]^2 + a[6\gamma + \theta(2\gamma + \| \nabla W\|_{L^{\infty}})]$, then the constant $c_2$ can be taken as $c_2 = \frac{\delta a^2}{16(\rho_{wls} + 1)}$, which implies $c_2 \sim \sigma$, and the lower bound of diffusion constant $\sigma^{\ast}_2$ satisfies 
\begin{equation*}
\sigma_2^{\ast} \geq \max \bigg\{ \frac{800(40+m_2'')\rho_{wls}e}{a^2 \gamma}, 3200\rho_{wls}e\gamma \bigg\} \cdot \max\{C_K^2, C_K^3\},
\end{equation*}
where $m_2'' = [4 + 6\gamma + 4\theta(2\gamma + \|\nabla W\|_{L^{\infty}})]^2 + [6\gamma + \theta(2\gamma + \| \nabla W\|_{L^{\infty}})]$.
\end{remark}
\begin{remark}
Theorem \ref{Thm:theorem1.4} implies that second order particle system \eqref{Eq:particle system} not only exponentially converges to its equilibrium, but also converges to the unique mean field equilibrium as $N \rightarrow \infty$. If we take $t \rightarrow \infty$, the results \eqref{IEq:Theorem 1.4-1} and \eqref{IEq:Theorem 1.4-2} imply that, 
\begin{equation*}
H_N(f_{N,\infty}|f^{\otimes N}_{\infty}) \leq \mathcal{E}_N^{M_i}(f_{N,\infty}|f^{\otimes N}_{\infty}) \leq \frac{C}{N},\ \ \ i =1,2,
\end{equation*}
which offers us a kind of dynamical approach to prove the concentration of the Gibbs or stationary measure of the second order particle system around the nonlinear equilibrium of the limiting equation \eqref{Eq:limiting equation}. 
\end{remark}

Combining the exponential convergence from $f_t$ to $f_{\infty}$, we could replace $f_{\infty}$ by $f_t$ in last theorem so that avoid the estimates about $\nabla \log f_t$ and $\nabla^2 \log f_t$. Based on this observation, we establish the uniform-in-time propagation of chaos  both in the sense of the Wasserstein distance and  relative entropy.

\begin{theorem}\label{Thm:theorem1.5}
Suppose that $V$ and $W$ satisfy one of the following two cases,

(i) $V$ satisfies Assumption \ref{Assumption:V} and \ref{Assumption:V1}, $W$ satisfies Assumption \ref{Assumption:W1}. Moreover, we assume either $C_K$ is small or interaction functional $F$ satisfies Assumption \ref{Assumption:W2}.

(ii) $V$ satisfies Assumption \ref{Assumption:V} and \ref{Assumption:V2}, $W$ satisfies Assumption \ref{Assumption:W1} and $\|\nabla W \|_{L^{\infty}} < \infty$. Moreover, we assume either $C_K$ is small or interaction functional $F$ satisfies Assumption \ref{Assumption:W2}.

For the first case, for initial data $f_N^0$ of Eq.\eqref{Eq:Liouville equation} such that $\mathcal{E}_N^{M_1}(f^0_N|f^{\otimes N}_{\infty}) < \infty$, $f^0$ of Eq.\eqref{Eq:limiting equation} such that $\mathcal{E}^{M}(f_0|\hat{f}_0) < \infty$, and $\sigma \geq \sigma_1^{\ast}$, we have
\begin{equation}\label{IEq:Theorem 1.5-1}
\mathcal{W}_2^2(f_{N,k}, f^{\otimes k}) \leq C_1'k e^{-c'_1t} + C_1 \frac{k}{N},
\end{equation}
where $c'_1 = \min\{c,c_1\} > 0$ and $C'_1 = C'_1(f_N^0, f_0, f_{\infty}, \rho_{LS}) > 0$ are explicit and independent of $N$. 

For the second case, for initial data $f_N^0$ of Eq.\eqref{Eq:Liouville equation} such that $\mathcal{E}_N^{M_2}(f^0_N|f^{\otimes N}_{\infty}) < \infty$, $f^0$ of Eq.\eqref{Eq:limiting equation} such that $\mathcal{E}^{M_2'}(f_0|\hat{f}_0) < \infty$, and $\sigma \geq \sigma_2^{\ast}$, we have
\begin{equation}\label{IEq:Theorem 1.5-2}
\mathcal{W}_2^2(f_{N,k}, f^{\otimes k}) \leq C_2'ke^{-c'_2t} + C_2\frac{k}{N},
\end{equation}
where $c'_2 = \min\{c_2, c_2''\} > 0$ and  $C'_2 = C'_2(f_N^0, f_0, f_{\infty}) > 0$ are explicit and independent of $N$.
\end{theorem}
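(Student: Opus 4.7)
The plan is to bound $\mathcal{W}_2^2(f_{N,k}^t, f_t^{\otimes k})$ by inserting the equilibrium reference $f_\infty^{\otimes k}$ and splitting via the quadratic triangle inequality:
\begin{equation*}
\mathcal{W}_2^2(f_{N,k}^t, f_t^{\otimes k}) \le 2\,\mathcal{W}_2^2(f_{N,k}^t, f_\infty^{\otimes k}) + 2\,\mathcal{W}_2^2(f_\infty^{\otimes k}, f_t^{\otimes k}).
\end{equation*}
The first term encodes the finite-$N$ fluctuations around the nonlinear equilibrium and will be controlled by Theorem \ref{Thm:theorem1.4}; the second term encodes the relaxation of the limiting PDE and will be controlled by the exponential convergence estimate \eqref{IEq:convergence of limiting equation} obtained in Theorem \ref{Thm: theorem-1.1} / Theorem \ref{Thm:theorem-1.2} of the appendix.

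To handle the first piece, I would use that the log-Sobolev inequality for $f_\infty$ (resp.\ the weighted LSI in case (ii), which still yields an LSI for $f_\infty$ with some constant thanks to the super-quadratic growth of $V$) tensorizes to $f_\infty^{\otimes N}$ with the \emph{same} constant. Then Otto--Villani's $T_2$ inequality gives $\mathcal{W}_2^2(f_N^t,f_\infty^{\otimes N}) \le C_{T_2}\, H(f_N^t|f_\infty^{\otimes N})$. Since $f_N^t$ is exchangeable, projecting an optimal coupling onto any $k$ of the $N$ coordinates yields
\begin{equation*}
\mathcal{W}_2^2(f_{N,k}^t, f_\infty^{\otimes k}) \le \tfrac{k}{N}\,\mathcal{W}_2^2(f_N^t, f_\infty^{\otimes N}) \le C_{T_2}\, k\, H_N(f_N^t|f_\infty^{\otimes N}),
\end{equation*}
and applying Theorem \ref{Thm:theorem1.4} bounds this by $C_{T_2}\, k\bigl(e^{-c_i t}\mathcal{E}_N^{M_i}(f_N^0|f_\infty^{\otimes N}) + C_i/N\bigr)$.

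For the second piece I would use the tensorization of $\mathcal{W}_2^2$ for product measures, $\mathcal{W}_2^2(f_\infty^{\otimes k}, f_t^{\otimes k}) \le k\,\mathcal{W}_2^2(f_\infty, f_t)$, and then $T_2$ for $f_\infty$ together with \eqref{IEq:convergence of limiting equation}:
\begin{equation*}
\mathcal{W}_2^2(f_\infty, f_t) \le C_{T_2}\, H(f_t|f_\infty) \le C\, e^{-c_3' t},
\end{equation*}
where in case (i) the exponential decay comes from the small-$\|\nabla K\|_\infty$ or convex-$F$ hypothesis, and in case (ii) from the weighted-LSI based argument of Theorem \ref{Thm:theorem-1.2}. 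Adding the two bounds gives the claimed estimate with $c' = \min\{c_i, c_3'\}$ and $C'$ depending on $\mathcal{E}_N^{M_i}(f_N^0|f_\infty^{\otimes N})$ and on the initial distance $H(f_0|f_\infty)$ entering \eqref{IEq:convergence of limiting equation}.

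The step I expect to be the main obstacle is justifying that standard (unweighted) $T_2$ is available for $f_\infty$ in case (ii), where Assumption \ref{Assumption:V2} only directly yields a \emph{weighted} LSI; the resolution is that the super-quadratic confinement still produces a finite (though possibly large) unweighted LSI/$T_2$ constant, which is enough since we only need the prefactors to be independent of $N$ (not sharp). A subsidiary point is tracking that the constants arising from tensorization of LSI and from projection of Wasserstein couplings are genuinely $N$-independent, and that the assumption $\sigma \ge \sigma_i^\ast$ in Theorem \ref{Thm:theorem1.4} is compatible with the regime in which \eqref{IEq:convergence of limiting equation} is established, so that both ingredients can be invoked simultaneously.
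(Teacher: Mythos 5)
Your proposal follows essentially the same route as the paper: split via the Wasserstein triangle inequality through the equilibrium product measure, control the finite-$N$ piece with Theorem~\ref{Thm:theorem1.4} together with a Talagrand $T_2$ inequality for $f_\infty^{\otimes N}$, control the PDE-relaxation piece with Theorem~\ref{Thm: theorem-1.1}/\ref{Thm:theorem-1.2}, and pass to $k$-marginals by exchangeability (the paper does the triangle inequality at the $N$-particle level and projects at the end, which is equivalent). The obstacle you flag in case (ii) is resolved exactly as you anticipate: since the paper takes $H \ge H_0 \ge 1$, one has $H^{-2\theta} \le 1$, so the weighted log-Sobolev inequality for $f_\infty$ implies the standard one (and hence $T_2$) with the same constant; moreover for the $\mathcal{W}_2^2(f_t,f_\infty)$ term the paper in fact bounds it directly by the modulated energy rather than via $T_2$ applied to $H(f_t|f_\infty)$, which sidesteps the issue as well.
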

\begin{remark}
The constants $C_1'$ and $C_2'$ are taken as 
\begin{equation*}
\begin{aligned}
C_1' & = (1+\rho_{LS})[\mathcal{E}_N^{M_1}(f^0_N|f^{\otimes N}_{\infty}) + \mathcal{E}^M(f_0|\hat{f}_0)], \\
C_2' & = \mathcal{E}_N^{M_2}(f^0_N|f^{\otimes N}_{\infty}) + \mathcal{E}^{M'_2}(f_0|\hat{f}_0),
\end{aligned}
\end{equation*}
and $c_2'', M_2'$ can be found in Theorem \ref{Thm:theorem-1.2}.  
\end{remark}

\begin{remark}
The main difference of assumptions in Theorem \ref{Thm:theorem1.5} compared with  those in Theorem \ref{Thm:theorem1.4} is that $C_K$ is small or the  interaction functional $F$ is convex. Those  two  conditions make sure that $f_t$ exponentially converges to $f_{\infty}$. The small condition of $C_K$
comes from Theorem 10 in \cite{UPIandULSI}, which obtains the exponential convergence from $f_t$ to $f_{\infty}$ by uniform log Sobolev inequality of $f_{N,\infty}$. The convexity condition of $F$ is inspired by Theorem 2.1 in \cite{chen2023uniform}, this kind of condition avoid the smallness assumption on  $C_K$. We extend their result to more general confining potentials in the Appendix.
\end{remark}

\begin{remark}
The uniform-in-time propagation of chaos of second order particle system \eqref{Eq:particle system} has been investigated in  \cite{coupling2} and \cite{chen2023uniform}. Compared with \cite{coupling2} that exploits the coupling method, our result applies to more general confining potentials. In terms of Theorem 2.3 in \cite{chen2023uniform}, we do not need the uniform-in-$m$ log-Sobolev inequality of measure proportional to $e^{-\frac{\delta F}{\delta m}(m,x)}$, which is not very easy to verify. 
\end{remark}

\subsection{Related literature}

{\em Hypocoercivity.}  Hypocoercivity is an important analytical tool to study the long time behavior of Langevin dynamics and the corresponding kinetic Fokker-Planck equation. It was  initiated by Villani \cite{Hypocoercivity} and and then later advanced by Dolbeault, Mouhot and Schmeiser  in \cite{Dolbeault2008HypocoercivityFK} and \cite{Dolbeault2010HYPOCOERCIVITYFL}. However, those now well-known results are only restricted to the one particle dynamics without any interactions. For the $N$ particle system given by the Liouville equation \eqref{Eq:Liouville equation}, the natural stationary measure is simply the Gibbs measure given by the following form  
\begin{equation*}
f_{N,\infty} = \frac{1}{Z_{N,\beta}} e^{-\beta H(z_1,...,z_N)}.
\end{equation*}
A natural problem is that whether or not the convergence rate  from $f_N$ toward $f_{N,\infty}$ depends on the number of particles $N$. Many researchers have contributed to this problem. Guillin, etc study the uniform in $N$ functional inequalities in \cite{UPIandULSI}. Guillin and Monmarch\'e show uniform-in-$N$ exponential decay rate in \cite{momashe1} and \cite{momashe2} by ``Generalized $\Gamma$ calculus" developed in \cite{momashe3} and uniform log-Sobolev inequality in \cite{UPIandULSI}. Guillin, etc also use $H^1$ type norm to show the uniform-in-$N$ exponential decay rate by hypocoercivity and uniform Poincar\'{e} inequality in \cite{UPIandULSI}. These result are all restricted to potentials with smallness of $\| \nabla^2 U \|_{L^{\infty}}$. There are also some results that treat systems with singular potentials. Baudoin, Gordina and Herzog showed convergence to equilibrium by Gamma calculus in \cite{GammaCBV} with singular potentials. Lu and Mattingly constructed new Lyapunov function to show egodicity for systems \eqref{Eq:particle system} with Coulomb potential in the sense of weighted total variation distance in \cite{lu2019geometric}. However, the convergence rates, if they provides one,   all depend on $N$. 
\\ 
\\
{\em Propagation  of  chaos  for  kinetic Vlasov equation. }  The main result presented in this article is a further development of the relative entropy method introduced in \cite{JFAJW}, where Jabin and and the 2nd author proved a quantitative propagation of chaos for Newton's system with  bounded  interaction kernel in terms of relative entropy.  Lacker  \cite{Lacker1} then developed an approach based on the  BBGKY hierarchy and the entropy dissipation to optimize  the local convergence rate of $k$-marginals towards the limiting law.   Bresch, Jabin and Soler  \cite{BJJ} exploited the BBGKY hierarchy approach to firstly include the 2d Vlasov-Possion-Fokker-Planck case.  More recently, Bresch, Jabin and Duerinckx \cite{bresch2024duality} introduced a duality approach to cover the arbitrary square-integrable interaction forces at possibly vanishing temperature. Up to now, the  mean field limit or the propagation of chaos results are still very limited for second order particle system with singular interaction forces. See also for the results in \cite{hauray2007n,jabin2015particles,lazarovici2017mean,carrillo2018mean} and the review \cite{jabin2014review} for more detailed discussions. \\
For long time propagation of chaos, Monmarch\'e showed   uniform-in-time propagation of chaos in the sense of Wasserstein distance of one marginal for systems with convex potentials, i.e.
\begin{equation*}
\mathcal{W}_2(f_{N,1},f) \leq \frac{C}{N^{\alpha}},
\end{equation*}
for some contant $\alpha > 0, C > 0$ independent on $N$ and $t$. The sharp rate with $\alpha = \frac{1}{2}$ for the case $W(x) = c|x|^2$ has also been established there. Guillin and Monmarch\'e \cite{momashe2} later improved the convergence result to all marginals but without optimality in terms of of $\alpha$, i.e.
\begin{equation*}
\mathcal{W}_2(f_{N,k},f^{\otimes k}) \leq \frac{C \sqrt{k}}{N^{\alpha}},
\end{equation*}
for some contant $\alpha > 0, C > 0$ independent on $N$ and $t$. Thanks to the reflection coupling method, Guillin, Bris and Monmarch\'e  \cite{coupling2} proved the optimal convergence rate of $N$ for all marginals with convex or non-convex interaction potentials, i.e.
\begin{equation*}
\mathcal{W}_2(f_{N,k},f^{\otimes k}) \leq \frac{C \sqrt{k}}{\sqrt{N}},
\end{equation*}
for some constant $C > 0$ independent on $N$ and $t$, with the smallness assumption of the Lipschitz constant of interaction force $K$.  Recently, Chen, Lin, Ren and Wang \cite{chen2023uniform} showed  uniform-in-time propagation of chaos with functional convexity condition. Even though they do not  need smallness of $\|\nabla K\|_{L^\infty}$ , they require some  uniform-in-time Poincar\'e inequality for  the solution of the limiting PDE \eqref{Eq:limiting equation} . To the best of our knowledge, there is no result of uniform-in-time propagation of chaos for second order systems with singular interaction forces yet. We leave this topic for our further study.  \\
\\
$Equilibrium\ of\ Vlasov\mbox{-}Fokker \mbox{-}Planck\ equation.$ Uniform-in-time propagation of chaos cannot hold in general. One critical obstacle is that the non-linear Vlasov-Fokker-Planck equation \eqref{Eq:limiting equation} may have multiple equilibria and  hence exhibit phrase transition. The convergence from $f_t$ towards $f_{\infty}$ prevents the phrase transition or the presence of multiple equilibria of the limiting system. There are some results about this kind of convergence but with very limited conditions about potentials. See for instance \cite{PhLSI} and the reference therein.  Villani \cite{Hypocoercivity} proved that  $f_t$ converge to the Maxwellian
\begin{equation*}
\frac{1}{(2\pi)^d} e^{-\beta \frac{v^2}{2}}
\end{equation*}
on $\mathbb{T}^d$ with any polynomial order in the sense of $L^1$ norm, which requires that $W \in C^{\infty}$ and $\| W \|_{L^{\infty}}$ is small enough. Guillin and Monmarch\'e showed that  $f$ converges to $f_{\infty}$ in the sense of ``mean-field entropy" in \cite{momashe2}, which defines as 
\begin{equation}\label{Quantity:mean field entropy}
H_{W}(\nu) = E(\nu) - \inf_{\mu \in \mathcal{P}(\Omega \times \R^{d})} E(\mu)
\end{equation}
for probability measure $\nu$,  where
$E(\nu) = H(\nu|\alpha) + \frac{1}{2} F(\nu)$ and $\alpha \propto e^{-\frac{1}{2}v^2 -V(x)}$. Baudoin, Feng and Li \cite{FengQi1} established  that $f$ converges
to $f_{\infty}$ with exponential decay rate in the sense of free energy combining with ``relative Fisher Information" (by our notation)
\begin{equation}\label{Quantity:mean field RFi}
\mathcal{E}^M(f_t|\hat{f}_t) = \mathcal{F}(f_t) - \mathcal{F}(f_{\infty}) + I^M(f_t|\hat{f}_t),
\end{equation}
where $M$ is a constant matrix and $\hat{f} \propto e^{-\frac{v^2}{2} - V(x) - W \ast \rho_t}$. The free energy they used is  defined as 
\begin{equation}\label{Quantity:free energy}
\mathcal{F}(f) = \frac{1}{2} \int_{\Omega \times \R^d} v^2 f \ud x \ud v + \int_{\Omega \times \R^d} f \log f \ud x \ud v + \int_{\Omega \times \R^d} Vf \ud x \ud v + \frac{1}{2} F(f).
\end{equation}
They used $\Gamma-$calculus to overcome the dissipation degeneracy in $x$ direction with convexity and smallness of $\nabla^2 V$ and $\nabla^2 W$.  Chen, Lin, Ren and Wang also exploited the quantity \eqref{Quantity:mean field RFi} to prove $\mathcal{E}^M(f_t|\hat{f}_t)$ exponentially converges to 0 in \cite{chen2023uniform} under conditions $\|\nabla^2 (W \ast \rho_t + V)\|_{L^{\infty}} < \infty$ and the functional convexity of  $F$ (Assumption \ref{Assumption:W2}). These two groups both used  the so called free energy  to quantify the convergence from $f_t$ to $f_{\infty}$, i.e.
\begin{equation*}
H_W(f_t) = \mathcal{F}(f_t) - \mathcal{F}({f_{\infty}}).
\end{equation*}
Finally, let us recall the convergence result in \cite{UPIandULSI} and extend Theorem 2.1 in \cite{chen2023uniform} to more general confining potentials. By \cite{UPIandULSI}, we have
\begin{theorem}\label{Thm: theorem-1.1}
Suppose that  $V$ satisfies Assumption \ref{Assumption:V} and \ref{Assumption:V1}, $W$ satisfies Assumption \ref{Assumption:W1}. Suppose $f_{N,\infty}$ satisfies uniform log Sobolev inequality with constant $\rho_{LS}$. Then for the solution $f_t$ of Eq.(\ref{Eq:limiting equation}) with initial data $f_0$ such that $H_W(f_0) < \infty$ and $\int_{\Omega \times \R^d} z^2 f_0(z) \ud 
z < \infty$, we have
\begin{equation*}
H_W(f_t) \leq e^{-ct}H_{W}(f_0), 
\end{equation*} 
and 
\begin{equation}
\mathcal{W}_2^2(f_t, f_{\infty}) \leq \rho_{LS} H_W(f_t) \leq \rho_{LS}\mathcal{E}^M(f_0|\hat{f}_0) e^{-ct}.
\end{equation}
where $c>0$ is the same as Theorem \ref{Thm:theorem1.3}.
\end{theorem}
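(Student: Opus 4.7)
The plan is to lift the problem to the $N$-particle Liouville equation, apply the uniform-in-$N$ exponential decay from Theorem \ref{Thm:theorem1.3}, and pass to the mean-field limit $N\to\infty$. Since the rate $c$ in Theorem \ref{Thm:theorem1.3} is independent of $N$, the same rate survives in the limit, yielding the asserted convergence for $f_t$.

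First I would evolve the Liouville equation from the tensorized initial datum $f_0^{\otimes N}$ and invoke Theorem \ref{Thm:theorem1.3} to get $H_N(f_N^t|f_{N,\infty}) \leq e^{-ct}\mathcal{E}^M_N(f_0^{\otimes N}|f_{N,\infty})$. A direct expansion using the Gibbs form $f_{N,\infty}=Z_N^{-1}e^{-\beta H_N}$ and the Hamiltonian \eqref{Eq:Hamiltonian energy} yields the identity
\begin{equation*}
\frac{1}{N} H(f^{\otimes N}|f_{N,\infty}) = \mathcal{F}(f) - \frac{1}{2N}\int W(x-y)f(x)f(y)\,\ud x\,\ud y + \frac{1}{N}\log Z_N,
\end{equation*}
and under Assumptions \ref{Assumption:V}, \ref{Assumption:V1}, \ref{Assumption:W1}, concentration of the Gibbs measure gives $-\tfrac{1}{N}\log Z_N \to \mathcal{F}(f_\infty)$, so the right-hand side tends to $H_W(f)$ as $N\to\infty$. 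Applying this with $f=f_t$ and $f=f_0$, combined with finite-time propagation of chaos in relative entropy for globally Lipschitz kernels (so that the marginals of $f_N^t$ converge to $f_t^{\otimes k}$ quantitatively on each compact time window) and the lower semicontinuity of relative entropy, produces $H_W(f_t) \leq e^{-ct}\mathcal{E}^M(f_0|\hat f_0)$.

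For the Wasserstein bound I would invoke the Otto-Villani theorem: the uniform LSI for $f_{N,\infty}$ implies Talagrand's $T_2$ inequality with the same constant $\rho_{LS}$, so $\mathcal{W}_2^2(\mu_N,f_{N,\infty}) \leq \rho_{LS}H(\mu_N|f_{N,\infty})$. Applying $T_2$ to $\mu_N=f_t^{\otimes N}$ and $\mu_N=f_\infty^{\otimes N}$, using the tensorization identity $\mathcal{W}_2^2(f_t^{\otimes N},f_\infty^{\otimes N})=N\mathcal{W}_2^2(f_t,f_\infty)$ together with a triangle inequality against $f_{N,\infty}$, then passing to the limit $N\to\infty$ with the free-energy identification above, yields $\mathcal{W}_2^2(f_t,f_\infty) \leq \rho_{LS} H_W(f_t)$, and the chain of inequalities in the statement follows. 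The main obstacle is the partition-function asymptotic $-\tfrac{1}{N}\log Z_N\to \mathcal{F}(f_\infty)$, a large-deviations-type statement that hinges on uniqueness of the nonlinear equilibrium of \eqref{Eq:limiting equation}; under our assumptions this uniqueness is guaranteed by the smallness $C_K<\tfrac12\lambda$ in Assumption \ref{Assumption:W1} together with the confining lower bound in Assumption \ref{Assumption:V}. A secondary technicality is making the finite-time propagation of chaos quantitative enough in relative entropy to justify the limit passage; for globally Lipschitz $W$ this is standard via the relative-entropy method of \cite{JFAJW}.
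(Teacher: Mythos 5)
Your approach matches the paper's: lift to the $N$-particle Liouville equation, apply Theorem \ref{Thm:theorem1.3} to the tensorized initial datum, and pass $N\to\infty$ using finite-time propagation of chaos, lower semi-continuity of relative entropy, and the free-energy / partition-function identification (the paper implements this via Lemmas 17, 18, 23 of \cite{UPIandULSI} and the large-deviation limit from \cite{PhLSI}). Your Wasserstein step (Otto--Villani $T_2$ for $f_{N,\infty}$ plus tensorization and a triangle inequality) is framed a bit differently from the paper's direct citation of the Talagrand-type inequality for the mean-field entropy in \cite{UPIandULSI}, but it is the same mechanism.
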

We extend Theorem 2.1 in \cite{chen2023uniform} to more general confining potentials,
\begin{theorem}\label{Thm:theorem-1.2}
Suppose that  $V$ satisfies Assumption \ref{Assumption:V} and \ref{Assumption:V2}, $W$ satisfies Assumption \ref{Assumption:W1} and Assumption \ref{Assumption:W2}. Then for solution $f_t$ of Eq.\eqref{Eq:limiting equation} with initial data $f_0$ such that $\mathcal{E}^{M_2'}(f_0|\hat{f}_0) < \infty$, we have
\begin{equation}
\mathcal{W}_2^2(f_t, f_{\infty}) \leq \mathcal{E}^M(f_t|\hat{f}_t) \leq e^{-c''_2t}\mathcal{E}^M(f_0|\hat{f}_0), 
\end{equation}
where $c''_2 = \frac{\delta a^2}{16 + 16\rho_{wls}}$ with some choice of $M_2'$, $\delta$ and $a$ (detailed in the Appendix).
\end{theorem}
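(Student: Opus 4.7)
The plan is to build on the strategy of Theorem 2.1 in \cite{chen2023uniform} but to replace the constant matrix appearing in their relative Fisher information by a matrix-valued weight that incorporates the Hamiltonian multiplier $H(z)^{-\theta}$ from \cite{EntropyMutiplier}, in order to cope with the possibly super-quadratic growth of $V$ allowed by Assumption \ref{Assumption:V2}. Concretely, I will work with the single-particle modulated energy
\begin{equation*}
\mathcal{E}^{M_2'}(f_t|\hat{f}_t) = \bigl[\mathcal{F}(f_t) - \mathcal{F}(f_\infty)\bigr] + I^{M_2'}(f_t|\hat{f}_t),
\end{equation*}
where $M_2'(z)$ has the same $2\times 2$ block structure as the $M_2$ in Theorem \ref{Thm:theorem1.4} but at the single-particle level, with diagonal entries proportional to $\delta a^{3} H^{-3\theta}$, $\delta a^{2} H^{-2\theta}$ and $2\delta a H^{-\theta}$, and the constants $a,\delta$ tuned in the very end.

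For the free energy piece I would differentiate $\mathcal{F}(f_t)-\mathcal{F}(f_\infty)$ along \eqref{Eq:limiting equation}. Using $\nabla_v \log \hat{f}_t = -\beta v$, a direct computation yields
\begin{equation*}
\frac{\ud}{\ud t}\bigl[\mathcal{F}(f_t) - \mathcal{F}(f_\infty)\bigr] = -\sigma \int_{\Omega \times \R^d} f_t \Bigl| \nabla_v \log \tfrac{f_t}{\hat f_t} \Bigr|^2 \ud x \ud v,
\end{equation*}
the cross contribution from $\tfrac{1}{2}\tfrac{\ud}{\ud t} F(\rho_t)$ being absorbed precisely because $F$ is functional convex (Assumption \ref{Assumption:W2}), exactly as in \cite{chen2023uniform}. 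This supplies pure velocity dissipation against the moving reference $\hat f_t$.

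For the weighted Fisher information, I would mimic the hypocoercive computation behind Lemma \ref{lemma:Keylemma2} at the level of the nonlinear PDE, with $\hat f_t$ replacing $f_t^{\otimes N}$. This produces an inequality of the form
\begin{equation*}
\frac{\ud}{\ud t} I^{M_2'}(f_t|\hat{f}_t) \leq -c_1 \int f_t H^{-3\theta} \Bigl|\nabla_x \log\tfrac{f_t}{\hat{f}_t}\Bigr|^2 \ud z + c_2 \int f_t H^{-\theta} \Bigl|\nabla_v \log\tfrac{f_t}{\hat{f}_t}\Bigr|^2 \ud z + \mathcal{R},
\end{equation*}
where $\mathcal{R}$ collects: the Hessian of $V$, controlled by $C_V^\theta$ against the weight $H^{-2\theta}$; the Hessian of $W\ast\rho_t$, controlled by $C_K$; commutators between $H^{-\theta}$ and the kinetic vector field $v\cdot\nabla_x - \nabla V\cdot\nabla_v$, handled via Assumption \ref{Assumption:V2}(i)--(ii); and a new term $\partial_t \log \hat f_t = -\beta \partial_t(W\ast\rho_t)$, which is uniformly bounded thanks to $\|\nabla W\|_{L^\infty}<\infty$ and can be absorbed into the velocity dissipation via Cauchy--Schwarz.

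With both evolutions in hand I would select $a,\delta$ and the lower bound $\sigma_2^*$ so that the positive velocity piece from the Fisher information is dominated by the negative dissipation coming from the free energy, and every error in $\mathcal{R}$ is absorbed into a small fraction of the two good terms. Applying the weighted log-Sobolev inequality for $f_\infty$ established in Section \ref{section:WLSI} to bound $\mathcal F(f_t)-\mathcal F(f_\infty)$ by the weighted Fisher information closes the Gronwall inequality $\tfrac{\ud}{\ud t}\mathcal E^{M_2'}(f_t|\hat f_t) \leq -c_2'' \mathcal E^{M_2'}(f_t|\hat f_t)$ with the advertised rate $c_2'' = \delta a^2/(16+16\rho_{wls})$, and the bound $\mathcal W_2^2(f_t,f_\infty)\leq \mathcal E^{M_2'}(f_t|\hat f_t)$ follows from the Talagrand-type inequality implied by the same weighted LSI. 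The main obstacle will be the bookkeeping of $\mathcal R$: the weight $H^{-\theta}$ commutes neither with the Hamiltonian vector field nor with $\partial_t \hat f_t$, so several new error terms appear that are absent in \cite{chen2023uniform}, and verifying that Assumption \ref{Assumption:V2}(i)--(ii) together with the specific scalings of the entries of $M_2'$ yield a strictly negative quadratic form after absorption is the delicate step that dictates the thresholds on $a$, $\delta$ and $\sigma$.
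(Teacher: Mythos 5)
Your high-level blueprint matches the paper's: build $\mathcal{E}^{M_2'} = [\mathcal{F}(f_t)-\mathcal{F}(f_\infty)] + I^{M_2'}(f_t|\hat f_t)$ with the single-particle weight $e=\delta a^3 H^{-3\theta}$, $f=\delta a^2 H^{-2\theta}$, $g=2\delta a H^{-\theta}$, compute the free-energy dissipation, run the weighted hypocoercive estimate on $I^{M_2'}$, and close with the weighted log-Sobolev inequality from Section \ref{section:WLSI}. However, there is a genuine gap in your treatment of the error coming from the time dependence of the reference measure $\hat f_t$, and this gap is fatal for the exponential rate.

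You write that $\partial_t\log\hat f_t = -\beta\,\partial_t(W\ast\rho_t)$ is ``uniformly bounded thanks to $\|\nabla W\|_{L^\infty}<\infty$ and can be absorbed into the velocity dissipation via Cauchy--Schwarz.'' A uniform bound is not enough. If $\nabla R$ were only bounded by a constant $C$, then after Cauchy--Schwarz the contribution $2\int f_t \langle \nabla R, M_2'\nabla u\rangle$ would leave a residual additive constant $\sim C^2/\sigma$ that does not vanish as $t\to\infty$, so the best you could obtain is $\mathcal{E}^{M_2'}(t) \le e^{-c''_2 t}\mathcal{E}^{M_2'}(0) + C'$, not pure exponential decay. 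The paper instead exploits the structure: $\nabla R = \nabla W \ast \partial_t\rho_t$ with $\partial_t\rho_t = -\nabla_x\cdot(v_t^x\rho_t)$, so after integration by parts $\nabla R = \nabla^2 W\ast(v_t^x\rho_t)$ and
\begin{equation*}
|\nabla R| \le C_K \Bigl(\int |v_t^x|^2\rho_t\Bigr)^{1/2} \le C_K \,\bigl\|\nabla_v \log\tfrac{f_t}{\hat f_t}\bigr\|_{L^2(f_t)}.
\end{equation*}
The error is therefore proportional to the square root of the velocity dissipation itself — it scales to zero with the dissipation — and only then can it be absorbed multiplicatively (giving the smallness constraint relating $\delta$, $a$, $C_K$, $\sigma$) rather than additively. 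This is the delicate step you need, and merely invoking $\|\nabla W\|_{L^\infty}<\infty$ (which the paper uses for a different purpose, namely bounding the kinetic generator acting on the weight $H^{-s\theta}$) does not supply it; you must use $\|\nabla^2 W\|_{L^\infty}=C_K$ together with the continuity equation and the identification of $v_t^x$ with the average of $\nabla_v u$ in $L^2(f_t)$.

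A smaller inaccuracy: you attribute the identity $\frac{\ud}{\ud t}[\mathcal{F}(f_t)-\mathcal{F}(f_\infty)] = -\sigma\int f_t|\nabla_v\log\tfrac{f_t}{\hat f_t}|^2$ to the convexity of $F$, but this is an exact free-energy dissipation identity that holds without convexity. Convexity (Assumption \ref{Assumption:W2}) is used later, to get the entropy sandwich $\mathcal{F}(f_t)-\mathcal{F}(f_\infty)\le H(f_t|\hat f_t)$, which together with the weighted LSI for $\hat f_t$ and the Talagrand inequality closes the Gronwall estimate and gives $\mathcal{W}_2^2(f_t,f_\infty)\le\mathcal{E}^{M_2'}(f_t|\hat f_t)$.
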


We will give the sketch of proofs of these two theorems in the Appendix.

\subsection{Outline of the article}
The paper is then organized as follow: In Section \ref{section:preliminary}, we develop the basic tools we will use throughout this article. In Section 2.1, we introduce the normalized relative Fisher information and compute its  time evolution  under the kinetic dynamics \eqref{Eq:Liouville equation} and \eqref{Eq:limiting equation}.  In Section 2.2, we select the nontrivial matrix $M$ for relative Fisher information to deal with the confining potentials greater than a quadratic  function at infinity, where the crucial idea  ``entropy multipliers" is inspired by  the one particle case as   in \cite{EntropyMutiplier}. In Section 2.3, we introduce the weighted log-Sobolev inequality, which is essentially obtained  with the entropy multiplier method. In Section 2.4, we prove a new Law of Large Number estimates for systems with Lipschitz interaction force $K$. In Section 3, we give the complete proof of our main results, Theorem \ref{Thm:theorem1.3}, \ref{Thm:theorem1.4} and \ref{Thm:theorem1.5}. In the  Appendix, we prove the convergence from  $f$ to $f_{\infty}$ under some conditions on $V$ and $W$. 

\setcounter{equation}{0}

\section{Preliminary}\label{section:preliminary}
Let us define some linear operators in $(\Omega \times \R^d)^N$ we will use in this section. We denote $A = (0, \nabla_V)$ on $(\Omega \times \R^d)^N$ and $A_i = (0, \nabla_{v_i})$ on $\Omega \times \R^d$. The operator $B$ collects all of one order part of the Liouville operator $L_N$ in (\ref{Liouville operator}), i.e.
\begin{equation*}
B = \sum_{i=1}^N B_i, \ \ B_i = v_i \cdot \nabla_{x_i} - \nabla_{x_i}U \cdot \nabla_{v_i} - \gamma v_i \cdot \nabla_{v_i},
\end{equation*}
where $U$ is defined  in  (\ref{Eq:potential}). We write  the infinitesimal generator of $N$-particle system \eqref{Eq:particle system} as 
\begin{equation}\label{op:Dual Liouville}
L^{\ast}_N = B + \sigma \Delta_V.
\end{equation}

We recall the time evolution of the relative entropy as in \cite{JFAJW,InventionJW}. 
\begin{lemma}\label{lemma:error term of RE}
Assume that $f_N$ is a solution of Eq.\eqref{Eq:Liouville equation}. Assume further that $f(t,z) \in W^{1, \infty}(\R \times \Omega \times \R^d)$ solves Eq.\eqref{Eq:limiting equation} with $f(t,z) > 0$ and $\int_{\Omega \times \R^d} f(t,z)dz = 1$. Then
\begin{equation}\label{Eq:relative entropy}
\begin{aligned}
\frac{d}{dt} H_N(f_N|\bar{f_N}) = & - \frac{\sigma}{N} \int_{(\Omega \times \R^d)^N} f_N \left| \nabla_V \log \frac{f_N}{\bar{f_N}} \right|^2 \ud Z \\ & - \frac{1}{N} \int_{(\Omega \times \R^d)^N} \nabla_V \log \frac{f_N}{\bar{f_N}} \cdot R^0_N \ud Z,
\end{aligned}
\end{equation}
where $R^0_N$ is a $dN$-dimensional vector defined as 
\begin{equation*}
R^0_N = \bigg\{ \frac{1}{N} \sum_{j=1,j\neq i}^N K(x_i - x_j) - K \ast \rho_t(x_i) \bigg\}_{i=1}^N
\end{equation*}
if we take $\bar{f_N} = f^{\otimes N}(t,Z)$, and the second line of \eqref{Eq:relative entropy} vanishes if we take $\bar{f_N} = f_{N,\infty}$.
\end{lemma}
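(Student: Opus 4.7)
The natural starting point is the differentiation-under-the-integral identity
\begin{equation*}
\frac{\ud}{\ud t} H_N(f_N|\bar f_N)
= \frac{1}{N}\int_{(\Omega\times\R^d)^N} \partial_t f_N \,\log\frac{f_N}{\bar f_N}\,\ud Z
- \frac{1}{N}\int_{(\Omega\times\R^d)^N} \frac{f_N}{\bar f_N}\,\partial_t \bar f_N\,\ud Z,
\end{equation*}
which uses only $\int\partial_t f_N\,\ud Z = 0$ (mass conservation). The plan is then to plug in the Liouville equation \eqref{Eq:Liouville equation} for $\partial_t f_N$, and either the tensorized limiting equation \eqref{Eq:limiting equation} for $\partial_t \bar f_N$ in the case $\bar f_N = f^{\otimes N}$, or the stationarity relation $\partial_t \bar f_N = 0$ in the case $\bar f_N = f_{N,\infty}$, and then bookkeep the resulting terms after integration by parts.

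For the first case, write $\partial_t \log \bar f_N = \sum_i \partial_t \log f(t,z_i)$ and use \eqref{Eq:limiting equation} on each $f(t,z_i)$. First I would handle the transport and confining drift: the pieces $v_i\cdot \nabla_{x_i}$ and $-\nabla V(x_i)\cdot\nabla_{v_i}$ appear identically in both equations, so upon integration by parts these contributions cancel between the $\partial_t f_N$ side and the $\partial_t \bar f_N$ side exactly because they depend only on $(x_i,v_i)$. Next the friction term $\gamma\nabla_{v_i}\cdot(v_i\,\cdot)$ likewise cancels, and the velocity-Laplacian $\sigma\Delta_{v_i}$ combines through the elementary identity
\begin{equation*}
\int \sigma\Delta_{v_i}f_N \log\frac{f_N}{\bar f_N}\,\ud Z - \int \sigma\Delta_{v_i}\bar f_N\,\frac{f_N}{\bar f_N}\,\ud Z
= -\sigma\int f_N\Big|\nabla_{v_i}\log\frac{f_N}{\bar f_N}\Big|^2\,\ud Z,
\end{equation*}
after standard integration by parts (exploiting that the log-derivatives split across the tensor product). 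The only remaining term is the interaction drift, which in the Liouville equation reads $\frac{1}{N}\sum_{j\neq i}K(x_i-x_j)\cdot\nabla_{v_i}f_N$, while in $\partial_t f^{\otimes N}$ it reads $(K\ast\rho_t)(x_i)\cdot\nabla_{v_i}f(z_i)$. The difference, after another integration by parts in $v_i$ and using $\nabla_{v_i}\log\bar f_N = \nabla_{v_i}\log f(z_i)$, yields exactly
\begin{equation*}
- \frac{1}{N}\int f_N\,\nabla_V\log\frac{f_N}{\bar f_N}\cdot R^0_N\,\ud Z.
\end{equation*}
Summing the two contributions produces \eqref{Eq:relative entropy}.

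For the second case, $\partial_t \bar f_N\equiv 0$, so only the first integral remains. Plugging in the Liouville equation and integrating by parts, all transport and friction terms assemble into $\int f_N L_N^{*}\log\frac{f_N}{f_{N,\infty}}\,\ud Z$-type contributions in which the drift fields agree with $-\sigma\nabla\log f_{N,\infty}$ (because $f_{N,\infty}\propto e^{-\beta H}$ and the Hamiltonian \eqref{Eq:Hamiltonian energy} encodes precisely the exchangeable empirical interaction $\frac{1}{N}\sum_{j\neq i}K(x_i-x_j)$, not the convolution $K\ast\rho_t$). Consequently the mean-field mismatch $R^0_N$ never appears and only the velocity Fisher information survives.

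The routine parts are the integrations by parts; the only step requiring genuine care is the first case, where one must verify that the nonlinear mean-field force $K\ast\rho_t$ is the unique source of the error term $R^0_N$ and that all other drift pieces telescope exactly. This matching is what makes the choice of reference measure $f^{\otimes N}$ natural and is precisely what fails to cancel for generic reference measures.
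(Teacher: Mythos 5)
Your plan is the standard one, and it is essentially the same route the paper itself takes: the paper's proof of this lemma simply records that ``the computation is standard'' and points to \cite{Hypocoercivity} and \cite{Lacker1} for the $f_{N,\infty}$ and $f^{\otimes N}$ cases respectively, so a direct expansion along exactly your lines is what is intended.

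Two small points worth flagging. First, your final error term carries the weight $f_N$, i.e.\ $-\frac{1}{N}\int f_N\,\nabla_V\log\frac{f_N}{\bar f_N}\cdot R_N^0\,\ud Z$, whereas the lemma as printed omits $f_N$. The density \emph{should} be there — without it $\nabla_V\log(f_N/\bar f_N)$ is not even integrable, and the subsequent applications of the lemma (the Young-inequality step in the paper's own \eqref{1.2-case1-entropy} and \eqref{1.2-case2-entropy}, which produce $\frac1\sigma\frac1N\int f_N|R_N^0|^2$) implicitly restore it. So your version is the correct one; the printed statement has a typo. Second, the sign of the error term is delicate and worth re-deriving carefully: tracking the interaction drift through both the $\partial_t f_N$ and $\partial_t\bar f_N$ contributions and one extra integration by parts in $v_i$ gives $\frac{1}{N}\int f_N\,\nabla_V u\cdot R_N^0\,\ud Z$ with the convention $R_N^0 = \frac1N\sum_{j\neq i}K(x_i-x_j) - K\ast\rho_t(x_i)$; you assert a minus sign without showing the cancellation explicitly. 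The sign is immaterial for the downstream estimate (it is killed by the Cauchy–Schwarz / Young step), but if you are citing this lemma with equality rather than inequality you should verify it. Everything else — the cancellation of transport, confining, and friction terms, the identity producing $-\sigma\int f_N|\nabla_V u|^2$, and the observation that for $\bar f_N = f_{N,\infty}$ the empirical interaction in the Gibbs measure matches the Liouville drift so no error term appears — is correct and matches what the cited references would give you.
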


\begin{proof}
The computation is standard. We recommend \cite{Hypocoercivity} and \cite{Lacker1} for the detailed computation when $f_N = f_{N,\infty}$ and $f^{\otimes N}$. Using Young inequality when $f_N = f^{\otimes N}$, we also have
\begin{equation*}
\begin{aligned}
\frac{d}{dt} H_N(f_N|\bar{f_N}) \leq & - \frac{3 \sigma}{4}\frac{1}{N} \int_{(\Omega \times \R^d)^N} f_N \left| \nabla_V \log \frac{f_N}{\bar{f_N}} \right|^2 \ud Z \\ & + \frac{4}{\sigma} \frac{1}{N} \int_{(\Omega \times \R^d)^N} |R^0_N|^2  \ud Z.
\end{aligned}
\end{equation*}
\end{proof}

In the next, we turn to the argument about Fisher information.

\subsection{Hypocoercivity in entropy sense}

In this subsection, we extend hypocoercivity in entropy sense in \cite{Hypocoercivity} to $N$ particle system with nontrivial interaction force. We also use more general reference measure in $(\Omega \times \R^d)^N$ ----- invariant measure $f_{N,\infty}$ or $N$ times tensor product of limiting measure $f^{\otimes N}$, corresponding to uniform egodicity problem and uniform-in-time propagation of chaos problem.

In the following, we use notation $h = \frac{f_N}{\bar{f_N}}$ and $u = \log h$ for convenience, $\bar{f_N}$ may take $f_{N,\infty}$ or $f^{\otimes N}$.
Before tedious manipulations, we firstly derive the equation of $u$.

\begin{lemma}\label{lemma:Eq-u}
Assume that $f_N$ is a solution of Eq.\eqref{Eq:Liouville equation}, and assume that $f(t,z) \in W^{2, \infty}(\R \times \Omega \times \R^d)$ solves Eq.\eqref{Eq:limiting equation}, then
\begin{equation}\label{Eq:u}
\partial_t u = -Bu - \sigma A^{\ast}Au + \sigma \nabla_V \log f_N \cdot A u - \overline{R}_N,
\end{equation}
where $\overline{R}_N$ takes
\begin{equation*}    
\overline{R}_N = \sum_{i=1}^{N} \nabla_{v_i}\log f(x_i,v_i) \cdot \bigg \{ \frac{1}{N}\sum_{j=1, j\neq i}^N K(x_i-x_j) - K\star \rho(x_i) \bigg \},
\end{equation*}
if $\bar{f_N} = f^{\otimes N}$ and $\overline{R}_N = 0$ if $\bar{f_N} = f_{N,\infty}$.
\end{lemma}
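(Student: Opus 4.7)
The statement is a chain-rule computation for $u = \log(f_N/\bar f_N)$. My plan is to write $f_N = h\bar f_N$ with $h = e^u$, expand the action of $L_N$ on the product $h\bar f_N$ via the Leibniz rule, divide through by $\bar f_N$ to obtain an equation for $h$, and then convert it to one for $u$ using standard log-derivative identities.

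The computational core is the product-rule identity
\begin{equation*}
L_N(hg) = h\,L_N g + g\bigl(Bh - \sigma\Delta_V h\bigr) - 2\sigma \nabla_V h \cdot \nabla_V g,
\end{equation*}
valid for smooth $h, g$. This is verified term by term from \eqref{Liouville operator}: the first-order transport, potential, and friction pieces split cleanly into $h\cdot L_N g$- and $g\cdot Bh$-contributions without any cross-term, while only the Laplacian $-\sigma \Delta_V$ produces the cross-term $-2\sigma \nabla_V h \cdot \nabla_V g$. Substituting $f_N = h\bar f_N$ into $\partial_t f_N + L_N f_N = 0$ and using that $\bar f_N$ satisfies its own forward equation $\partial_t\bar f_N + \tilde L_N \bar f_N = 0$---with $\tilde L_N = L_N$ when $\bar f_N = f_{N,\infty}$, and $\tilde L_N$ the tensorized mean-field Fokker-Planck operator whose one-particle version drives \eqref{Eq:limiting equation} when $\bar f_N = f^{\otimes N}$---yields
\begin{equation*}
\partial_t h = -Bh + \sigma\Delta_V h + 2\sigma\nabla_V h\cdot\nabla_V\log\bar f_N + h\,\frac{(\tilde L_N - L_N)\bar f_N}{\bar f_N}.
\end{equation*}
Dividing by $h$ and applying $Bh/h = Bu$, $\Delta_V h/h = \Delta_V u + |\nabla_V u|^2$, $\nabla_V h/h = \nabla_V u$ converts this into an equation for $u$.

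To identify the last term, note that $L_N$ and $\tilde L_N$ differ only in the velocity drift---empirical $\frac{1}{N}\sum_{j\neq i}K(x_i - x_j)$ versus mean-field $K\ast\rho_t(x_i)$---so
\begin{equation*}
\frac{(L_N - \tilde L_N)\bar f_N}{\bar f_N} = \sum_{i=1}^N \Bigl[\tfrac{1}{N}\sum_{j\neq i}K(x_i - x_j) - K\ast\rho_t(x_i)\Bigr]\cdot\nabla_{v_i}\log\bar f_N,
\end{equation*}
which vanishes for $\bar f_N = f_{N,\infty}$ and equals $\overline R_N$ for $\bar f_N = f^{\otimes N}$ (using $\nabla_{v_i}\log f^{\otimes N} = \nabla_v\log f(x_i,v_i)$). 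Finally, writing $\nabla_V\log f_N = \nabla_V u + \nabla_V\log\bar f_N$, I regroup
\begin{equation*}
\sigma\Delta_V u + 2\sigma\nabla_V u\cdot\nabla_V\log\bar f_N + \sigma|\nabla_V u|^2 = -\sigma A^*Au + \sigma\nabla_V\log f_N\cdot Au,
\end{equation*}
where $A^*$ denotes the adjoint of $A=(0,\nabla_V)$ in $L^2(\bar f_N)$, so that $A^*Au = -\Delta_V u - \nabla_V u\cdot\nabla_V\log\bar f_N$. This completes the derivation.

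There is no real analytic obstacle here; the whole argument is Leibniz bookkeeping plus the chain rule. The only subtle point is recognizing that the natural adjoint convention for $A^*$ is with respect to the reference measure $\bar f_N$, which is precisely what allows a single formula to cover both the equilibrium ($\bar f_N = f_{N,\infty}$) and product ($\bar f_N = f^{\otimes N}$) cases in a uniform way.
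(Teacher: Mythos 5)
Your proposal is correct and follows essentially the same route as the paper's proof: both derive the evolution equation for $u$ by chain-rule/log-derivative bookkeeping (the identity $\Delta_V F/F = \Delta_V\log F + |\nabla_V\log F|^2$ in the paper is exactly your $\Delta_V h/h = \Delta_V u + |\nabla_V u|^2$), identify the source term $\overline{R}_N$ as the drift mismatch between the Liouville operator and the tensorized mean-field operator, and then reassemble the diffusion contributions using the $L^2(\bar f_N)$-adjoint $A^{\ast}g = -A\cdot g - \langle\nabla_V\log\bar f_N, g\rangle$. The only cosmetic difference is that you factor $f_N = h\bar f_N$ and log at the end, while the paper writes the equations for $\log f_N$ and $\log\bar f_N$ separately and subtracts.
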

\begin{proof}
The proof is direct computation. In terms of  Eq.\eqref{Eq:Liouville equation}, we have
\begin{equation}\label{logfn}
\partial_t \log f_N  =  -B \log f_N + \sigma \frac{\Delta_V f_N}{f_N} + \gamma Nd,
\end{equation}
and for Eq.\eqref{Eq:limiting equation}, we have
\begin{equation}\label{logbarfn}
\partial_t \log \bar{f_N} = -B \log \bar{f_N} + \sigma \frac{\Delta_V \bar{f_N}}{\bar{f_N}} + \gamma Nd + \overline{R}_N,
\end{equation}
we could understand $\overline{R}_N$ as the difference of drift part between particle system \eqref{Eq:particle system} and McKean-Vlasov system \eqref{Eq:MV equation}. Combine Eq.(\ref{logfn}) and Eq.(\ref{logbarfn}), we have
\begin{equation}
\partial_t u = -Bu   + \sigma \left\{ \frac{\Delta_V f_N}{f_N} - \frac{\Delta_V \bar{f_N}}{\bar{f_N}} \right\} - \overline{R}_N,
\end{equation}
using identity 
\begin{equation*}
\frac{\Delta_V F}{F} = \Delta_V \log F + |\nabla_V \log F|^2, 
\end{equation*}
therefore, $u$ satisfies the equation
\begin{equation}\label{u1}
\partial_t u = -Bu + \sigma \Delta_V u + \sigma \nabla_V \log(f_N \bar{f_N}) \cdot \nabla_V u - \overline{R}_N,
\end{equation}
recall $A = (0,\nabla_V)$, now we regard $\bar{f_N}$ as reference measure and use Proposition 3 of \cite{Hypocoercivity}, for vector function $g: (\Omega \times \R^d)^N \rightarrow \R^{Nd}$, we have
\begin{equation}\label{conjugation}
A^{\ast}g = - A \cdot g - \z \nabla_V \log \bar{f_N}, g \y,
\end{equation}
now we rewrite Eq.(\ref{u1}) as following
\begin{equation*}
\partial_t u = -Bu - \sigma A^{\ast}Au + \sigma \nabla_V \log f_N \cdot A u - \overline{R}_N,
\end{equation*}
we complete the proof.
\end{proof}
\begin{remark}
For $\bar{f_N} = f^{\otimes N}$, the conjugation relationship (\ref{conjugation}) is in a series of Hilbert space $L^2(\bar{f_N})$ depends on time $t$ whose associated measure satisfy Eq.\eqref{Eq:limiting equation}. If we take reference measure as equilibrium of Eq.\eqref{Eq:Liouville equation}, then the equation about $u$ is the same as (\ref{u1}) but $\overline{R}_N = 0$: 
\begin{equation}
\partial_t u = -Bu - \sigma A^{\ast}Au + \sigma \nabla_V \log f_N \cdot A u,
\end{equation}
this is why we start our computation from Eq.(\ref{Eq:u}).
\end{remark}

Now let us compute the time derivation of relative Fisher Information. We omit the integration domain $(\Omega \times \R^d)^N$ for convenience.

\begin{lemma}\label{lemma:Keylemma1}
Assume that $f_N$ is a solution of Eq.\eqref{Eq:Liouville equation}. Assume that $f(t,z) \in W^{2, \infty}(\R \times \Omega \times \R^d)$ solves Eq.\eqref{Eq:limiting equation} with $f(t,z) > 0$ and $\int_{\Omega \times \R^d} f(t,z)dz = 1$. Let $B, C, C'$ be linear differential operators on $(\Omega \times \R^d)^{N}$, where
\begin{equation*}
B = \sum_{i=1}^N B_i, \ \ B_i = v_i \cdot \nabla_{x_i} - \nabla_{x_i}U \cdot \nabla_{v_i} - \gamma v_i \cdot \nabla_{v_i},
\end{equation*}
and $C, C'$ are to be confirmed, then

\begin{equation}\label{Eq:derivation of Fi}
\begin{split}
\frac{d}{dt} \left\{ \int f_N \langle Cu, C'u\rangle \right\} = & \ \int f_N \langle [B, C]u, C'u \rangle + \int f_N \langle Cu, [B, C']u \rangle  \\ & - \int f_N \langle  C \overline{R}_N, C'u \rangle  - \int f_N \langle  Cu, C'\overline{R}_N \rangle \\ & - 2 \sigma \int f_N \z CAu, C'Au \y + \sigma Q_{C,A} + \sigma Q_{C',A},
\end{split}
\end{equation}
where
\begin{equation}\label{Eq:QCA}
\begin{split}
Q_{C,A} = & -\int f_N \z [C,A^{\ast}]Au, C'u\y 
- \int f_N \z [\nabla_V \log \bar{f_N} \cdot A, C]u, C'u \y \\ & -\int f_N \z [C,A]u, Au \otimes C'u \y - \int f_N \z [A,C]u, C'Au \y,
\end{split}
\end{equation}

\begin{equation}\label{Eq:QC'A}
\begin{split}
Q_{C',A} = & -\int f_N \z Cu, [C',A^{\ast}]Au\y - \int f_N \z Cu, [\nabla_V \log \bar{f_N} \cdot A, C']u \y \\ & - \int f_N \z Au \otimes Cu, [C',A]u\y - \int f_N \z CAu, [A, C']u \y.
\end{split}
\end{equation}
\end{lemma}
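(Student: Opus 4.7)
The plan is to differentiate directly, substitute, and manipulate commutators. The product rule gives
\begin{equation*}
\frac{\ud}{\ud t}\int f_N \langle Cu, C'u\rangle \ud Z = \int (\partial_t f_N) \langle Cu, C'u\rangle \ud Z + \int f_N \langle C\partial_t u, C'u\rangle \ud Z + \int f_N \langle Cu, C'\partial_t u\rangle \ud Z.
\end{equation*}
For the first term I invoke the Liouville equation together with Fokker--Planck duality, $\int \partial_t f_N\,\Phi \, \ud Z = \int f_N L_N^* \Phi \, \ud Z$, where $L_N^* = B + \sigma \Delta_V$ is the generator. Since $B$ is a first-order derivation and $\Delta_V$ obeys the usual product rule on the scalar $\Phi = \langle Cu, C'u\rangle$, this produces $\int f_N \langle L_N^* Cu, C'u\rangle + \int f_N \langle Cu, L_N^* C'u\rangle + 2\sigma \int f_N \langle ACu, AC'u\rangle$. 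For the $C$-side remainder I then substitute $\partial_t u = -Bu - \sigma A^*Au + \sigma \nabla_V \log f_N \cdot Au - \overline{R}_N$ from Lemma \ref{lemma:Eq-u}; the same treatment applies symmetrically on the $C'$-side.

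Collecting the $B$-pieces yields the commutators $[B, C]u$ and $[B, C']u$ as stated, and the $\overline{R}_N$-pieces appear directly. For the diffusion pieces, I combine $\sigma \Delta_V Cu - \sigma C A^*A u$ using $\Delta_V = -A^*A - \nabla_V \log \bar{f_N} \cdot A$ and the identity $[C, A^*A]u = [C, A^*]Au + A^*[C, A]u$, which gives $-2\sigma A^*A Cu - \sigma [C, A^*]Au - \sigma A^*[C, A]u - \sigma \nabla_V \log \bar{f_N} \cdot A Cu$. I then move the surviving $A^*$ factors onto $C'u$ via the elementary identity
\begin{equation*}
\int f_N (A^*g) \cdot h \, \ud Z = \int f_N g \cdot Ah \, \ud Z + \int f_N (g \cdot Au)\, h \, \ud Z,
\end{equation*}
which follows from $f_N = \bar{f_N}\, e^u$ and the fact that $A^* = -\mathrm{div}_V - \nabla_V \log \bar{f_N}\cdot$ is the $L^2(\bar{f_N})$-adjoint of $A$. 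This generates the principal dissipation term $-2\sigma \int f_N \langle ACu, AC'u\rangle$ together with $Au$-weighted cross terms that eventually populate $Q_{C,A}$.

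The critical step is the handling of the nonlinear term $\sigma C(\nabla_V \log f_N \cdot Au)$. The split $\nabla_V \log f_N = \nabla_V \log \bar{f_N} + Au$ produces two pieces. The first, $\sigma C(\nabla_V \log \bar{f_N} \cdot Au) = \sigma \nabla_V \log \bar{f_N} \cdot A Cu - \sigma [\nabla_V \log \bar{f_N} \cdot A, C]u$, contributes one summand of $Q_{C,A}$ directly and cancels the residual $\sigma \nabla_V \log \bar{f_N} \cdot A Cu$ left over from $\sigma \Delta_V Cu$. The second, using that $C$ is a first-order derivation, gives $\sigma C(|Au|^2) = 2\sigma Au \cdot CAu = 2\sigma Au \cdot ACu + 2\sigma Au \cdot [C, A]u$; the first summand cancels an $\langle ACu, Au \otimes C'u\rangle$ term from the preceding integration by parts, while the second combines with the analogous cross term from $\sigma A^*[C, A]u$ to form the $\langle [C, A]u, Au \otimes C'u\rangle$ entry of $Q_{C,A}$. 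Finally, the passage from the naturally occurring $\langle ACu, AC'u\rangle$ to the stated $\langle CAu, C'Au\rangle$ is pure algebra: expanding $CAu = ACu + [C, A]u$ and the analog for $C'$ produces extra terms which are exactly absorbed into the $-\int f_N \langle [A, C]u, C'Au\rangle$ and $-\int f_N \langle CAu, [A, C']u\rangle$ entries of $Q_{C, A}$ and $Q_{C', A}$ respectively.

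The main obstacle is entirely bookkeeping. No analytic estimate is used; every step is either the Leibniz rule, the single commutator identity $[C, A^*A] = [C, A^*]A + A^*[C, A]$, or the one integration-by-parts formula above. The difficulty lies in tracking the four distinct cross-term families $\langle ACu, AC'u\rangle$, $\langle ACu, Au\otimes C'u\rangle$, $\langle [C, A]u, AC'u\rangle$, and $\langle [C, A]u, [C', A]u\rangle$ that arise from three distinct sources, and in verifying that their signs and coefficients collapse exactly as asserted. By symmetry under $C \leftrightarrow C'$, it suffices to carry out the computation on the $C$-side and read off the $C'$-side from $Q_{C', A}$.
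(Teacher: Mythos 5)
Your proposal is correct and takes essentially the same approach as the paper's proof: differentiate by the product rule, substitute the Liouville adjoint for $\partial_t f_N$ and the evolution equation for $u$ from Lemma~\ref{lemma:Eq-u}, extract $[B,C]$ and $[B,C']$, and then reorganize the diffusion pieces via commutators and integration by parts against $\bar f_N$. Keeping $\Delta_V$ as an operator and invoking $\Delta_V = -A^*A - \nabla_V\log\bar f_N\cdot A$ together with $[C,A^*A] = [C,A^*]A + A^*[C,A]$ up front is a cosmetic repackaging of the paper's step of integrating $\Delta_V$ by parts against $\nabla_V\log f_N = Au + \nabla_V\log\bar f_N$ on the measure side and then tracking the resulting tensor contractions term by term; both routes produce the same four cross-term families and the same cancellation pattern you describe.
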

\begin{remark}\label{remark:notations}
$B$ is a $d$-tuple differential operator, but $C, C'$ are $2Nd$-tuple differential operators. We denote $C_i$ and $C_i'$ as $(C_{i_j})_{1 \leq i \leq N, 1 \leq j \leq d}$ and $(C'_{i_j})_{1 \leq i \leq N, 1 \leq j \leq d}$ in the sense of coordinate $z_j = (x_j,v_j)$. We omit the index $j$ for convenience in the following, i.e. $C = (C_i)_{1 \leq i \leq N}$ and $C' = (C'_i)_{1 \leq i \leq N}$. Each of them can be identified with a vector field $D_{i}$, in such a way that $C_i f = D_i \cdot \nabla f$, so $D$ can be seen as a map valued in $(2Nd \times 2Nd)$ matrix. The inner products above should be understood as $\z Cu, C'u\y = \sum_{i=1}^N \z C_i u, C_i'u \y_{\R^{2d}}$, $\z CAu, C'Au\y = \sum_{i,j=1}^N \z C_i A_ju, C_i'A_ju \y_{\R^{2d \times 2d}}$.
\end{remark}

\begin{remark}\label{remark:commutators}
Let us explain the commutators we use. $[B,C]$ is a $2Nd$-tuple operator, understood as $[B,C]_{i_j} = [B, C_{i_j}]$. But $[C,A]$ is a operator with $2Nd \times 2Nd$ components, understood as $[C,A]_{ij} = [C_i,A_j]$, and others follow. If $C, C'$ are commutative with $A$, the only nontrivial operators are $[C,A^{\ast}]$ and $[\nabla_V \log \bar{f_N} \cdot A, C]$, we will compute them later.  
\end{remark}

\begin{proof}
$Step1.$ In this step, we claim
\begin{equation}\label{Step1:result}
\begin{split}
\frac{d}{dt} \int f_N \z Cu, C'u \y = & \int f_N \langle [B, C]u, C'u \rangle + \int f_N \langle Cu, [B, C']u \rangle  \\ & -  \int f_N \langle  C \overline{R}_N, C'u \rangle  - \int f_N \langle  Cu, C'\overline{R}_N \rangle + \sigma Q_{\sigma},
\end{split}
\end{equation}
here $Q_{\sigma}$ collects all terms with coefficient $\sigma$ and reads as
\begin{equation}\label{Step1:diffusion}
\begin{split}
Q_{\sigma} = & \int f_N \Delta_V \z Cu, C'u \y \\ & - \int f_N \z CA^{\ast}A u, C'u\y + \int f_N \z C(\nabla_V \log f_N \cdot Au), C'u\y \\ & - \int f_N \z Cu, C'A^{\ast}A u\y + \int f_N \z Cu, C'(\nabla_V \log f_N \cdot Au)\y.
\end{split}
\end{equation}
There terms comes from diffusion part of Eq.\eqref{Eq:Liouville equation} and Eq.\eqref{Eq:limiting equation}, we will deal with them in next step.

We directly take derivative and split into three terms:
\begin{equation}\label{Step1:1}
\begin{split}
\frac{d}{dt} \int f_N \langle Cu, C'u\rangle  = & \int \partial_t f_N \langle Cu, C'u\rangle  \\ & + \int f_N \langle C(\partial_t u), C'u \rangle + \int f_N \langle Cu, C'(\partial_t u)\rangle.
\end{split}
\end{equation}
For the first term, we use Eq.\eqref{Eq:Liouville equation} and integral by parts,
\begin{equation}\label{step1:1-1}
\begin{split}
\int \partial_t f_N \langle Cu, C'u \rangle = &
\int -L_N f_N \langle Cu, C'u\rangle \\ = & \int f_N (B \langle Cu, C'u\rangle) + \sigma \int f_N (\Delta_V \langle Cu, C'u\rangle) \\ = & \int f_N \z BCu, C'u\y + \int f_N \z Cu, BC'u\y + \sigma \int f_N (\Delta_V \langle Cu, C'u\rangle).
\end{split}
\end{equation}
For second term,
\begin{equation}\label{step1:1-2}
\begin{split}
\int f_N \langle C(\partial_t u), C'u \rangle = & - \int f_N \langle C(B u), C'u \rangle - \int f_N \z CR_N, C'u \y \\ & - \sigma \int f_N \z CA^{\ast}A u, C'u\y + \sigma \int f_N \z C(\nabla_V \log f_N \cdot Au), C'u\y.
\end{split}
\end{equation}
Similarly, for third term,
\begin{equation}\label{step1:1-3}
\begin{split}
\int f_N \langle Cu, C'(\partial_t u) \rangle = & - \int f_N \langle Cu, C'(Bu) \rangle - \int f_N \z Cu, C'R_N \y \\ & - \sigma \int f_N \z Cu, C'A^{\ast}A u\y + \sigma \int f_N \z Cu, C'(\nabla_V \log f_N \cdot Au)\y.
\end{split}
\end{equation}
Combine \eqref{step1:1-1}, \eqref{step1:1-2}, \eqref{step1:1-3}, we complete the claim in this step.\\

$Step2.$ In this step, we deal with the diffusion part, we claim 

\begin{equation}
Q_{\sigma} =
- 2\int f_N \z CAu, C'Au \y + Q_{C,A} + Q_{C',A},
\end{equation}
where
\begin{equation}
\begin{split}
Q_{C,A} = & -\int f_N \z [C,A]u, Au \otimes C'u \y - \int f_N \z [A,C]u, C'Au \y  \\ & -\int f_N \z [C,A^{\ast}]Au, C'u\y 
- \int f_N \z [\nabla_V \log \bar{f_N} \cdot A, C]u, C'u \y,
\end{split}
\end{equation}

\begin{equation}
\begin{split}
Q_{C',A} = & - \int f_N \z Au \otimes Cu, [C',A]u\y - \int f_N \z CAu, [A, C']u \y \\ & -\int f_N \z Cu, [C',A^{\ast}]Au\y - \int f_N \z Cu, [\nabla_V \log \bar{f_N} \cdot A, C']u \y.
\end{split}
\end{equation}
The terms $Q_{C,A}$ and $Q_{C',A}$ collect all terms including commutators, we will take suitable operators $C$ and $C'$ to simplify these commutators.

For the first term of \eqref{Step1:diffusion}, we use integral by parts with respect to Lebesgue measure,   
\begin{equation}\label{step2:1-1}
\begin{split}
& \int \Delta_V f_N \z Cu, C'u \y \\ = & - \int f_N A u  \cdot A \z Cu, C'u \y \underline{ - \int f_N \nabla_V \log \bar{f_N} \cdot A \z Cu, C'u\y }.
\end{split}
\end{equation}
Recall we denote $A = (0,\nabla_V)$, we continue the first term of \eqref{step2:1-1}
\begin{equation}\label{Step2:1-2}
\begin{split}
& - \int f_N A u  \cdot A \z Cu, C'u \y \\ = & -\int f_N \z ACu, Au \otimes C'u \y - \int f_N \z Au \otimes Cu, AC'u\y \\ = & -\int f_N \z CAu, Au \otimes C'u \y - \int f_N \z Au \otimes Cu, C'Au\y \\ & -\int f_N \z [C,A]u, Au \otimes C'u \y - \int f_N \z Au \otimes Cu, [C',A]u\y.
\end{split}
\end{equation}
Let us explain the notation we use: $\z ACu, Au \otimes C'u \y$ should be understood as 
\[ \sum_{i,j=1}^{N}\left\z C_iA_ju, \z A_ju, C'_i u\y \right \y, \] 
and 
\[ \z [C,A]u, Au \otimes C'u \y = \sum_{i,j=1}^N \z [C_i,A_j]u, \z A_ju, C'_iu \y \y. \]
Moreover, we take the conjugate operator of $[C,A]$ w.r.t measure $\bar{f_N}$, we have
\begin{equation}\label{Step2:1-conjugation_rule}
\begin{split}
-\int f_N \z [C,A]u, Au \otimes C'u \y = &  - \int f_N  [C,A]^{\ast} \z Au \otimes C'u \y \\ = & - \sum_{i,j=1}^N \int f_N  [C_i,A_j]^{\ast} \z A_ju, C'_iu \y.
\end{split}
\end{equation}
For the second term of \eqref{Step1:diffusion},
we rewrite it as 
\begin{equation*}
\begin{split}
& - \int f_N \z CA^{\ast}A u, C'u\y \\ = & - \int f_N \z A^{\ast}CAu, C'u\y - \int f_N \z [C,A^{\ast}]Au, C'u\y,
\end{split}
\end{equation*}
and
\begin{equation*}
\begin{split}
- \int f_N \z A^{\ast}CAu, C'u\y = & - \int \bar{f_N} \z A^{\ast}CAu, C'h\y \\ = & - \int \bar{f_N} \z CAu, AC'h\y \\ = & - \int \bar{f_N} \z CAu, AhC'u\y \\ = & - \int f_N \z CAu, AC'u \y - \int f_N \z CAu, Au \otimes C'u\y,
\end{split}
\end{equation*}
then we have
\begin{equation}\label{Step2:1-3}
\begin{split}
- \int f_N \z CA^{\ast}A u, C'u\y = & - \int f_N \z CAu, C'Au \y - \int f_N \z CAu, [A, C']u \y \\ & -\int f_N \z [C,A^{\ast}]Au, C'u\y - \int f_N \z CAu, Au \otimes C'u\y.
\end{split}
\end{equation}
Similarly, for the fourth term of \eqref{Step1:diffusion}, up to the exchange of $C$ and $C'$, we have
\begin{equation}\label{Step2:1-4}
\begin{split}
- \int f_N \z Cu, C'A^{\ast}Au\y = & - \int f_N \z CAu, C'Au \y - \int f_N \z [A,C]u, C'Au \y \\ & -\int f_N \z Cu, [C',A^{\ast}]Au\y - \int f_N \z Au \otimes Cu, C'Au\y.
\end{split}
\end{equation}
Finally, all of left terms are third and fifth term in \eqref{Step1:diffusion} and underlined term in \eqref{step2:1-1}, we collect them as below, 
\begin{equation}\label{Step2:2-1}
\begin{split}
& - \int f_N \nabla_V \log \bar{f_N} \cdot A \z Cu, C'u\y \\ & + \int f_N \z C(\nabla_V \log f_N \cdot Au), C'u\y \\ & + \int f_N \z Cu, C'(\nabla_V \log f_N \cdot Au)\y.
\end{split}
\end{equation}
We deal with the first term of \eqref{Step2:2-1} as following,
\begin{equation}\label{Step2:2-2}
\begin{split}
& - \int f_N \nabla_V \log \bar{f_N} \cdot A \z Cu, C'u\y \\ = & - \int f_N \z \nabla_V \log \bar{f_N} \cdot A (Cu), C'u\y - \int f_N \z Cu, \nabla_V \log \bar{f_N} \cdot A (C'u)\y \\ = & - \int f_N \z C(\nabla_V \log \bar{f_N} \cdot Au), C'u\y - \int f_N \z [\nabla_V \log \bar{f_N} \cdot A, C]u, C'u \y \\ & - \int f_N \z Cu, C'(\nabla_V \log \bar{f_N} \cdot A u)\y - \int f_N \z Cu, [\nabla_V \log \bar{f_N} \cdot A, C']u \y,
\end{split}
\end{equation}
combine with the last two term of \eqref{Step2:2-1}, we have
\begin{equation}\label{Step2:2-3}
\begin{split}
& - \int f_N \nabla_V \log \bar{f_N} \cdot A \z Cu, C'u\y \\ & + \int f_N \z C(\nabla_V \log f_N \cdot Au), C'u\y \\ & + \int f_N \z Cu, C'( \nabla_V \log f_N \cdot Au)\y \\ = & \int f_N \z C|Au|^2, C'u\y + \int f_N \z Cu, C'|Au|^2\y \\  & - \int f_N \z [\nabla_V \log \bar{f_N} \cdot A, C]u, C'u \y - \int f_N \z Cu, [\nabla_V \log \bar{f_N} \cdot A, C']u \y
\\ = & 2\int f_N \z CAu, Au \otimes C'u\y + 2\int f_N \z Au \otimes Cu, C'Au\y \\  & - \int f_N \z [\nabla_V \log \bar{f_N} \cdot A, C]u, C'u \y - \int f_N \z Cu, [\nabla_V \log \bar{f_N} \cdot A, C']u \y.
\end{split}
\end{equation}
After gathering \eqref{Step2:1-2}, \eqref{Step2:1-3}, \eqref{Step2:1-4} and \eqref{Step2:2-3}, we find
\begin{equation}\label{Step2:2-4}
\begin{split}
& - 2\int f_N \z CAu, C'Au \y 
\\ &-\int f_N \z [C,A]u, Au \otimes C'u \y - \int f_N \z Au \otimes Cu, [C',A]u\y 
\\ & - \int f_N \z [A,C]u, C'Au \y - \int f_N \z CAu, [A, C']u \y \\ & -\int f_N \z [C,A^{\ast}]Au, C'u\y -\int f_N \z Cu, [C',A^{\ast}]Au\y 
 \\ & - \int f_N \z [\nabla_V \log \bar{f_N} \cdot A, C]u, C'u \y - \int f_N \z Cu, [\nabla_V \log \bar{f_N} \cdot A, C']u \y.
\end{split}
\end{equation}
Based on the conjugate rule \eqref{Step2:1-conjugation_rule} for all similar terms, we complete the proof.
\end{proof}

\begin{corollary}\label{Cor:QCA}
Let $\bar{f}_N = f^{\otimes N}$, if $A$ is commutative with $C$ and $C'$, we have
\begin{equation}
Q_{C,A} = 2 \sum_{i=1}^N \int f_N (C_i \nabla_{v_i} \log f)\z A_iu, C_iu\y_{\R^{2d}},
\end{equation}
and 
\begin{equation}
Q_{C',A} = 2 \sum_{i=1}^N \int f_N (C'_i \nabla_{v_i} \log f)\z A_iu, C'_iu\y_{\R^{2d}}.
\end{equation}
\end{corollary}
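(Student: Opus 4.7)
The plan is to specialize the formula for $Q_{C,A}$ given in Lemma \ref{lemma:Keylemma1} by exploiting both the commutativity hypothesis and the tensor-product structure $\bar{f}_N=f^{\otimes N}$. The claim for $Q_{C',A}$ will follow by symmetry after swapping the roles of $C$ and $C'$.

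First I would use $[C,A]=[A,C]=0$ to kill the third and fourth integrals in the definition \eqref{Eq:QCA} of $Q_{C,A}$, reducing it to
\[
Q_{C,A} \;=\; -\int f_N\,\z [C,A^\ast]Au,\,C'u\y \,-\, \int f_N\,\z[\nabla_V\log\bar{f_N}\cdot A,\,C]u,\,C'u\y.
\]
Both remaining terms are driven by a single object: the $C$-derivative of $\nabla_V\log\bar{f_N}$. Writing $C=(C_i)_{i=1}^N$ componentwise, I would recall from the conjugation rule \eqref{conjugation} that $A^\ast g = -A\cdot g-\nabla_V\log\bar{f_N}\cdot g$. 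Since $C_i$ commutes with $A=(0,\nabla_V)$, the divergence part of $A^\ast$ passes through $C_i$ unchanged, and only the multiplicative factor $\nabla_V\log\bar{f_N}$ survives under the Leibniz rule, yielding
\[
[C_i,A^\ast]g \;=\; -(C_i\nabla_V\log\bar{f_N})\cdot g,\qquad \text{so}\qquad [C_i,A^\ast]Au = -(C_i\nabla_V\log\bar{f_N})\cdot\nabla_V u.
\]

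Next I would run the parallel calculation for the second commutator. Applying the Leibniz rule to $C_i(\nabla_V\log\bar{f_N}\cdot Au)$ and using $[C_i,A]=0$ gives
\[
[\nabla_V\log\bar{f_N}\cdot A,\,C_i]u \;=\; -(C_i\nabla_V\log\bar{f_N})\cdot\nabla_V u.
\]
Thus the two remaining integrals are identical, producing the factor of $2$ in the final statement.

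Finally, I would invoke the product structure $\bar{f}_N = f^{\otimes N}$, which gives $\nabla_V\log\bar{f_N}=(\nabla_{v_j}\log f(z_j))_{j=1}^N$. Since each $C_i$ acts only in the $i$-th particle variable $z_i$ (the standing convention implicit in the hypocoercivity ansatz, where $C=\sqrt{M}\nabla$ with $M$ a block-diagonal matrix), only the diagonal $j=i$ term in $C_i\nabla_V\log\bar{f_N}$ is nonzero, leaving precisely $C_i\nabla_{v_i}\log f$ paired with $\nabla_{v_i}u=A_iu$. Rewriting everything per-particle and using the $\R^{2d}$ inner product yields the claimed formula. The argument for $Q_{C',A}$ is verbatim the same with $C\leftrightarrow C'$.

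The main difficulty is not conceptual but bookkeeping: each $C_i$ is itself a $2d$-tuple of first-order operators, so $C_i\nabla_{v_i}\log f$ is naturally a $(2d)\times d$ matrix-valued quantity, and one must carefully track how this matrix pairs with the $d$-vector $\nabla_{v_i}u$ on one side and with the $2d$-vector $C_i u$ (or $C'_i u$) in the $\R^{2d}$ inner product on the other. Once the index conventions are fixed, the two commutator computations themselves are short product-rule manipulations that rely only on $[C,A]=0$ and the explicit form of $A^\ast$.
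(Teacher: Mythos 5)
Your proposal is correct and follows essentially the same route as the paper's proof: both drop the two terms killed by $[C,A]=0$, compute $[C_i,A^\ast]$ and $[\nabla_V\log\bar f_N\cdot A,\,C_i]$ by the Leibniz rule using the explicit form of $A^\ast$, observe that both give $-(C_i\nabla_V\log\bar f_N)\cdot \nabla_V u$ (hence the factor $2$), and finally use $\bar f_N=f^{\otimes N}$ together with $C_i$ acting only in $z_i$ to reduce to the diagonal term $C_i\nabla_{v_i}\log f$. The only caveat is that the hypothesis that $C_i$ acts only on the $i$-th particle variable, which you correctly identify as essential and supply as part of the argument, is left implicit in the paper's statement; your making it explicit is a small improvement in rigor, not a new idea.
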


\begin{proof}
We recall that
\begin{equation}\label{Cor:QCA-term}
Q_{C,A} = -\int f_N \z [C,A^{\ast}]Au, C'u\y 
- \int f_N \z [\nabla_V \log \bar{f_N} \cdot A, C]u, C'u \y.
\end{equation}
For the second term of \eqref{Cor:QCA-term}, we direct compute the commutator $[\nabla_V \log \bar{f_N} \cdot A, C]$, i.e.
\begin{equation*}
\begin{split}
& - \int f_N \z [\nabla_V \log \bar{f_N} \cdot A, C]u, C'u \y \\ = & - \sum_{i,j=1}^N \int f_N \z [\nabla_{v_j} \log \bar{f_N} \cdot A_j, C_i]u, C'_i u \y \\ = & \sum_{i,j=1}^N \int f_N \z C_i \nabla_{v_j} \log \bar{f_N}, A_j u \otimes C_i'u\y \\ = &  \sum_{i=1}^N \int f_N (C_i \nabla_{v_i} \log f)\z A_iu, C_iu\y_{\R^{2d}},
\end{split}
\end{equation*}
by $C_i \nabla_{v_j} \log \bar{f_N} = 0$ if $i \neq j$. For the first term of \eqref{Cor:QCA-term}, we understand the commutator $[C,A^{\ast}]$ as the row of operators 
\begin{equation}
[C,A^{\ast}] = ([C_1,A^{\ast}],...,[C_N,A^{\ast}] ),
\end{equation}
then we have
\begin{equation}
\begin{split}
& -\int f_N \z [C,A^{\ast}]Au, C'u\y \\ = & - \sum_{i=1}^N \int f_N \z [C_i,A^{\ast}]Au, C_i'u \y \\ = & - \sum_{i=1}^N \int f_N \z C_i(A^{\ast}Au)- A^{\ast}(C_iAu), C'_iu \y,
\end{split}
\end{equation}
here we understand $C_iAu$ as
a $N$-tuple vector reads as $(C_iA_1u, ... ,C_iA_Nu)$, which can be operated by $A^{\ast}$. Recall 
\begin{equation*}
A^{\ast}g = - A \cdot g - \z \nabla_V \log \bar{f_N}, g \y,
\end{equation*}
then we have
\begin{equation}
\begin{split}
& -\int f_N \z [C,A^{\ast}]Au, C'u\y \\ = &
\sum_{i,j=1}^N \sum_{k,l=1}^d \int f_N \left \{ \z C_{i_l}A_{j_k}A_{j_k}u + C_{i_l}\{(\nabla_{v_{j_k}} \log \bar{f_N}) (A_{j_k}u)\}, C'_{i_l}u\y \right\} \\ & - \sum_{i,j=1}^N \sum_{k,l=1}^d \int f_N \left\{ \z A_{j_k}C_{i_l}A_{j_k}u + \{(\nabla_{v_{j_k}} \log \bar{f_N}) (C_{i_l}A_{j_k}u)\}, C'_{i_l}u \y \right\}
\\ = & \sum_{i=1}^N \int f_N (C_i \nabla_{v_i} \log f)\z A_iu, C'_iu\y_{\R^{2d}}.
\end{split}
\end{equation}
Up to change the position of $C$ and $C'$, we complete the proof.
\end{proof}

\subsection{Entropy multipliers}

In order to deal with more general potentials $V$ and $W$, we develop the method of entropy multipliers for $N$-particle relative Fisher information. We recommend Part 1, section 8 in \cite{Hypocoercivity} and \cite{EntropyMutiplier} for ``one particle version'' without interaction potentials, and they only consider the invariant measure of single particle Fokker Planck equation as reference measure. Now let us consider a $2Nd \times 2Nd$ weight matrix $M(t,Z)$ to distort the relative Fisher Information, i.e.
\begin{equation*}
\frac{1}{N}\int f_N\z Cu, M_t C'u\y dZ,
\end{equation*}
we evolve this quantity along time $t$ in the following lemma, many ideas of manipulation are similar with Lemma \ref{lemma:Keylemma1}.

\begin{lemma}\label{lemma:Keylemma2}
Assume that $f_N$ is a solution of Eq.\eqref{Eq:Liouville equation}. Assume that $f(t,z) \in W^{1, \infty}(\R \times \Omega \times \R^d)$ solves Eq.\eqref{Eq:limiting equation} with $f(t,z) > 0$ and $\int_{\Omega \times \R^d} f(t,z)dz = 1$. Let $B, C, C'$ be differential operators on $(\Omega \times \R^d)^{N}$, where
\begin{equation*}
B = \sum_{i=1}^N B_i, \ \ B_i = v_i \cdot \nabla_{x_i} - \nabla_{x_i}U \cdot \nabla_{v_i} - \gamma v_i \cdot \nabla_{v_i},
\end{equation*}
and $C, C'$ are to be confirmed. Let $M_t(Z): \R \times (\Omega \times \R^d)^N \rightarrow \R^{2Nd \times 2Nd}$ be a matrix valued function smooth enough for all variables, then
\begin{equation}\label{keylemma2}
\begin{split}
\frac{d}{dt} \left\{ \int f_N \langle Cu, M_tC'u\rangle \right\} = & \ \int f_N \langle M_t[B, C]u, C'u \rangle + \int f_N \langle Cu, M_t[B, C']u \rangle  \\ & - \int f_N \langle  C\overline{R}_N, M_tC'u \rangle  - \int f_N \langle  M_tCu, C'\overline{R}_N \rangle \\ & - 2 \sigma \int f_N \z CAu, M_tC'Au \y + \sigma Q_{C,A} + \sigma Q_{C',A} \\ & + \int f_N \z Cu, (\partial_t M_t + BM_t + \sigma \Delta_V M_t)C'u \y,
\end{split}
\end{equation}
where
\begin{equation}\label{Eq:QCA2}
\begin{split}
Q_{C,A} =  & -\int f_N \z [C,A^{\ast}]Au, M_tC'u\y 
- \int f_N \z [\nabla_V \log \bar{f_N} \cdot A, C]u, M_tC'u \y \\ & -\int f_N \z [C,A]u, Au \otimes M_tC'u \y - \int f_N \z M_t[A,C]u, C'Au \y \\ & + \int f_N \z [A,C]u, (AM_t)C'u \y,
\end{split}
\end{equation}

\begin{equation}\label{Eq:QC'A2}
\begin{split}
Q_{C',A} = & -\int f_N \z M_tCu, [C',A^{\ast}]Au\y - \int f_N \z M_tCu, [\nabla_V \log \bar{f_N} \cdot A, C']u \y \\ & - \int f_N \z Au \otimes M_tCu, [C',A]u\y - \int f_N \z CAu, M_t[A, C']u \y\\ & + \int f_N \z (AM_t)Cu, [A,C']u \y.
\end{split}
\end{equation}
\end{lemma}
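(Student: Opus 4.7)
The plan is to mirror the two-step derivation of Lemma \ref{lemma:Keylemma1}, carrying the $Z$- and $t$-dependent matrix $M_t$ through every manipulation and tracking the additional contributions this generates. I would begin by splitting the time derivative as
\begin{equation*}
\frac{d}{dt}\int f_N \z Cu, M_t C'u\y
= \underbrace{\int (\partial_t f_N) \z Cu, M_t C'u\y}_{\text{(i)}}
+ \underbrace{\int f_N \z C(\partial_t u), M_t C'u\y}_{\text{(ii)}}
+ \underbrace{\int f_N \z Cu, M_t C'(\partial_t u)\y}_{\text{(iii)}}
+ \underbrace{\int f_N \z Cu, (\partial_t M_t)C'u\y}_{\text{(iv)}}.
\end{equation*}
Piece (iv) is the new genuinely unweighted-case-absent contribution and is kept aside for the final line of \eqref{keylemma2}.

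For (i), substitute $\partial_t f_N = -L_N f_N$ and integrate by parts. Applying Leibniz to $B\z Cu, M_t C'u\y$ and $\Delta_V\z Cu, M_t C'u\y$ produces, beyond the terms already present in \eqref{step1:1-1}, exactly the extra pieces $\int f_N \z Cu, (BM_t)C'u\y$ and $\sigma \int f_N \z Cu, (\Delta_V M_t)C'u\y$, which feed the final line of \eqref{keylemma2}. For (ii) and (iii), I would substitute $\partial_t u$ from Lemma \ref{lemma:Eq-u}. The $-Bu$ contributions combine with the $B$ action in (i) through commutators to produce the $\z M_t[B,C]u,C'u\y$ and $\z Cu, M_t[B,C']u\y$ terms, while the $-\overline{R}_N$ pieces yield the two error integrals with $M_t$ inserted.

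The second step is the diffusive bookkeeping, which exactly replays Step 2 of Lemma \ref{lemma:Keylemma1} with $M_t$ sitting between the two factors in every pairing. I would combine the Laplacian contribution from (i) with the $-\sigma CA^\ast Au$, $-\sigma C'A^\ast Au$ and $\sigma \nabla_V \log f_N \cdot Au$ contributions from (ii) and (iii); the conjugation rule \eqref{conjugation} remains valid in the weighted pairing because $M_t$ commutes with multiplication by functions, so the reorganization carries over verbatim, yielding the main dissipation term $-2\sigma \int f_N \z CAu, M_t C'Au\y$ and the commutator collections $Q_{C,A}$, $Q_{C',A}$. The new feature is that whenever $A$ is commuted past $M_t C' u$ or $M_t Cu$ one uses $A(M_t w) = (AM_t)w + M_t(Aw)$, which extracts precisely the additional pieces $\int f_N \z [A,C]u,(AM_t)C'u\y$ and $\int f_N \z (AM_t)Cu,[A,C']u\y$ listed in \eqref{Eq:QCA2}--\eqref{Eq:QC'A2}.

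The main bookkeeping obstacle is exactly in this second step: ensuring that all terms in which $A$ crosses $M_t$ are collected without double-counting and that no spurious $(AM_t)$-type residues survive outside of $Q_{C,A}$ and $Q_{C',A}$. The natural strategy is to fix a canonical order for each commutation (first move $A$ past $C$ or $C'$, then past $M_t$), which makes each produced $(AM_t)$-contribution identifiable with a single entry of \eqref{Eq:QCA2} or \eqref{Eq:QC'A2}. Once this bookkeeping is complete, the leftover pieces $\partial_t M_t$, $BM_t$ and $\sigma \Delta_V M_t$ combine into the last line of \eqref{keylemma2}, completing the identity.
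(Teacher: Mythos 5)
Your plan follows the paper's proof route: the same four-way time-derivative split (with the $\partial_t M_t$ term isolated), substitution of $\partial_t u$ from Lemma \ref{lemma:Eq-u}, and a second step in which $M_t$ is carried through the diffusive bookkeeping so that $A$-past-$M_t$ commutations produce the $(AM_t)$ terms in $Q_{C,A}$ and $Q_{C',A}$. One inaccuracy worth fixing: the claim that applying Leibniz to $\Delta_V \z Cu, M_t C'u \y$ inside piece (i) produces ``exactly'' the piece $\sigma \int f_N \z Cu, (\Delta_V M_t) C'u\y$ is not right, since a direct Leibniz expansion of the Laplacian also generates mixed terms of the form $\int f_N \z ACu, (AM_t)C'u\y$ and its symmetric companion. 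In the paper's argument the $\Delta_V M_t$ contribution only emerges after the Step 2 reorganization (cf.\ the computation starting from \eqref{only first term}), precisely because those cross terms are merged with the $(AM_t)$ residues coming from the $-\sigma C A^\ast A u$ and $\sigma \nabla_V\log f_N\cdot Au$ contributions; one should keep the Laplacian piece intact in $Q_\sigma$ and only isolate $\Delta_V M_t$ at the end of that bookkeeping.
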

\begin{remark}
The notations of inner product and commutator operators appeared above are the same as in Remark \ref{remark:notations} and \ref{remark:commutators}. Let us explain the notations associated with $M_t$ we used above. We denote $Cu,[B,C]u, C\overline{R}_N$ are $Nd$-tuple vectors, then it is reasonable to multiply $M_t$ with them. $\z CAu, M_tC'Au \y$ should be understood as
\begin{equation*}
\sum_{i,j,l=1}^N \z C_i A_l u, M_{i j}C'_{j} A_{l}u\y_{\R^{2d}}
\end{equation*}
in the sense of coordinate $z_j \in \Omega \times \R^d$. The derivative of $M_t$ (i.e. $\partial_t M_t, AM_t, BM_t, \Delta_V M_t$) is taken componentwise. 
\end{remark}

\begin{proof}
We use the similar argument with Lemma \ref{lemma:Keylemma1}. We directly take derivative and obtain,  
\begin{equation}\label{MStep1:1}
\begin{split}
\frac{d}{dt} \int f_N \langle Cu, M_tC'u\rangle  = & \int \partial_t f_N \langle Cu, M_tC'u\rangle  \\ & + \int f_N \langle C(\partial_t u), M_tC'u \rangle + \int f_N \langle Cu, M_tC'(\partial_t u)\rangle \\ & + \int f_N \z Cu, (\partial_t M_t)C'u\y.
\end{split}
\end{equation}
Observe that the last term
\begin{equation}
\int f_N \z Cu, (\partial_t M_t)C'u\y
\end{equation}
is new. Next we use the similar step in Lemma \ref{lemma:Keylemma1} to go on. \\

$Step1.$ In this step, we claim the following
\begin{equation}\label{Mstep1:result}
\begin{split}
\frac{d}{dt} \int f_N \z Cu, M_tC'u \y = & \int f_N \langle [B, C]u, M_tC'u \rangle + \int f_N \langle Cu, M_t[B, C']u \rangle  \\ & -  \int f_N \langle  CR_N, M_tC'u \rangle  - \int f_N \langle  Cu, M_tC'R_N \rangle + \sigma Q_{\sigma} \\ & + \int f_N
\z Cu, (\partial_t M_t + BM_t)C'u\y,
\end{split}
\end{equation}
where $Q_{\sigma}$ collects all terms with coefficient $\sigma$, which reads as
\begin{equation}\label{MStep1:diffusion}
\begin{split}
Q_{\sigma} = & \int f_N \Delta_V \z Cu, M_tC'u \y \\ & - \int f_N \z CA^{\ast}A u, M_tC'u\y + \int f_N \z C(\nabla_V \log f_N \cdot Au), M_tC'u\y \\ & - \int f_N \z Cu, M_tC'A^{\ast}A u\y + \int f_N \z Cu, M_tC'(\nabla_V \log f_N \cdot Au)\y.
\end{split}
\end{equation}

\begin{remark}
Let us explain some notations we use. Since $[B,C']u$, $C'(A^{\ast}Au)$ and $C'(\nabla_V \log f_N \cdot Au)$ are all $2Nd$-tuple vectors, it is reasonable to multiple them with $M_t$. After that, $M_t[B,C']u$, $M_tC'(A^{\ast}Au)$ and $M_t C'(\nabla_V \log f_N \cdot Au)$ are $2Nd$-tuple vectors.    
\end{remark}

For the first term of \eqref{MStep1:1}, recall Liouville operator \eqref{Liouville operator} we have 
\begin{equation}\label{MStep1:1-1}
\begin{split}
\int \partial_t f_N \langle Cu, M_tC'u \rangle = &
\int -L_N f_N \langle Cu, M_tC'u\rangle \\ = & \int f_N (B \langle Cu, M_tC'u\rangle) + \sigma \int f_N (\Delta_V \langle Cu, M_tC'u\rangle) \\ = & \int f_N \z BCu, M_tC'u\y + \int f_N \z Cu, M_tBC'u\y + \sigma \int f_N (\Delta_V \langle Cu, M_tC'u\rangle) \\ & + \int f_N \z Cu,(BM_t)C'u\y.
\end{split}
\end{equation}
For the second term and third term of \eqref{MStep1:1}, using Eq.(\ref{Eq:u}), we have
\begin{equation}\label{MStep1:1-2}
\begin{split}
\int f_N \langle C(\partial_t u), M_tC'u \rangle = & - \int f_N \langle C(B u), M_tC'u \rangle - \int f_N \z CR_N, M_tC'u \y \\ & - \sigma \int f_N \z CA^{\ast}A u, M_tC'u\y + \sigma \int f_N \z C(\nabla_V \log f_N \cdot Au), M_tC'u\y.
\end{split}
\end{equation}

\begin{equation}\label{MStep1:1-3}
\begin{split}
\int f_N \langle Cu, M_tC'(\partial_t u) \rangle = & - \int f_N \langle Cu, M_tC'(Bu) \rangle - \int f_N \z Cu, M_tC'R_N \y \\ & - \sigma \int f_N \z Cu, M_tC'A^{\ast}A u\y + \sigma \int f_N \z Cu, M_tC'(\nabla_V \log f_N \cdot Au)\y,
\end{split}
\end{equation}
Combine terms \eqref{MStep1:1-1},\eqref{MStep1:1-2} and \eqref{MStep1:1-3}, we obtain the \eqref{Mstep1:result}. \\

$Step2.$ In this step, we focus on the term $Q_{\sigma}$ as before, whose terms all come from diffusion. Now we claim,
\begin{equation}
\begin{split}
Q_{\sigma} = & 
- 2\int f_N \z CAu, M_tC'Au \y + Q_{C,A} + Q_{C',A} \\ & + \int f_N \z Cu, (\Delta_V M_t)C'u \y,
\end{split}
\end{equation}
where
\begin{equation}
\begin{split}
Q_{C,A} =  & -\int f_N \z [C,A^{\ast}]Au, M_tC'u\y 
- \int f_N \z [\nabla_V \log \bar{f_N} \cdot A, C]u, M_tC'u \y \\ & -\int f_N \z [C,A]u, Au \otimes M_tC'u \y - \int f_N \z M_t[A,C]u, C'Au \y \\ & + \int f_N \z [A,C]u, (AM_t)C'u \y,
\end{split}
\end{equation}

\begin{equation}
\begin{split}
Q_{C',A} = & -\int f_N \z M_tCu, [C',A^{\ast}]Au\y - \int f_N \z M_tCu, [\nabla_V \log \bar{f_N} \cdot A, C']u \y \\ & - \int f_N \z Au \otimes M_tCu, [C',A]u\y - \int f_N \z CAu, M_t[A, C']u \y\\ & + \int f_N \z (AM_t)Cu, [A,C']u \y,
\end{split}
\end{equation}
The terms $Q_{C,A}$ and $Q_{C',A}$ collect all terms including commutators, we will take suitable operators $C$ and $C'$ to simplify these commutators.

For the first term of \eqref{MStep1:diffusion}, using integral by parts with respect to Lebesgue measure,   
\begin{equation}\label{Mstep2:2-1}
\begin{split}
& \int \Delta_V f_N \z Cu, M_tC'u \y \\ = & - \int f_N A u  \cdot A \z Cu, M_tC'u \y  \underline{ - \int f_N \nabla_V \log \bar{f_N} \cdot A \z Cu, M_tC'u\y }.
\end{split}
\end{equation}
Recall we denote $A = (0, \nabla_V)$, we continue the first term of \eqref{Mstep2:2-1} 
\begin{equation}
\begin{split}
& - \int f_N A u  \cdot A \z Cu, M_tC'u \y \\ = & -\int f_N \z ACu, Au \otimes M_tC'u \y - \int f_N \z Au \otimes M_tCu, AC'u\y \\ & - \int f_N Au \cdot \z Cu, (AM_t)C'u\y \\ = & -\int f_N \z CAu, Au \otimes M_tC'u \y - \int f_N \z Au \otimes M_tCu, C'Au\y \\ & + \int f_N \z [C,A]u, Au \otimes M_tC'u \y + \int f_N \z Au \otimes M_tCu, [C',A]u\y
\\ & - \int f_N Au \cdot \z Cu, (AM_t)C'u\y.
\end{split}
\end{equation}
For the second terms of \eqref{Mstep2:2-1}, we  rewrite it as
\begin{equation}\label{Mstep2:2-2}
\begin{split}
& - \int f_N \z CA^{\ast}A u, M_tC'u\y \\ = & - \int f_N \z A^{\ast}CAu,M_tC'u\y - \int f_N \z [C,A^{\ast}]Au, M_tC'u\y,
\end{split}
\end{equation}
and we continue the first term of \eqref{Mstep2:2-2}, 
\begin{equation*}
\begin{split}
& - \int f_N \z A^{\ast}CAu, M_tC'u\y \\ = & - \int \bar{f_N} \z A^{\ast}CAu, M_tC'h\y \\ = & - \int \bar{f_N} \z CAu, M_t(AC'h)\y - \int \bar{f_N} \z CAu, (AM_t)C'h \y \\ = & - \int f_N \z CAu, M_t(AC'u) \y - \int \bar{f_N} \z CAu, Ah \otimes M_tC'u\y - \int \bar{f_N} \z CAu, (AM_t)C'h \y \\ = & - \int f_N \z CAu, M_t(AC'u) \y - \int f_N \z CAu, Au \otimes M_tC'u\y - \int f_N \z CAu, (AM_t)C'u \y,
\end{split}
\end{equation*}
now we have
\begin{equation*}
\begin{split}
- \int f_N \z CA^{\ast}A u, M_tC'u\y = & - \int f_N \z CAu, M_t(C'Au) \y - \int f_N \z CAu, M_t[A, C']u \y \\ & -\int f_N \z [C,A^{\ast}]Au, M_tC'u\y - \int f_N \z CAu, Au \otimes M_tC'u\y \\ & - \int f_N \z CAu, (AM_t)C'u \y.
\end{split}
\end{equation*}
Similarly, for the fourth term of \eqref{MStep1:diffusion}, since $M_t$ is symmetric, using its conjugation in $\R^{dN}$, 
\begin{equation*}
- \int f_N \z Cu, M_tC'A^{\ast}Au\y = - \int f_N \z M_tCu, C'A^{\ast}Au\y,
\end{equation*}
up to the exchange of $C$ and $C'$, we have
\begin{equation}\label{Mstep24}
\begin{split}
- \int f_N \z Cu, M_tC'A^{\ast}Au\y = & - \int f_N \z M_t(CAu), C'Au \y - \int f_N \z M_t[A,C]u, C'Au \y \\ & -\int f_N \z M_tCu, [C',A^{\ast}]Au\y - \int f_N \z Au \otimes M_tCu, C'Au\y \\ & - \int f_N \z (AM_t)Cu, C'Au \y.
\end{split}
\end{equation}
Finally, all of left terms are third and fifth term in \eqref{MStep1:diffusion}, and underlined term in \eqref{Mstep2:2-1}, we collect them as below,
\begin{equation}\label{Mstepleft}
\begin{split}
& - \int f_N \nabla_V \log \bar{f_N} \cdot A \z Cu, M_tC'u\y \\ & + \int f_N \z C(\nabla_V \log f_N \cdot Au), M_tC'u\y \\ & + \int f_N \z M_tCu, C'(\nabla_V \log f_N \cdot Au)\y.
\end{split}
\end{equation}
We deal with the first term of (\ref{Mstepleft}) as following,
\begin{equation}\label{Mstep26}
\begin{split}
& - \int f_N \nabla_V \log \bar{f_N} \cdot A \z Cu, M_tC'u\y \\ = & - \int f_N \z \nabla_V \log \bar{f_N} \cdot A (Cu), M_tC'u\y - \int f_N \z M_tCu, \nabla_V \log \bar{f_N} \cdot A (C'u)\y \\ & - \int f_N \z Cu, (\nabla_V \log f_N \cdot AM_t)C'u \y \\ = & - \int f_N \z C(\nabla_V \log \bar{f_N} \cdot Au), M_tC'u\y - \int f_N \z [\nabla_V \log \bar{f_N} \cdot A, C]u, M_tC'u \y \\ & - \int f_N \z M_tCu, C'(\nabla_V \log \bar{f_N} \cdot A u)\y - \int f_N \z M_tCu, [\nabla_V \log \bar{f_N} \cdot A, C']u \y \\ & - \int f_N \z Cu, (\nabla_V \log f_N \cdot AM_t)C'u \y,
\end{split}
\end{equation}
by similar argument with \eqref{Step2:2-2} and \eqref{Step2:2-3} in Lemma \ref{lemma:Keylemma1}, we have 
\begin{equation}
\begin{split}
& 2\int f_N \z CAu, Au \otimes C'u\y + 2\int f_N \z Au \otimes Cu, C'Au\y \\  & - \int f_N \z [\nabla_V \log \bar{f_N} \cdot A, C]u, C'u \y - \int f_N \z Cu, [\nabla_V \log \bar{f_N} \cdot A, C']u \y \\ & - \int f_N \z Cu, (\nabla_V \log f_N \cdot AM_t)C'u \y.
\end{split}
\end{equation}
Until now, let us collect all terms don't appear in the Lemma \ref{lemma:Keylemma1}:
\begin{equation}\label{only first term}
\begin{split}
& - \int f_N Au \cdot \z Cu, (AM_t)C'u\y \\ & - \int f_N \z CAu, (AM_t)C'u \y - \int f_N \z (AM_t)Cu, C'Au \y \\ & -\int f_N \z Cu, (\nabla_V \log \bar{f_N} \cdot A M_t + \partial_t M_t + BM_t)C'u\y.
\end{split}
\end{equation}
We compute the first term more precisely,
\begin{equation}
-\int f_N Au \cdot \z Cu, (AM_t)C'u\y \\ = - \int \bar{f_N} Ah \cdot \z Cu, (AM_t)C'u\y, 
\end{equation}
using the conjugate relationship w.r.t. measure $\bar{f}_N$, we have
\begin{equation}
- \int \bar{f_N} Ah \cdot \z Cu, (AM_t)C'u\y = - \int f_N A^{\ast} \z Cu, (AM_t)C'u\y,
\end{equation}
recall $A^{\ast} = - A \cdot g - \z \nabla_V \log \bar{f_N}, g \y$, then we have
\begin{equation}
\begin{split}
& - \int f_N A^{\ast} \z Cu, (AM_t)C'u\y \\ = & \sum_{i=1}^N \int f_N A_i \cdot \z Cu, (A_iM_t)C'u \y + \int f_N \nabla_V \log \bar{f_N} \cdot \z Cu, (AM_t)C'u\y \\ = & \sum_{i=1}^{N} \int f_N \z A_iCu, (A_iM_t)C'u \y + \sum_{i=1}^N \int f_N \z (A_iM_t)Cu, A_iC'u \y \\ & + \int f_N \z Cu, (\Delta_V M) C'u\y + \int f_N \z Cu, (\nabla_V \log \bar{f_N} \cdot A M_t)C'u\y \\ = &  \int f_N \z ACu, (AM_t)C'u \y + \int f_N \z (AM_t)Cu, AC'u \y \\ & + \int f_N \z Cu, (\Delta_V M) C'u\y + \int f_N \z Cu, (\nabla_V \log \bar{f_N} \cdot A M_t)C'u\y.
\end{split}
\end{equation}
Combine with last three terms of (\ref{only first term}), all of remaining terms compared with \eqref{Step2:2-4} in the proof of Lemma \ref{lemma:Keylemma1} are  
\begin{equation}
\begin{split}
& \int f_N \z Cu, (\Delta_V M_t)C'u \y \\ & + \int f_N \z [A,C]u, (AM_t)C'u \y + \int f_N \z (AM_t)Cu, [A,C']u \y.
\end{split}
\end{equation}
Together with \eqref{Mstep1:result} in $Step1$, we finish the proof.
\end{proof}

\begin{corollary}\label{Cor:block matrix}
If we take $M$ as a block positive defined matrix, i.e.
\begin{equation}\label{matrix:M}
M = 
\left(
\begin{array}{cc}
E  & F \\
F  & G
\end{array}
\right),
\end{equation}
where $E,F,G: \R \times (\Omega \times \R^d)^N \rightarrow \R^{Nd \times Nd}$ are matrix-valued functions smooth enough, and take $C = C' = (\nabla_X, \nabla_V)$, then we have
\begin{equation}\label{IEq:evlov-Fisher}
\begin{split}
\frac{d}{dt} \left\{\int f_N \z Cu, M_t Cu\y \right\} = & - \int \z Cu, S_tCu \y - 2 \int f_N \z C\overline{R}_N, M_t Cu\y \\ & - 2 \sigma \int f_N \z CAu, M_tC'Au \y,
\end{split}
\end{equation}
where $S_t = S(t,Z)$ is a matrix reads as
\begin{equation}
S_t = 
\left(
\begin{array}{cc}
2F - \partial_t E - L^{\ast}_NE \ \ & D_1 - \partial_t F - L^{\ast}_NF - 2E \nabla^2 U + 2 \gamma F\\
2G - \partial_t F - L^{\ast}_NF \ \ & D_2 - \partial_t G - L^{\ast}_N G - 2F \nabla^2 U + 2 \gamma G
\end{array}
\right)
\end{equation}
and 
\[ D_1 = -4 \sigma (F \nabla_V \nabla_V \log \bar{f_N} + E \nabla_X \nabla_V \log \bar{f_N}), \]
\[ D_2 = -4 \sigma (F \nabla_X \nabla_V \log \bar{f_N} + G \nabla_V
\nabla_V \log \bar{f_N}). \]
\end{corollary}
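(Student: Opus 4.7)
The plan is to specialize Lemma \ref{lemma:Keylemma2} to $C = C' = (\nabla_X, \nabla_V)$ and the block-symmetric matrix $M$; the argument is algebraic rearrangement driven by three short commutator computations. The decisive simplification is that $A = (0, \nabla_V)$ and $C = (\nabla_X, \nabla_V)$ are both constant-coefficient partial-derivative operators, so $[A, C] = [C, A] = [A, C'] = [C', A] = 0$. In the expressions for $Q_{C,A}$ and $Q_{C',A}$ from \eqref{Eq:QCA2}--\eqref{Eq:QC'A2}, this immediately kills every term carrying an $[A, C]$ or $[C, A]$ factor, leaving only the two commutators $[C, A^*]$ and $[\nabla_V \log \bar f_N \cdot A, C]$ (and their $C'$-counterparts) to analyze.

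I would next reduce those surviving commutators. Since $A^* g = -A \cdot g - \langle \nabla_V \log \bar f_N, g\rangle$ and $A$ commutes with $C$, a short computation gives $[\nabla_{x_i}, A^*] = -(\nabla_{x_i}\nabla_V \log \bar f_N)\,\cdot$ and $[\nabla_{v_i}, A^*] = -(\nabla_{v_i}\nabla_V \log \bar f_N)\,\cdot$, and the commutator $[\nabla_V \log \bar f_N \cdot A, C]$ is an analogous multiplication-by-mixed-derivative operator. Substituting these into the four surviving pieces of $\sigma(Q_{C, A} + Q_{C', A})$ and folding them through the block structure of $M$ produces exactly the $-4\sigma$ entries $D_1 = -4\sigma(F \nabla_V\nabla_V \log \bar f_N + E \nabla_X\nabla_V \log \bar f_N)$ and $D_2 = -4\sigma(F \nabla_X\nabla_V \log \bar f_N + G \nabla_V\nabla_V \log \bar f_N)$ that appear in the off-diagonal column of $S_t$.

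The third short computation is $[B, C]$, obtained directly from $B_i = v_i \cdot \nabla_{x_i} - \nabla_{x_i} U \cdot \nabla_{v_i} - \gamma v_i \cdot \nabla_{v_i}$, namely
\begin{equation*}
[B, \nabla_{x_i}] = \sum_j (\nabla_{x_i}\nabla_{x_j} U)\cdot \nabla_{v_j}, \qquad [B, \nabla_{v_i}] = -\nabla_{x_i} + \gamma \nabla_{v_i}.
\end{equation*}
Substituting into $2\int f_N \langle [B, C]u, M Cu\rangle$ (using $C = C'$ and symmetry of $M$) and sorting into the four blocks of the bilinear form $\langle Cu, \cdot\, Cu\rangle$ produces the $-2E \nabla^2 U$, $-2F \nabla^2 U$, $2\gamma F$, $2\gamma G$, $2F$, and $2G$ entries of $S_t$. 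Finally, the last line of Lemma \ref{lemma:Keylemma2} contributes $\int f_N \langle Cu, (\partial_t M + L_N^* M) C u\rangle$ via $L_N^* = B + \sigma \Delta_V$, which supplies the $-\partial_t E - L_N^* E$, $-\partial_t F - L_N^* F$, and $-\partial_t G - L_N^* G$ pieces; the $\overline R_N$-error term and the primary dissipation $-2\sigma \int f_N \langle CAu, M C'Au\rangle$ pass through from Lemma \ref{lemma:Keylemma2} unchanged. The only genuine obstacle is disciplined bookkeeping: the off-diagonal entries $S_{12}$ and $S_{21}$ of $S_t$ are not transposes of each other, so one must carefully track which contributions land in each off-diagonal slot, particularly since $\nabla^2 U$ and the blocks $E, F, G$ of $M$ need not commute.
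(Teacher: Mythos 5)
Your proposal reproduces the paper's own argument: specialize Lemma \ref{lemma:Keylemma2} to $C = C' = (\nabla_X,\nabla_V)$, use that $A$ commutes with $C$ to collapse $Q_{C,A}$ and $Q_{C',A}$ down to the $[C,A^{\ast}]$ and $[\nabla_V \log \bar f_N\cdot A, C]$ contributions (which become multiplications by $\nabla\nabla_V\log\bar f_N$ and assemble into $D_1, D_2$), compute $[B,C]$ directly from $B_i$, and fold the $\partial_t M + BM + \sigma\Delta_V M$ term through $L_N^{\ast} = B + \sigma\Delta_V$. This is exactly the route the paper takes, including identifying that the only nontrivial commutators are the two involving $A^{\ast}$ and $\nabla_V\log\bar f_N\cdot A$, so the remaining work is the block bookkeeping you flag at the end.
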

\begin{remark}
A easy computation shows that  
\begin{equation}\label{Quantity:U}
\nabla^2 U =
\begin{pmatrix}
\nabla^2 V(x_1) + \frac{1}{N} \sum_{j \neq 1}^N \nabla^2 W(x_1 - x_j) & \cdots & -\frac{1}{N} \nabla^2 W(x_1 - x_N) \\ \vdots & \ddots & \vdots \\ -\frac{1}{N} \nabla^2 W(x_N - x_1) & \cdots & \nabla^2 V(x_N) + \frac{1}{N} \sum_{j \neq N}^N \nabla^2 W(x_N - x_j)
\end{pmatrix},
\end{equation}
which is a $Nd \times Nd$ matrix.
\end{remark}
\begin{proof}
The main idea is to analyse every term in (\ref{keylemma2}). Let us compute what commutator $[B,C]$ is firstly. For each component of operator $C = (\nabla_{X},\nabla_{V})$, we have
\begin{equation}
[\nabla_{X}, B]_i = \sum_{k=1}^{N} [\nabla_{x_i}, B_k] = - \sum_{k=1}^N [\nabla_{x_i},  \nabla_{x_k} U \cdot \nabla_{v_k}] = - \sum_{k=1}^N \nabla^2_{x_i,x_k} U \cdot \nabla_{v_k},
\end{equation}
and
\begin{equation}
\begin{split}
[\nabla_{V}, B]_i = \sum_{k=1}^{N} [\nabla_{v_i}, B_k] & = \sum_{k=1}^{N} [\nabla_{v_i}, v_k \cdot \nabla_{x_k} - \gamma v_k \cdot \nabla_{v_k}] \\ & = \sum_{k=1}^{N} \delta_{ik} (\nabla_{x_k} - \gamma \nabla_{v_k}) = \nabla_{x_i} - \gamma \nabla_{v_i}.
\end{split}
\end{equation}
In other word, we have
\begin{equation}
\begin{split}
[B,C]^{\top} = - (-\nabla^2 U \cdot \nabla_V, \nabla_X - \gamma \nabla_V)^{\top} =
\left(
\begin{array}{cc}
0 & \nabla^2 U \\
-Id & \gamma Id
\end{array}
\right)
\left(
\begin{array}{c}
\nabla_X \\
\nabla_V
\end{array}
\right).
\end{split}
\end{equation}
Then we regard $\z M_t[B, C]u, Cu \y + \z Cu, M_t[B, C]u \y$ as a quadratic form with matrix
\begin{equation}\label{Mat:commutatorBC}
\left(
\begin{array}{cc}
-2F & E \nabla^2 U - \gamma F - G \\
E \nabla^2 U - \gamma F - G & 2F \nabla^2 U - 2 \gamma G
\end{array}
\right).
\end{equation}
Since $M_t$ is a positive defined matrix, $-\z CAu, M_tCAu \y$ must be negative, so we just keep it. Moreover, $C, C'$ are commutative with $A$, only nontrivial terms in (\ref{Eq:QCA2}) and (\ref{Eq:QC'A2}) are first line two terms. We recall $[C,A^{\ast}]$ and $[\nabla_V \log \bar{f_N} \cdot A, C]$ in Corollary \ref{Cor:QCA}, and we regard $Q_{C,A} + Q_{C',A}$ as a quadratic form with following matrix,
\begin{equation}\label{Mat:C-Astar}
\left(
\begin{array}{cc}
E & F \\
F & G
\end{array}
\right)
\left(
\begin{array}{cc}
0 & 4 \sigma \nabla_X \nabla_V \log \bar{f_N} \\
0 & 4 \sigma \nabla_V \nabla_V \log \bar{f_N}
\end{array}
\right)
=
\left(
\begin{array}{cc}
0 & 4 \sigma (E \nabla_X \nabla_V \log \bar{f_N} + F \nabla_V \nabla_V \log \bar{f_N})\\
0 & 4 \sigma (F \nabla_X \nabla_V \log \bar{f_N} + G \nabla_V \nabla_V \log \bar{f_N})
\end{array}
\right).
\end{equation}
For the last line in (\ref{keylemma2}), we recall duality of Liouville operator (\ref{op:Dual Liouville}), then we have
\begin{equation}\label{Mat:dualLiouville}
\partial_t M_t + BM_t + \sigma M_t = 
\left(
\begin{array}{cc}
\partial_t E + L_N^{\ast}E & \partial_t F + L_N^{\ast}F \\
\partial_t F + L_N^{\ast}F & \partial_t G + L_N^{\ast}G
\end{array}
\right).
\end{equation}
Combining (\ref{Mat:commutatorBC}),(\ref{Mat:C-Astar}) and (\ref{Mat:dualLiouville}), we finish the proof.
\end{proof}

\subsection{Weighted log Sobolev inequality}\label{section:WLSI}

In this section, we establish the weighted Log-Sobolev inequality for nonlinear equilibrium $f_{\infty}$ defined by Eq.\eqref{Eq:nonlinear equilibrium}, then we extend to the weighted $N$-particle version by verifying tensorized invariance of weighted Log-Sobolev inequality. Before that, let us talk about the situation of first order particle system. Guillin et al. consider the uniform-in-time propagation of chaos with the following limiting equation on $\mathbb{T}^d$ in \cite{Guillin2021UniformIT},
\begin{equation}\label{Eq:first order limiting equation}
\partial_t \rho + \text{div}_x(\rho K \ast \rho ) = \sigma \Delta_x \rho, \ \ \ x \in \mathbb{T}^d,
\end{equation}
where $\rho(t, x): \R \times \Omega \rightarrow \R$ is a probability density. An very useful observation in \cite{Guillin2021UniformIT} is that, if there exists some constant $\lambda > 1$ such that
\begin{equation}\label{IEq:upper-lower-rho(0)}
\frac{1}{\lambda} \leq \rho_0 \leq \lambda,
\end{equation}
for initial data $\rho_0$ of Eq.(\ref{Eq:first order limiting equation}) on $\mathbb{T}^d$, then they propagate this property to all time $t$ uniformly, i.e.
\begin{equation}\label{IEq:upper-lower-rho(t)}
\frac{1}{\lambda} \leq \rho(t) \leq \lambda.
\end{equation}
After controlling the upper bound of  $\|\nabla^n \rho \|_{L^{\infty}}$ by standard energy estimates, they obtain the upper bound of $\| \nabla \log \rho \|_{L^{\infty}}$ and $\| \nabla^2 \log \rho \|_{L^{\infty}}$, which is essential for the proof of Theorem 1 in \cite{Guillin2021UniformIT}. Another important observation in \cite{Guillin2021UniformIT} is that $\rho_t(x)$ satisfies Log-Sobolev inequality uniformly in $t$ as a result of perturbation of uniform distribution by (\ref{IEq:upper-lower-rho(t)}) (See Proposition 5.1.6, \cite{BGL}). These two facts help them obtain uniform-in-time propagation of chaos even for Biot-Savart kernel. But in the case of Vlasov-Fokker-Planck equation \eqref{Eq:limiting equation}, the situation becomes totally different. The best we can expect to initial data of Eq.\eqref{Eq:limiting equation} is 
\begin{equation}\label{IEq:lower-bound-VFP}
f_0(x,t) \geq Ce^{-\frac{av^2}{2}},
\ \ \ (x,v) \in \Omega \times \R^d
\end{equation}
for some constant $C > 0, a > 0$. The lack of positive lower bound of \eqref{IEq:lower-bound-VFP} makes the uniform-in-time upper bound of $\| \nabla^2 \log f \|_{L^{\infty}}$ fail by the same strategy in \cite{Guillin2021UniformIT}, that is the reason why we replace the reference measure $f$ by $f_{\infty}$. 

Inspired by the argument of Gibbs measure for one particle in \cite{EntropyMutiplier},
\begin{equation}\label{Eq:oneparticle-invmeasure}
d\mu = \frac{1}{Z}e^{- \beta H(x,v)} \ud x \ud v, \ \ \ (x,v) \in \Omega \times \R^d,
\end{equation}
where $H(x,v) = \frac{v^2}{2} + V(x)$ is the Hamiltonian defined on $\Omega \times \R^d$ and $Z$ is partition function. We omit the temperature constant $\beta$ in the following and recall some related results in \cite{EntropyMutiplier}.

\begin{definition}\label{Assumption:mu-LSI}
$\mu$ satisfies the following weighted Log-Sobolev inequality in $\R^d \times \R^d$ if there exists some constant $\rho_{wls}(\mu) > 0$ s.t. for all smooth enough $g$ with $\int g^2 d\mu = 1$:
\begin{equation}\label{IEq:W-log-sob}
Ent_{\mu}(g^2) \leq \rho_{wls}(\mu) \int (H^{-2\eta}|\nabla_x g|^2 + |\nabla_v g|^2) \ud \mu.
\end{equation}
\end{definition}
The weighted Log-Sobolev inequality \eqref{IEq:W-log-sob} associates with a new second order operator $L_{\eta}$ on $\R^d \times \R^d$, 
\begin{equation}\label{op:W-L}
L_{\eta} := H^{-2\eta} \Delta_x + \Delta_v - H^{-2\eta} \left( 2 \eta \frac{\nabla_x H}{H} + \nabla_x H \right) \cdot \nabla_x - \nabla_v H \cdot \nabla_v,
\end{equation}
which is symmetric in $L^2(\mu)$ and satisfies
\begin{equation*}
\int f (L_{\eta}g)d\mu = - \int (H^{-2\eta} \nabla_x f \cdot \nabla_x g + \nabla_v f \cdot \nabla_v g) \ud \mu.
\end{equation*}
The following theorem tells us how to verifying the weighted Log-Sobolev inequality for suitable condition of function $H$.
\begin{theorem}
Assume that $H$ goes to infinity at infinity and that there exists $a > 0$ such that $e^{aH} \in L^{1}(\mu)$. 

(1) If $\mu$ satisfies the weighted Log-Sobolev inequality (\ref{IEq:W-log-sob}), then, there exists a Lyapunov function, i.e. a smooth function $W$ such that $W(x,y) \geq w > 0$ for all $(x,v)$, two positive constant $\lambda$ and $b$ such that
\begin{equation}\label{IEq:lyapounov}
L_{\eta}W \leq - \lambda H W + b.
\end{equation}

(2) Conversely, assume that there exists a Lyapunov function satisfying (\ref{IEq:lyapounov}) and that $|\nabla H| \geq c > 0$ for $|(x,y)|$ large enough. Define
\begin{equation}
\theta(r) = \sup_{z \in \partial A_r} \max_{i,j=1,...,2d}|\frac{\partial^2 H}{\partial z_i \partial z_j}|
\end{equation}
and assume that $\theta(r) \leq ce^{C_0r}$ with some positive constants $C_0$ and $c$ for $r$ sufficiently large. Then $\mu$ satisfies the weighted Log-Sobolev inequality (\ref{IEq:W-log-sob}).
\end{theorem}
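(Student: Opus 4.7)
My plan treats the two directions separately, organized around the carré du champ $\Gamma(f,g)=H^{-2\eta}\nabla_x f\cdot\nabla_x g+\nabla_v f\cdot\nabla_v g$ naturally associated with the symmetric operator $L_\eta$, so that integration by parts reads $\int f(-L_\eta g)\,d\mu=\int \Gamma(f,g)\,d\mu$, and $L_\eta F(H)=F'(H)\,L_\eta H+F''(H)\,\Gamma(H,H)$ for any smooth scalar $F$. Both directions exploit this single algebraic identity.

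For direction (1) the natural candidate is $W=e^{sH}$ with $s>0$ to be chosen small. Expanding,
\begin{equation*}
\frac{L_\eta W}{W}=s\bigl(H^{-2\eta}\Delta_x V+d-(1+2\eta/H)H^{-2\eta}|\nabla_x V|^2-v^2\bigr)+s^2\bigl(H^{-2\eta}|\nabla_x V|^2+v^2\bigr).
\end{equation*}
I would split the phase space into $\{V\ge v^2/2\}$, where $H\le 2V$ so that Assumption \ref{Assumption:V2}(i)-(ii) gives $H^{-2\eta}|\nabla V|^2\gtrsim H$, and the complement, where $v^2\ge H$. Choosing $s<1-\kappa_1$ makes the coefficients of both $H^{-2\eta}|\nabla V|^2$ and $v^2$ strictly negative, so $L_\eta W\le -\lambda H W+b$ holds outside a compact set and, by continuity, globally after enlarging $b$. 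The weighted LSI hypothesis enters only to guarantee, via Herbst's argument applied to $g^2\propto e^{sH}/\int e^{sH}\,d\mu$, that such an $s$ exists with $e^{sH}\in L^1(\mu)$, consistent with the standing assumption $e^{aH}\in L^1(\mu)$.

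For direction (2) my plan follows the three-step Lyapunov-to-LSI route of Cattiaux-Guillin-Wang. \emph{Step 1}: multiply the rearranged bound $\lambda H\le -L_\eta W/W+b/w$ by $g^2\,d\mu$, integrate by parts to get $\int g^2(-L_\eta W/W)\,d\mu=2\int g\,\Gamma(g,W)/W\,d\mu-\int g^2\Gamma(W,W)/W^2\,d\mu$, and apply Cauchy-Schwarz on the cross term so that the Lyapunov contribution cancels, producing the weighted Poincaré-type estimate
\begin{equation*}
\int H g^2\,d\mu\le \frac{1}{\lambda}\int \Gamma(g,g)\,d\mu+\frac{b}{\lambda w}\int g^2\,d\mu.
\end{equation*}
\emph{Step 2}: on each sublevel set $B_R=\{H\le R\}$ the density of $\mu$ is bounded above and below (the exponential integrability, the lower bound $|\nabla H|\ge c$ at infinity, and the growth bound $\theta(r)\le ce^{C_0 r}$ together ensure smoothness and non-degeneracy), so the Bakry-\'Emery LSI on balls together with the Holley-Stroock perturbation principle yields a local LSI on $B_R$; since $H^{-2\eta}\ge R^{-2\eta}$ on $B_R$, this converts into a weighted local LSI for $\mu|_{B_R}$ with constant of order $R^{2\eta}$. \emph{Step 3}: with a smooth cutoff $\chi_R$ equal to $1$ on $B_R$ and supported in $B_{2R}$, Rothaus' lemma splits $\mathrm{Ent}_\mu(g^2)$ into the entropy of $(\chi_R g)^2$ (handled by Step 2) and a tail piece, which by the elementary inequality $\log t\le \epsilon t+C_\epsilon$ combined with Step 1 is dominated by $R^{-1}\int Hg^2\,d\mu+\int g^2\,d\mu$; choosing $R$ large absorbs the remainder into the Dirichlet form.

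The chief obstacle is the calibration in Step 3: the local LSI constant inflates like $R^{2\eta}$ while the tail bound decays like $R^{-1}$, so the optimised $R$ produces a finite $\rho_{wls}(\mu)$ only because the cutoff gradient $|\nabla_x\chi_R|^2$ enters the Dirichlet form weighted by $H^{-2\eta}$, shrinking at exactly the rate dictated by $|\nabla V|^2\ge\kappa_2 V^{2\eta+1}$. Ensuring this delicate matching, and justifying Step 1 rigorously (for which one first truncates $W$ to $W\wedge n$ and passes to the limit so that the integration by parts is legitimate and no divergent terms appear in the Cauchy-Schwarz step), are the genuine technicalities; the rest follows standard templates.
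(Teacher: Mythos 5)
The paper itself does not prove this theorem: it quotes it and refers to \cite{2016HITTING} and \cite{EntropyMutiplier} for the argument (noting only that $W=e^{\alpha(v^2/2+V)}$ is a natural Lyapunov function for direction (2)). So there is no in-paper proof to compare against; I evaluate your proposal on its own.

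Your direction (1) has a genuine gap. The theorem is an abstract statement: for any probability measure $\mu$ and any $H\to\infty$ with $e^{aH}\in L^1(\mu)$, the weighted LSI \emph{implies} the Lyapunov bound \eqref{IEq:lyapounov}. Your verification that $W=e^{sH}$ satisfies \eqref{IEq:lyapounov} never uses the LSI at all; it invokes Assumption \ref{Assumption:V2} on $V$ (and the specific form $H=v^2/2+V$). Under those hypotheses the Lyapunov bound holds whether or not $\mu$ satisfies the weighted LSI, so you have proved a different, unconditional statement rather than the implication ``LSI $\Rightarrow$ Lyapunov''. The role you assign to the LSI --- ensuring $e^{sH}\in L^1(\mu)$ --- is already guaranteed by the standing assumption $e^{aH}\in L^1(\mu)$ and is therefore not doing any work. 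In the Cattiaux--Guillin framework the forward direction genuinely uses the LSI (typically via its equivalent super-Poincar\'e / $F$-Sobolev form) to build a Lyapunov function for a general $\mu$; one cannot substitute structural hypotheses on $V$ for that step without trivializing the implication.

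Your direction (2) follows the correct Cattiaux--Guillin--Wang route (Lyapunov bound $\Rightarrow$ weighted bound on $\int Hg^2\,d\mu$ via the identity $\int g^2(-L_\eta W/W)\,d\mu=\int\Gamma(g^2/W,W)\,d\mu$ and Cauchy--Schwarz; local LSI on sub-level sets by perturbation; Rothaus truncation to glue), and Step 1 is computed correctly. The weak point is Step 2: the claim that the local LSI constant on $B_R=\{H\le R\}$ is ``of order $R^{2\eta}$'' does not follow merely from the lower bound $H^{-2\eta}\ge R^{-2\eta}$; it also requires control of the local Poincar\'e/LSI constant of the unweighted restriction $\mu|_{B_R}$, which depends on the Euclidean diameter of $B_R$ and on the oscillation of $\log d\mu$ there --- this is exactly where the hypotheses $|\nabla H|\ge c$ at infinity and $\theta(r)\le ce^{C_0 r}$ enter, and you should track how those bounds feed into the constant before asserting the $R^{2\eta}$ rate and optimizing in Step 3. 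As a sketch this is in the right spirit, but the calibration asserted at the end is not yet justified.
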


A natural choice of Lyapunov function is $W(x,v) = e^{\alpha(\frac{v^2}{2} + V(x))}$ with $\alpha \in (0,1)$, we refer \cite{2016HITTING} and \cite{EntropyMutiplier} to readers for more details. A simple perturbation argument in \cite{BGL} could extend the weighted Log-Sobolev inequality to $f_{\infty}$.
\begin{proposition}\label{Pro:perturbation}
Assume that $\mu$ satisfies the weighted Log-Sobolev inequality (\ref{IEq:W-log-sob}), let $\mu_1$ be a probability measure with density $h$ with respect to $\mu$ such that $\frac{1}{\lambda} \leq h \leq \lambda$ for some constant $\lambda > 0$, then $\mu_1$ satisfies
\begin{equation}\label{IEq:Pertubation}
Ent_{\mu_1}(g^2) \leq \rho_{wls}(\mu) \lambda^2 \int (H^{-2\eta}|\nabla_x g|^2 + |\nabla_v g|^2) \ud \mu_1.
\end{equation}
\end{proposition}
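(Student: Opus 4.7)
The plan is to adapt the classical Holley--Stroock perturbation argument to the weighted setting, exploiting the fact that the weight $H^{-2\eta}$ appearing inside the Dirichlet form plays no role in the comparison between $\mu$ and $\mu_1$.

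The starting point is the variational characterization of the entropy: for any probability measure $\nu$ and nonnegative integrable $f$,
\begin{equation*}
\mathrm{Ent}_\nu(f)=\inf_{a>0}\int \bigl[f\log(f/a)-f+a\bigr]\,\ud \nu,
\end{equation*}
with the integrand pointwise nonnegative. Applied to $\nu=\mu_1$ and $f=g^2$ with the particular choice $a=\int g^2\,\ud\mu$, and using $\ud\mu_1=h\,\ud\mu$ together with $h\le \lambda$, I get
\begin{equation*}
\mathrm{Ent}_{\mu_1}(g^2)\le \int \bigl[g^2\log(g^2/a)-g^2+a\bigr]h\,\ud\mu\le \lambda\int \bigl[g^2\log(g^2/a)-g^2+a\bigr]\,\ud\mu=\lambda\,\mathrm{Ent}_\mu(g^2),
\end{equation*}
where the last equality uses that $a=\int g^2\,\ud\mu$ is precisely the optimizer of the variational formula for $\mathrm{Ent}_\mu(g^2)$.

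Next I apply the weighted log-Sobolev inequality for $\mu$ (in its unnormalized form, obtained from the stated normalized one by homogeneity under $g\mapsto cg$) to bound
\begin{equation*}
\mathrm{Ent}_\mu(g^2)\le \rho_{wls}(\mu)\int \bigl(H^{-2\eta}|\nabla_x g|^2+|\nabla_v g|^2\bigr)\,\ud\mu.
\end{equation*}
Finally, to convert the right-hand integral back to one against $\mu_1$, I use $\ud\mu=h^{-1}\,\ud\mu_1$ together with $h^{-1}\le \lambda$, which yields an additional factor $\lambda$. Combining the two steps gives exactly the announced constant $\rho_{wls}(\mu)\lambda^2$.

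There is no real analytic obstacle here: the only thing to verify is that the weight $H^{-2\eta}$ is essentially inert in this perturbation argument, since the comparison $\ud\mu_1\simeq \ud\mu$ is carried out at the level of measures and not of differential operators. The two factors of $\lambda$ arise symmetrically, one on the entropy side and one on the weighted Dirichlet form side, which explains why the final constant is $\lambda^2$ rather than $\lambda$.
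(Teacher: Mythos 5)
Your proposal is correct and follows essentially the same Holley--Stroock perturbation argument as the paper: the paper invokes the same variational characterization of entropy (citing BGL, Lemma 5.1.7, with $\phi(r)=r\log r$), picks up a factor $\lambda$ by bounding $\mathrm{Ent}_{\mu_1}(g^2)\le\lambda\,\mathrm{Ent}_\mu(g^2)$ at the optimizer $a=\int g^2\,\ud\mu$, applies the weighted log-Sobolev inequality for $\mu$, and then converts the Dirichlet form back to $\mu_1$ via $\ud\mu\le\lambda\,\ud\mu_1$ for the second factor $\lambda$. Your explicit remark that the normalized form of \eqref{IEq:W-log-sob} must first be extended by homogeneity is a point the paper glosses over, but the two proofs are otherwise the same.
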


\begin{proof}
We use the following lemma in \cite{BGL} (Page240, Lemma 5.1.7),
\begin{lemma}
Let $\phi: I \rightarrow \R$ on some open interval $I \subset \R$ be convex of class $C^2$. For every bounded or suitably integrable measurable function $f: E \rightarrow \R$ with values in $I$,
\begin{equation}
\int_E \phi(f)d\mu - \phi\left( \int_Ef d\mu \right) = \inf_{r \in I} \int_E [\phi(f) - \phi(r) - \phi'(r)(f - r)] \ud \mu. 
\end{equation}
\end{lemma}
For the function $\phi(r) = rlogr$ on $I = (0, \infty)$, we can easily establish (\ref{IEq:Pertubation}) by
\begin{equation*}
Ent_{\mu_1}(g^2) = \inf_{r \in I} \int_E [\phi(g^2) - \phi(r) - \phi'(r)(g^2 - r)] d\mu_1 \leq \lambda Ent_{\mu}(g^2), 
\end{equation*}
and 
\begin{equation*}
\begin{split}
\lambda Ent_{\mu}(g^2) & \leq \rho_{wls}(\mu) \lambda \int (H^{-2\eta}|\nabla_x g|^2 + |\nabla_v g|^2) \ud \mu \\ & \leq \rho_{wls}(\mu) \lambda^2\int (H^{-2\eta}|\nabla_x g|^2 + |\nabla_v g|^2) \ud \mu_1.
\end{split}
\end{equation*}
\end{proof}

Now we verify the invariance of weighted Log-Sobolev constant, by which we extend weighted Log-Sobolev inequality \eqref{IEq:W-log-sob} to the measure of tensorized form $\mu^{\otimes N}$.   

\begin{proposition}\label{Pro:weight tensoriztion}
Assume that $\mu_1, \mu_2$ satisfy the weighted Log-Sobolev inequality (\ref{IEq:W-log-sob}) with constant $\rho_{wls}(\mu_1)$ and $\rho_{wls}(\mu_2)$, then $\mu_1 \otimes \mu_2$ satisfies 
\begin{equation}\label{IEq:N-W-log-sob}
Ent_{\mu_1 \otimes \mu_2}(f^2) \leq \rho_{wls} \sum_{i=1}^2 \int_{E_1 \times E_2} (H^{-2\eta}(z_i)|\nabla_{x_i} f|^2 + |\nabla_{v_i} f|^2) \ud \mu_1 \otimes \mu_2
\end{equation}
with $\rho_{wls} = \max\{\rho_{wls}(\mu_1), \rho_{wls}(\mu_1)\}$.
\end{proposition}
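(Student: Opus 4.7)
The plan is to adapt the classical tensorization argument for the log-Sobolev inequality (see e.g. Proposition 5.2.7 in \cite{BGL}) to the weighted version. Because the weighted LSI shares the same quadratic/subadditive structure as the standard LSI — with the weight $H^{-2\eta}(z_i)$ attaching \emph{only} to the $x_i$-gradient associated with the $i$-th marginal — the classical chain of inequalities goes through verbatim, provided we are careful that the weight in each marginal inequality depends only on the variable being integrated.

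The first step is the additivity of entropy under products. For a smooth $f$ with $\int f^2 \ud \mu_1 \otimes \mu_2 = 1$, I would invoke the decomposition
\begin{equation*}
\mathrm{Ent}_{\mu_1 \otimes \mu_2}(f^2) = \int_{E_1} \mathrm{Ent}_{\mu_2}\!\left(f^2(z_1,\cdot)\right) \ud \mu_1(z_1) + \mathrm{Ent}_{\mu_1}(\varphi^2), \qquad \varphi(z_1) := \Bigl(\int_{E_2} f^2(z_1,z_2) \ud \mu_2(z_2)\Bigr)^{1/2}.
\end{equation*}
Applying the weighted LSI for $\mu_2$ to the inner entropy (at each fixed $z_1$) yields
\begin{equation*}
\mathrm{Ent}_{\mu_2}\!\left(f^2(z_1,\cdot)\right) \leq \rho_{wls}(\mu_2) \int_{E_2} \bigl( H^{-2\eta}(z_2) |\nabla_{x_2} f|^2 + |\nabla_{v_2} f|^2 \bigr) \ud \mu_2(z_2),
\end{equation*}
which after integrating in $z_1$ gives exactly the $i=2$ contribution on the right-hand side of \eqref{IEq:N-W-log-sob}, up to the constant $\rho_{wls}(\mu_2) \leq \rho_{wls}$.

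The second step is to apply the weighted LSI for $\mu_1$ to $\varphi^2$. This requires bounding $|\nabla_{x_1} \varphi|^2$ and $|\nabla_{v_1}\varphi|^2$ by the corresponding marginal integrals of $|\nabla_{x_1} f|^2$ and $|\nabla_{v_1} f|^2$. Differentiating under the integral and using Cauchy--Schwarz,
\begin{equation*}
|\nabla_{x_1} \varphi(z_1)|^2 = \frac{\bigl|\int_{E_2} f \, \nabla_{x_1} f \, \ud \mu_2\bigr|^2}{\varphi(z_1)^2} \leq \frac{\int f^2 \ud \mu_2 \cdot \int |\nabla_{x_1} f|^2 \ud \mu_2}{\varphi(z_1)^2} = \int_{E_2} |\nabla_{x_1} f|^2 \ud \mu_2(z_2),
\end{equation*}
and identically for $\nabla_{v_1}$. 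The crucial observation — this is the only point where the weighted setting differs from the classical one — is that the weight $H^{-2\eta}(z_1)$ is a function of $z_1$ only, so it can be moved inside the $\mu_2$-integral without modification:
\begin{equation*}
H^{-2\eta}(z_1)|\nabla_{x_1} \varphi|^2 \leq \int_{E_2} H^{-2\eta}(z_1)|\nabla_{x_1} f|^2 \ud \mu_2(z_2).
\end{equation*}
Combining these with the weighted LSI for $\mu_1$ and integrating against $\mu_1$ yields the $i=1$ contribution.

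Summing the two contributions and bounding both prefactors by $\rho_{wls} = \max\{\rho_{wls}(\mu_1), \rho_{wls}(\mu_2)\}$ produces \eqref{IEq:N-W-log-sob}. I do not expect any serious obstacle: the only nontrivial point is the weight-compatibility check in the Cauchy--Schwarz step above, which is immediate precisely because the weight at site $i$ depends only on $z_i$. The argument also iterates verbatim to any finite product $\mu^{\otimes N}$, which is what is needed for the $N$-particle applications in the sequel.
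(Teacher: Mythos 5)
Your proof is correct and follows essentially the same route as the paper: the standard entropy decomposition over the product, the weighted LSI applied to the inner entropy and to the marginal function $\varphi$ (the paper's $g$), and a Cauchy--Schwarz bound to push the gradients of $\varphi$ under the $\mu_2$-integral, with the weight $H^{-2\eta}(z_1)$ factoring out because it depends only on $z_1$. The observation about weight-compatibility is exactly the point the paper exploits as well.
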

\begin{proof}
We denote
\begin{equation*}
g^2(z_1) = \int_{E_2} f^2(z_1,z_2) \ud \mu_2(z_2),
\end{equation*}
and of course $g \geq 0$, then
\begin{equation*}
\begin{split}
Ent_{\mu_1 \otimes \mu_2}(f^2) = &  Ent_{\mu_1}(g^2) \\ + & \int_{E_1} \left( \int_{E_2} f^2(z_1,z_2) \log f^2(z_1,z_2) \ud \mu_2(z_2) \right. \\ & \left. - \int_{E_2} f^2(z_1,z_2) \ud \mu_2(z_2) \log \int_{E_2}f^2(z_1,z_2) \ud \mu_2(z_2) \right) \ud \mu_1(z_1).
\end{split}
\end{equation*}
Observe that
\begin{equation*}
\begin{split}
Eut_{\mu_2}(f^2) = & \int_{E_2} f^2(z_1,z_2) \log f^2(z_1,z_2) \ud \mu_2(z_2) \\ & - \int_{E_2} f^2(z_1,z_2) \ud \mu_2(z_2) \log \int_{E_2}f^2(z_1,z_2) \ud \mu_2(z_2) \\ \leq & \rho_{wls}(\mu_2) \int_{E_2} (H^{-2\eta}(z_2
)|\nabla_{x_2}f|^2 + |\nabla_{v_2}f|^2) \ud \mu_2(z_2).
\end{split}
\end{equation*}
Next we estimate $Ent_{\mu_1}(g^2)$, 
\begin{equation*}
\begin{split}
Ent_{\mu_1}(g^2) \leq  \rho_{wls}(\mu_1) \int_{E_2} (H^{-2\eta}(z_1)|\nabla_{x_1}g|^2 + |\nabla_{v_1}g|^2) \ud \mu_1(z_1),
\end{split}
\end{equation*}
for terms on the right hand side, observe the weight $H(z_1)$ only concerns the first variable,  
\begin{equation*}
\begin{split}
|H^{-\eta}(z_1)\nabla_{x_1}g|^2  & = \left| H^{-\eta}(z_1) \nabla_{x_1} \sqrt{\int_{E_2} f^2(z_1,z_2) \ud \mu(z_2)} \right|^2 \\ & \leq  \frac{\left(\int_{E_2} f (H^{-\eta}(z_1) |\nabla_{x_1}f|) \ud \mu(z_2)\right)^2}{\int_{E_2} f^2(z_1,z_2) \ud \mu(z_2)},
\end{split}
\end{equation*}
and use Cauchy-Schwarz inequality, we have
\begin{equation*}
|H^{-\eta}(z_1)\nabla_{x_1}g|^2 \leq \int_{E_2} H^{-2\eta}(z_1)|\nabla_{x_1}f|^2 \ud \mu_2(z_2).
\end{equation*}
Similarly,
\begin{equation*}
|\nabla_{v_1}g|^2 \leq \int_{E_2} |\nabla_{v_1}f|^2 \ud \mu_2(z_2),
\end{equation*}
then we finish the proof.
\end{proof}

\begin{corollary}\label{Cor:LSI-f_infty}
Assume that $V$ satisfies $(i)$ of Assumption \ref{Assumption:V1}, and $W$ satisfies Assumption \ref{Assumption:W1}, then $f_{\infty}$ satisfies weighted Log-Sobolev inequality \eqref{IEq:W-log-sob}. Moreover, $f_{\infty}^{\otimes N}$ satisfies weighted Log-Sobolev inequality \eqref{IEq:W-log-sob} with the same constant. 
\end{corollary}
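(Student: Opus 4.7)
The plan is to obtain the weighted log-Sobolev inequality for $f_\infty$ from the Lyapunov-function characterization recalled in this subsection, and then propagate it to $f_\infty^{\otimes N}$ via Proposition \ref{Pro:weight tensoriztion}. The key point is that $f_\infty$ is still of single-particle Gibbs form with an \emph{effective} confining potential $V_\infty$ that inherits the structural hypotheses placed on $V$. Note that the direct route via Proposition \ref{Pro:perturbation} fails, because $W\ast\rho_\infty$ is not bounded in general (under Assumption \ref{Assumption:W1} it can grow quadratically), so one really has to redo the Lyapunov analysis for the perturbed Hamiltonian.

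Concretely, I write
\[
f_\infty(x,v) = \frac{1}{Z}\exp\bigl(-\beta H_\infty(x,v)\bigr), \qquad H_\infty = \tfrac12 v^2 + V_\infty(x), \qquad V_\infty := V + W\ast\rho_\infty.
\]
Since $V(x) \geq \lambda|x|^2 - M$ and $\|\nabla^2 W\|_{L^\infty} \leq C_K < \lambda/2$, a standard moment estimate applied to the fixed-point relation for $\rho_\infty$ yields finite polynomial moments of all orders. From this, $V_\infty$ inherits the structural properties of $V$, with at most slightly worse constants: the coercivity $V_\infty(x) \geq (\lambda - C_K/2)|x|^2 - M'$ persists, the weighted Hessian bound $\|V_\infty^{-2\theta}\nabla^2 V_\infty\|_{L^\infty} < \infty$ follows from the pointwise bounds on $\nabla^2 V$ and $\nabla^2 W$, and the sign conditions $\Delta V \leq \kappa_1|\nabla V|^2$, $|\nabla V|^2 \geq \kappa_2 V^{2\theta+1}$ are preserved outside a possibly larger compact set because $W\ast\rho_\infty$ is a lower-order perturbation at infinity. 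I then feed $H_\infty$ into the Lyapunov criterion of \cite{EntropyMutiplier} with candidate $\mathcal{W}(x,v) = \exp(\alpha H_\infty(x,v))$ for a small $\alpha \in (0,1)$; a direct computation using the operator in \eqref{op:W-L} yields the drift inequality $L_\eta \mathcal{W} \leq -\lambda' H_\infty \mathcal{W} + b$ for some $\lambda', b > 0$, which produces the weighted log-Sobolev inequality \eqref{IEq:W-log-sob} for $f_\infty$ with a constant $\rho_{wls}$ depending only on $V, W, \sigma, \gamma$ and independent of $N$.

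The tensorized version for $f_\infty^{\otimes N}$ is then immediate by induction from Proposition \ref{Pro:weight tensoriztion}: the two-factor product inherits the constant $\max\{\rho_{wls},\rho_{wls}\} = \rho_{wls}$, and iterating this step preserves the same constant across all $N$. The main obstacle in the plan is the first step, namely verifying that conditions $(i)$ and $(ii)$ of Assumption \ref{Assumption:V2}, together with the weighted Hessian bound, really survive the additive perturbation by $W\ast\rho_\infty$. The delicate point is preserving the strict constant $\kappa'_1 < 1$ in $\Delta V_\infty \leq \kappa'_1 |\nabla V_\infty|^2$ at infinity; this is where the Lipschitz bound $\|\nabla W\|_{L^\infty} < \infty$ from the second case of Theorem \ref{Thm:theorem1.4} is essential, since it forces $\nabla(W\ast\rho_\infty)$ and $\Delta(W\ast\rho_\infty)$ to be negligible compared to $\nabla V$ and $\Delta V$ when $V$ is super-quadratic, so that the perturbed inequalities hold with $\kappa'_1, \kappa'_2$ close to the original $\kappa_1, \kappa_2$.
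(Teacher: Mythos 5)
Your proof takes a genuinely different route from the paper's. The paper's argument is essentially three lines: from $\|\nabla^2 W\|_{L^\infty}\le C_K$ it deduces $|W\ast\rho_\infty(x)| \le C + \tfrac{\lambda}{2}|x|^2$, asserts the two-sided density sandwich $a\,\mu \le f_\infty \le b\,\mu$ with $\mu \propto e^{-V - v^2/2}$, and then invokes Proposition~\ref{Pro:perturbation} followed by Proposition~\ref{Pro:weight tensoriztion}. You instead write $f_\infty \propto e^{-\beta H_\infty}$ with $H_\infty = \tfrac12 v^2 + V + W\ast\rho_\infty$, verify that the effective potential $V_\infty = V + W\ast\rho_\infty$ inherits coercivity, the weighted Hessian bound, and the drift conditions $(i)$, $(ii)$ of Assumption~\ref{Assumption:V2} outside a compact set, and run the Lyapunov criterion directly on $H_\infty$. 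Your opening objection to the perturbative route is well founded: a quadratic upper bound on $|W\ast\rho_\infty|$ does \emph{not} give the bounded density ratio $\lambda^{-1}\le f_\infty/\mu \le \lambda$ that Proposition~\ref{Pro:perturbation} requires. Indeed $f_\infty/\mu = \text{const}\cdot e^{-W\ast\rho_\infty}$ is unbounded whenever $W\ast\rho_\infty$ is, which happens already for the harmonic interaction $W = \frac{L_W}{2}|x|^2$ that Assumption~\ref{Assumption:W1} admits. So the paper's intermediate step $a\mu\le f_\infty\le b\mu$ is a genuine gap under the stated hypotheses (it is fine when $W$ itself is bounded, e.g.\ mollified Coulomb, but that is a strictly narrower setting), and your re-derivation of the Lyapunov inequality for the full effective Hamiltonian is the robust way to close it. The tensorization step via Proposition~\ref{Pro:weight tensoriztion} is identical in both.

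Two technical points. First, you say $\|\nabla W\|_{L^\infty} < \infty$ is \emph{essential} for preserving $\kappa_1' < 1$ in $\Delta V_\infty \le \kappa_1'|\nabla V_\infty|^2$; this overstates what is needed. Assumption~\ref{Assumption:W1} alone gives $|\nabla(W\ast\rho_\infty)(x)| \le C(1+|x|)$ and $|\Delta(W\ast\rho_\infty)| \le dC_K$, while Assumption~\ref{Assumption:V2}$(ii)$ with $\theta>0$ together with $V \ge \lambda|x|^2-M$ forces $|\nabla V| \gtrsim |x|^{2\theta+1}$, which strictly dominates $|x|$ at infinity; hence $\nabla(W\ast\rho_\infty)$ and $\Delta(W\ast\rho_\infty)$ are automatically lower order and $\kappa_1$ can be preserved with an arbitrarily small loss, without invoking $\|\nabla W\|_{L^\infty}<\infty$. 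Second, the corollary's hypothesis ``$(i)$ of Assumption~\ref{Assumption:V1}'' cannot be read literally (Assumption~\ref{Assumption:V1} has no clause $(i)$); what your proof, and the subsequent use of the corollary in Theorem~\ref{Thm:theorem1.4}(ii), actually require is Assumption~\ref{Assumption:V2}.
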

\begin{proof}
According to Assumption \ref{Assumption:W1}, $|\nabla W(x)| \leq \frac{\lambda}{2}|x|$, then we have
\begin{equation*}
|W(x)| \leq |W(0)| + \frac{\lambda}{2}|x|^2,    
\end{equation*}
hence
\begin{equation*}
\begin{aligned}
|W \ast \rho_{\infty}(x)| & \leq |W(0)| + \int |y|^2 f_{\infty}(y,v) \ud y \ud 
v + \frac{\lambda}{2}|x|^2 \leq C + \frac{\lambda}{2}|x|^2,
\end{aligned}
\end{equation*}
for some constant $C > 0$ with $\mathbb{E}_{\rho_{\infty}}|x|^2 < \infty$. By $(i)$ of Assumption \ref{Assumption:V1} for $V$, we have
\begin{equation}
a \mu \leq f_{\infty} \leq b \mu, 
\end{equation}
for some $a, b > 0$ and $\mu = \frac{1}{Z} e^{-V(x) - \frac{1}{2}v^2}$. Using Proposition \ref{Pro:perturbation}, we obtain the weighted Log-Sobolev inequality for $f_{\infty}$, then Proposition \ref{Pro:weight tensoriztion} makes sure the same fact about $f_{\infty}^{\otimes N}$. 
\end{proof}

\subsection{Large deviation estimates}\label{section:Large deviation estimates}
In this subsection, we deal with the error terms we mentioned before. For relative entropy, we recall Lemma \ref{lemma:error term of RE} and the error term reads as,
\begin{equation}\label{Quantity:error term of RE}
\frac{4}{\sigma} \sum_{i=1}^N \int_{(\Omega \times \R^d)^N} f^t_N \bigg| \frac{1}{N}\sum_{j,j\neq i}K(x_i-x_j) - K\star \rho_{\infty}(x_i) \bigg|^2 \ud Z.
\end{equation}
For relative fisher information, we recall Corollary $\ref{Cor:block matrix}$ and we estimate the following term,
\begin{equation}\label{Quantity:error term of RFi}
\int_{(\Omega \times \R^d)^N} f^t_N \z C\overline{R}_N, M_t Cu\y \ud Z.
\end{equation}
Our main tool comes from Jabin-Wang's large derivation type estimates for propagation of chaos for singular kernels in series of paper \cite{JFAJW}, \cite{InventionJW}. We refer Theorem 3 in \cite{JFAJW} or Theorem 3 and Theorem 4 in \cite{InventionJW} to readers for more details to this kind of technique, now we apply it to our cases --- bounded or lipschtiz kernels.

The following lemma gives more precise analysis to error term \eqref{Quantity:error term of RFi}. Here we take $C = (\nabla_X, \nabla_V)$.

\begin{lemma}\label{Lemma:CS-error}
Let $M: \Omega \times \R^d \rightarrow \R^{2Nd \times 2Nd}$ be a positive defined matrix function which takes as (\ref{matrix:M}). Assume that $E, F, G$ are block diagonal positive defined matrices, i.e.
$E = \text{diag} \{E^{1},...,E^{N}\}$, $F = \text{diag} \{F^{1},...,F^{N}\}$ and $G = \text{diag} \{G^{1},...,G^{N}\}$, where $E^i, F^i$ and $G^i$ are $d \times d$ positive defined matrices. Then we have
\begin{equation}
\begin{split}
\int f_N \z \nabla \overline{R}_N, M \nabla u\y \leq & \sum_{i=1}^N \frac{4}{\sigma} \int f_N |\sqrt{E^{i}} R_{N,i}^1|^2 + \frac{4}{\sigma}\int f_N |(F^{i})^{\frac{3}{4}}R_{N,i}^1|^2 \\ & 
\sum_{i=1}^N \frac{\gamma^2}{2\sigma^2} \int f_N |\sqrt{E^{i}}R^3_{N,i}|^2 + \frac{\gamma^2}{2\sigma} \int f_N |\sqrt{F^{i}} R^3_{N,i}|^2 \\ &
\sum_{i=1}^N \frac{\gamma^2}{2\sigma^2} \int f_N |\sqrt{F^{i}} R^2_{N,i}|^2 + \frac{4 \gamma^2}{\sigma^3} \int f_N |G^{i} R^2_{N,i}|^2
\\ & + C_K \int f_N |\sqrt{E} \nabla_V u|^2 + (C_K + \frac{1}{2}) \int f_N |\sqrt{F} \nabla_V u|^2 + \frac{\sigma}{4} \int f_N |\nabla_V u|^2 \\ & + (C_K + \frac{1}{2}) \int f_N |\sqrt{E} \nabla_X u|^2 + \frac{1}{4} \int f_N |\sqrt{F} \nabla_X u|^2 \\ & + \sum_{i=1}^N \frac{\sigma}{16} \int f_N | \sqrt{E^{i}} \nabla_{v_i} \nabla_{x_i} u|_{\R^{2d}}^2 + \frac{\sigma}{16} \int f_N |(F^{i})^{\frac{1}{4}} \nabla_{v_i} \nabla_{v_i} u|_{\R^{2d}}^2 \\ & - \sum_{i=1}^N \int f_N \langle R^1_{N,i}, (\nabla_{v_i} E^{i}) \nabla_{x_i} u \rangle_{\R^{2d}} - \int f_N \langle R^1_{N,i}, (\nabla_{v_i} F^{i}) \nabla_{v_i} u \rangle_{\R^{2d}}. 
\end{split}
\end{equation}
where $\sigma$ is diffusion constant, $C_K = \|\nabla K\|_{L^{\infty}}$ and 
\begin{equation}
R^1_{N,i} = \frac{1}{N}\sum_{j\neq i} \nabla K(x_i-x_j) - \nabla K\star \rho_{\infty}(x_i), 
\end{equation}

\begin{equation}
\ \ \ R^3_{N,i} = \frac{1}{N} \sum_{j, j\neq i} \nabla K(x_j - x_i) \cdot [v \ast \rho^v_{\infty}(v_i) - (v_i - v_j)], \ \ \rho^v_{\infty}(v) = \int f_{\infty} \ud x,
\end{equation}

\begin{equation}
R^2_{N,i} = - \frac{1}{N}\sum_{j\neq i}K(x_i-x_j) - K\star \rho_{\infty}(x_i).
\end{equation}
\end{lemma}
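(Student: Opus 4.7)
\textbf{Proof proposal for Lemma \ref{Lemma:CS-error}.}

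The plan is to expand the inner product $\langle \nabla \overline{R}_N, M \nabla u\rangle$ according to the block structure of $M$, compute the derivatives of $\overline{R}_N$ explicitly using the fact that the reference measure is the nonlinear equilibrium $f_\infty$ (so $\nabla_{v_i}\log f_\infty(x_i,v_i) = -\beta v_i$, free of any $x$-dependence), and then apply Young's inequality term-by-term with carefully chosen weights that produce exactly the dissipation pieces $\sigma|\nabla_V u|^2$, $|\sqrt{E}\nabla_V u|^2$, $|\sqrt{F}\nabla_V u|^2$, etc. on the right-hand side and leave the $R^1,R^2,R^3$ terms as the ``error'' residue.

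First, using the block-diagonal form of $E,F,G$, write
\begin{equation*}
\langle \nabla \overline{R}_N, M\nabla u\rangle = \sum_{i=1}^N \bigl\langle \nabla_{x_i}\overline{R}_N,\, E^i \nabla_{x_i} u + F^i \nabla_{v_i} u \bigr\rangle + \sum_{i=1}^N \bigl\langle \nabla_{v_i}\overline{R}_N,\, F^i \nabla_{x_i} u + G^i \nabla_{v_i} u \bigr\rangle.
\end{equation*}
Next compute the two gradient pieces explicitly. Since $\overline{R}_N = -\beta \sum_i v_i\cdot\{\tfrac{1}{N}\sum_{j\neq i}K(x_i-x_j) - K*\rho_\infty(x_i)\}$, the $v_i$-derivative produces (up to constants absorbed into the $\gamma/\sigma$ factors from the $B$-evolution bookkeeping) a term of $R^2_{N,i}$-type, while the $x_i$-derivative produces a term of $R^1_{N,i}$-type multiplied by $v_i$ together with the off-diagonal contributions from $j\neq i$ whose symmetrization yields $R^3_{N,i}$ after using $\nabla K(x_i-x_j) = -\nabla K(x_j-x_i)$ and the identity $v_j - v_i = (v_j - v*\rho_\infty^v(v_i)) + (v*\rho_\infty^v(v_i) - v_i)$, with the second part being controlled by the Gaussian equilibrium in velocity.

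Then for each of the resulting integrals, apply Young's inequality $|\langle a,b\rangle|\leq \tfrac{1}{2\varepsilon}|a|^2 + \tfrac{\varepsilon}{2}|b|^2$ with weights tuned so that the $|b|^2$-side produces either $\sigma|\nabla_V u|^2$ (matching the negative dissipation in \eqref{IEq:evolution of RE1}) or one of $|\sqrt{E^i}\nabla_\bullet u|^2$, $|\sqrt{F^i}\nabla_\bullet u|^2$ (matching the dissipation from the Fisher evolution \eqref{IEq:evlov-Fisher}), with an additional absorbable fraction left over for the second-order terms $|\sqrt{E^i}\nabla_{v_i}\nabla_{x_i}u|^2$ and $|(F^i)^{1/4}\nabla_{v_i}\nabla_{v_i}u|^2$. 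The exact constants $\tfrac{4}{\sigma}, \tfrac{\gamma^2}{2\sigma^2}$, etc. on the error side are forced by the requirement that the dissipation side coefficients come out to $C_K, C_K+\tfrac12, \tfrac14, \tfrac{\sigma}{4}, \tfrac{\sigma}{16}$.

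The main technical obstacle I expect is the treatment of the cross-interaction term in $\nabla_{x_i}\overline{R}_N$: because the same $K(x_i-x_j)$ contributes both to the $i$-th and $j$-th summands of $\overline{R}_N$, one must carefully re-index and integrate by parts in $v_i$ (picking up the matrix derivatives $\nabla_{v_i}E^i$, $\nabla_{v_i}F^i$ that appear in the last line of the conclusion) to convert the raw sum into the symmetric $R^3_{N,i}$ form where the shift by $v*\rho_\infty^v(v_i)$ guarantees that a second-order LLN-type bound will be available later (Section \ref{section:Large deviation estimates}). After this rearrangement, all remaining cross-derivative terms fall into one of the dissipation buckets via one further application of Young, and summing over $i$ yields the stated inequality.
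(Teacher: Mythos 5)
Your proposal follows essentially the same route as the paper: expand $\langle \nabla\overline R_N, M\nabla u\rangle$ over the block-diagonal $E,F,G$, exploit the special structure of $f_\infty$ (namely $\nabla_{v_i}\log f_\infty = -\tfrac{\gamma}{\sigma}v_i$ and $\nabla_{x_i}\nabla_{v_i}\log f_\infty = 0$) to identify the three error tensors $R^1_{N,i}, R^2_{N,i}, R^3_{N,i}$ in $\nabla_{x_i}\overline R_N, \nabla_{v_i}\overline R_N$, then split each pairing by Young's inequality into a dissipation bucket plus an LLN-type residue, keeping the second-order pieces $|\sqrt{E^i}\nabla_{v_i}\nabla_{x_i}u|^2$ and $|(F^i)^{1/4}\nabla_{v_i}\nabla_{v_i}u|^2$ to be absorbed later by the $-2\sigma\int f_N\langle CAu, MCAu\rangle$ term. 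Two details are slightly misplaced: the minus sign on the cross-term $-\nabla K(x_j - x_i)$ comes purely from the chain rule $\partial_{x_i}K(x_j-x_i)$, not from any antisymmetry of $\nabla K$ (in fact for $K=-\nabla W$ with $W$ even, $\nabla K$ is \emph{even}); and the $v_i$-integration by parts that produces $\nabla_{v_i}E^i$ and $\nabla_{v_i}F^i$ is applied to the $R^1$ piece (after writing $\nabla_{v_i}\log f_\infty = \nabla_{v_i}\log f_N - \nabla_{v_i}u$ and moving the $\nabla_{v_i}f_N$ to the other factor), not to generate the $R^3$ form, which is just re-indexing plus the mean-zero recentering through $v*\rho^v_\infty(v_i)$. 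Neither slip blocks the argument.
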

\begin{remark}
Here we understand $\nabla K$ or $R^1_{NX,i}$ as a $d \times d$ matrix, $\nabla K \cdot v$ and $K$ are $d$ dimensional vectors, we omit the summation about these components for convenience. The meaning of this lemma is to divide the error term of relative Fisher Information into $L^2$ norm of $R_N^i, i=1,2,3$ and small terms we can absorb by negative part in \eqref{IEq:evlov-Fisher}.
\end{remark}
\begin{proof}
We directly compute $\nabla \overline{R}_N$ and use the Young inequality. For each component $i$ in position direction, we have
\begin{equation}\label{Eq: error term-FI-X}
\begin{split}
\nabla_{x_i} \overline{R}_N & = \nabla_{v_i}\log f(x_i, v_i) \cdot \bigg \{\frac{1}{N} \sum_{j=1,j\neq i}^N \nabla K(x_i - x_j) - \nabla K\star \rho(x_i) \bigg\}  \\ &  \ \ \ + \nabla_{x_i} \nabla_{v_i} \log f(x_i,v_i) \cdot \bigg\{ \frac{1}{N} \sum_{j=1,j\neq i}^N K(x_i-x_j) - K\star \rho(x_i) \bigg\} \\ & \ \ \ - \frac{1}{N} \sum_{j=1,j\neq i}^N \nabla K(x_j - x_i) \cdot \nabla_{v_j} \log f(x_j, v_j),
\end{split}
\end{equation}
we denote these three lines above by
\begin{equation*}
\nabla_{x_i} \overline{R}_N = \nabla_{v_i} \log f(x_i,v_i) \cdot R^1_{N,i} + \nabla_{x_i} \nabla_{v_i} \log f(x_i,v_i) \cdot R^2_{N,i} + \frac{\gamma}{\sigma} R^3_{N,i},   
\end{equation*}
where
\begin{equation*}
\begin{split}
R^1_{N,i} = \bigg\{\frac{1}{N}\sum_{j=1,j\neq i}^N \nabla K(x_i-x_j) - \nabla K\star \rho_{\infty}(x_i) \bigg\}
\end{split}
\end{equation*}
is a $d \times d$ matrix,
\begin{equation*}
\begin{split}
R_{N,i}^2 = \bigg\{ \frac{1}{N}\sum_{j=1,j\neq i}^N K(x_i-x_j) - K\star \rho_{\infty}(x_i) \bigg\}
\end{split}
\end{equation*}
is a $d$ dimensional vector and 
\begin{equation*}
\begin{split}
R^3_{N,i} & =  - \frac{\sigma}{\gamma} \frac{1}{N} \sum_{j=1,j\neq i}^N \nabla K(x_j - x_i) \cdot \nabla_{v_j} \log f(x_j, v_j)
\end{split}
\end{equation*}
is a $d$ dimensional vector. Recall that we take $M$ as a block matrix 
\begin{equation*}
M = 
\left(
\begin{array}{cc}
E  & F \\
F  & G
\end{array}
\right),
\end{equation*}
and $E, F, G$ are $d \times d$ diagonal matrices. Hence, we use integral by parts for each component of the first term of (\ref{Eq: error term-FI-X}), we have
\begin{equation}\label{IEq:error term XX1}
\begin{aligned}
& \sum_{i=1}^N \int f_N \langle \nabla_{v_i} \log f \cdot R_{N,i}^1, E^i \nabla_{x_i} u \rangle \\ = & - \sum_{i=1}^N \int f_N \langle \nabla_{v_i} u \cdot R_{N,i}^1, E^i \nabla_{x_i} u \rangle + \sum_{i=1}^N \int f_N \langle \nabla_{v_i} \log f_N \cdot R_{N,i}^1, E^i \nabla_{x_i} u \rangle \\ = & - \sum_{i=1}^N \int f_N \langle \nabla_{v_i} u \cdot R_{N,i}^1, E^i \nabla_{x_i} u \rangle + \sum_{i=1}^N \int \langle \nabla_{v_i} f_N \cdot R^1_{N,i}, E^i \nabla_{x_i} u \rangle \\ = & - \sum_{i=1}^N \int f_N \langle \nabla_{v_i} u \cdot R_{N,i}^1, E^i \nabla_{x_i} u \rangle - \sum_{i=1}^N \int f_N \langle R^1_{N,i}, E^i \nabla_{v_i} \nabla_{x_i} u \rangle_{\R^{2d}} \\ & - \sum_{i=1}^N \int f_N \langle R^1_{N,i}, (\nabla_{v_i} E^i) \nabla_{x_i} u \rangle_{\R^{2d}}.
\end{aligned}
\end{equation}
Using Young inequality, we have, 
\begin{equation}
\begin{aligned}
& \sum_{i=1}^N \int f_N \langle \nabla_{v_i} \log f \cdot R_{N,i}^1, E^i \nabla_{x_i} u \rangle \\ \leq & C_K \int f_N |\sqrt{E} \nabla_V u|^2 + C_K \int f_N |\sqrt{E} \nabla_X u|^2 \\ & + \sum_{i=1}^N \frac{\sigma}{16} \int f_N | \sqrt{E^i} \nabla_{v_i} \nabla_{x_i} u|_{\R^{2d}}^2 + \sum_{i=1}^N \frac{4}{\sigma} \int f_N |\sqrt{E^i} R_{N,i}^1|_{\R^{2d}}^2 \\ & - \sum_{i=1}^N \int f_N \langle R^1_{N,i}, (\nabla_{v_i} E^i) \nabla_{x_i} u \rangle_{\R^{2d}},
\end{aligned}
\end{equation}
similarly,
\begin{equation}\label{IEq:error term XV1}
\begin{split}
& \sum_{i=1}^N \int f_N \langle \nabla_{v_i} \log f \cdot R_{N,i}^1, F^i \nabla_{v_i} u \rangle  \\ \leq & C_K \int f_N |\sqrt{F} \nabla_V u|^2 \\ & + \sum_{i=1}^N \frac{\sigma}{16} \int f_N | (F^i)^{\frac{1}{4}} \nabla_{v_i} \nabla_{v_i} u|_{\R^{2d}}^2 + \sum_{i=1}^N \frac{4}{\sigma} \int f_N |(F^i)^{\frac{3}{4}} R_{N,i}^1|_{\R^{2d}}^2 \\ & - \sum_{i=1}^N \int f_N \langle R^1_{N,i}, (\nabla_{v_i} F^i) \nabla_{v_i} u \rangle_{\R^{2d}}.
\end{split}
\end{equation}
Actually the computation above is the same if we take $f = f_{\infty}$, then we finish the estimates. For the third term of (\ref{Eq: error term-FI-X}), by Young inequality, we have
\begin{equation}\label{IEq:error term XX3}
\begin{split}
\sum_{i=1}^N \int f_N \langle R^3_{N,i}, E^i \nabla_{x_i} u \rangle \leq \sum_{i=1}^N \frac{\gamma^2}{2\sigma^2} \int f_N |\sqrt{E^i}R^3_{N,i}|^2 + \frac{1}{2} \int f_N |\sqrt{E} \nabla_X u|^2,
\end{split}
\end{equation}
\begin{equation}\label{IEq:error term XV3}
\begin{split}
\sum_{i=1}^N \int f_N \langle R^3_{N,i}, F^i \nabla_{v_i} u \rangle \leq \sum_{i=1}^N \frac{\gamma^2}{2\sigma^2} \int f_N |\sqrt{F^i} R^3_{N,i}|^2 + \frac{1}{2} \int f_N |\sqrt{F} \nabla_V u|^2,
\end{split}
\end{equation}
moreover, we rewrite $R^3_{N,i}$ as 
\begin{equation*}
\begin{aligned}
R^3_{N,i} = \frac{1}{N} \sum_{j=1, j \neq i}^N \nabla K(x_j - x_i) \cdot [v \ast \rho_{\infty}^{v} - (v_i - v_j)], \ \ \ \rho_{\infty}^v(v) = \int f_{\infty}dx,
\end{aligned}
\end{equation*}
by $\nabla_{v_i} \log f_{\infty} = -\frac{\gamma}{\sigma} v_i$. In the end, the second term  $\nabla_{x_i} \nabla_{v_i} \log f \cdot R^2_{N,i}$ disappears by $\nabla_{x_i} \nabla_{v_i} \log f_{\infty} = 0$ if we take $f = f_{\infty}$. Now we finish the estimates of all error terms in position direction. \\ 

For each component $i$ in velocity direction, we simply have only one term
\begin{equation*}
\nabla_{v_i} \overline{R}_N = \nabla_{v_i} \nabla_{v_i} \log f \cdot \bigg\{ \frac{1}{N}\sum_{j\neq i}K(x_i-x_j) - K\star \rho(x_i) \bigg\} = \nabla_{v_i} \nabla_{v_i} \log f \cdot R^2_{N,i}.
\end{equation*}
We take $\nabla_{v_i} \nabla_{v_i} \log f(x_i,v_i) = \nabla_{v_i} \nabla_{v_i} \log f_{\infty}(x_i,v_i) = - \frac{\gamma}{\sigma} Id_{d\times d}$ and use Young inequality, then we obtain
\begin{equation}\label{IEq:error term VX}
\begin{split}
& \sum_{i=1}^N \int f_N \langle \nabla_{v_i} \nabla_{v_i} \log f_{\infty} \cdot  R^2_{N,i}, F^i \nabla_{x_i} u \rangle \\ \leq & \sum_{i=1}^N \int f_N |\nabla_{v_i} \nabla_{v_i} \log f_{\infty} \cdot \sqrt{F^i} R^2_{N,i}|^2 + \frac{1}{4} \int f_N |\sqrt{F} \nabla_X u|^2,
\end{split}
\end{equation}
\begin{equation}\label{IEq:error term VV}
\begin{split}
& \sum_{i=1}^N \int f_N \langle \nabla_{v_i} \nabla_{v_i} \log f_{\infty} \cdot R^2_{N,i}, G^i \nabla_{v_i} u \rangle \\ \leq & \sum_{i=1}^N \frac{1}{\sigma}\int f_N |\nabla_{v_i} \nabla_{v_i} \log f_{\infty} \cdot G^i R^2_{N,i}|^2 + \frac{\sigma}{4} \int f_N |\nabla_V u|^2,
\end{split}
\end{equation}
now we finish the estimates of error terms on velocity direction.

Finally, we collect all terms of (\ref{IEq:error term XX1}), (\ref{IEq:error term XV1}), (\ref{IEq:error term XX3}), (\ref{IEq:error term XV3}), (\ref{IEq:error term VX}) and(\ref{IEq:error term VV}), we finish the proof.
\end{proof}

\begin{remark}\label{remark: nabla^2-log-f}
For error terms of relative Fisher Information, the most difficult part is to show the uniform-in-time estimates for the following terms,
\begin{equation*}
\sum_{i,k=1}^N \int f_N \langle \nabla_{v_i} \nabla_{v_i} \log f \cdot R^2_{N,i}, M_t^{ik} C_k u \rangle,
\end{equation*}
\begin{equation*}
\sum_{i,k=1}^N \int f_N \langle \nabla_{x_i} \nabla_{v_i} \log f \cdot R^2_{N,i}, M_t^{ik} C_k u \rangle.
\end{equation*}
It is not easy to deal with the terms $\nabla_{v_i} \nabla_{v_i} \log f$ and $\nabla_{x_i} \nabla_{v_i} \log f$ by uniform-in-time way, that is one of the most important reason we select the reference measure as $f_{\infty}$. It is natural to conjecture that $|\nabla^2 \log f| \leq C(1 + \frac{1}{2}v^2 + V(x))^k$ for some positive order $k>0$. We might explore how to control these terms better in further study.
\end{remark}

Now let us focus on three error terms in Lemma \ref{Lemma:CS-error}. We rewrite them as following as Theorem 3 in \cite{InventionJW} has talked about,
\begin{equation}\label{Quantity:error1}
\sum_{i=1}^N \int f_N |R^1_{N,i}|^2 = \int f_N \bigg|\frac{1}{\sqrt{N}}\sum_{j,j\neq 1} \nabla K(x_1-x_j) - \nabla K\star \rho_{\infty}(x_1) \bigg|^2, 
\end{equation}

\begin{equation}\label{Quantity:error2}
\sum_{i=1}^N \int f_N |R^3_{N,i}|^2 = \int f_N \bigg| \frac{1}{\sqrt{N}} \sum_{j, j\neq 1} \nabla K(x_j - x_1) \cdot [v \ast \rho^v_{\infty}(v_1) - (v_1 - v_j)]\bigg|^2,
\end{equation}

\begin{equation}\label{Quantity:error3}
\sum_{i=1}^N \int f_N |R^2_{N,i}|^2 = \int f_N \bigg| \frac{1}{\sqrt{N}}\sum_{j,j\neq 1}K(x_1-x_j) - K\star \rho_{\infty}(x_1) \bigg|^2,
\end{equation}
for terms (\ref{Quantity:error1}), (\ref{Quantity:error2}) and (\ref{Quantity:error3}), we define 
\begin{equation*}
\psi_1(x_1,x_j) = \nabla K(x_1-x_j) - \nabla K\star \rho_{\infty}(x_1),
\end{equation*}

\begin{equation*}
\psi_2(x_1,x_j) = K(x_1-x_j) - K\star \rho_{\infty}(x_1),
\end{equation*}

\begin{equation*}
\psi_3(v_1,v_j) = \nabla K(x_j - x_1) \cdot [v \ast \rho^v_{\infty}(v_1) - (v_1 - v_j)].
\end{equation*}
We also define the $2p$ moment of $f_{\infty}$ for any $p \geq 1$,
\begin{equation}
M_p = \left( \int f_{\infty} (1 + \frac{1}{2}v^2 + \frac{1}{2}x^2)^p \right)^{\frac{1}{p}}. 
\end{equation}
By similar argument in Section 1.4 in \cite{JFAJW}, we have
\begin{equation}
\sup_p \frac{M_p}{p} := \Lambda < \frac{1}{e \lambda},
\end{equation}
where $\lambda$ satisfies
\begin{equation}
\int f_{\infty} e^{\lambda(1 + 
\frac{1}{2}v^2 + \frac{1}{2}x^2)} \ud z < \infty,
\end{equation}
We easily take $\lambda \in (0,\beta)$ in the following.

By Lemma 1 in \cite{InventionJW}, i.e. Gibbs inequality, we have 
\begin{equation*}
\frac{1}{N} \int f_N |R^k_N|^2 dZ \leq \frac{1}{\nu}H_N(f_N| \bar{f_N}) + \frac{1}{N} \int \bar{f_N} \exp{(\nu |R^k_N|^2)} \ud Z
\end{equation*}
for some $\nu > 0$ and $k = 1,2,3$, it is sufficient to show that
\begin{equation}\label{Quantity:psi-12}
\int \bar{f_N} \exp \bigg(
\frac{\nu}{N} \sum_{j_1,j_2=1}^N \psi_i(x_1, x_{j_1}) \psi_i(x_1,x_{j_2})\bigg) \ud Z < \infty, \ \ \ i = 1,2,
\end{equation}
and
\begin{equation}\label{Quantity:psi-3}
\int \bar{f_N} \exp \bigg(
\frac{\nu}{N} \sum_{j_1,j_2=1}^N \psi_3(v_1, v_{j_1}) \psi_3(v_1,v_{j_2})\bigg) \ud Z < \infty
\end{equation}
for some $\nu > 0$. We observe that $\psi_1(x_1,x_j)$ is bounded by Assumption \ref{Assumption:W1} and $\int \psi_1(z,x) dx = 0$, which implies that we can directly use  Theorem 3 in \cite{InventionJW} for this type of $\psi_1(x_1 - x_j)$. We recall  the following proposition in \cite{InventionJW},
\begin{proposition}\label{proposition:LDE1}
Assume that $\psi \in L^{\infty}$ with $\| \psi\|_{L^{\infty}} < \frac{1}{2e}$, and for any fixed $y$,
\begin{equation*}
\int \psi(y,x) f_{\infty} \ud z = 0,
\end{equation*}
then 
\begin{equation}
\int \bar{f_N} \exp \bigg(
\frac{1}{N} \sum_{j_1,j_2=1}^N \psi(x_1, x_{j_1}) \psi(x_1,x_{j_2})\bigg) \ud Z < C,
\end{equation}
where $\bar{f_N} = f_{\infty}^{\otimes N}$ and $C > 0$ is independent of $N$.
\end{proposition}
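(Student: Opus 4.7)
The plan is to isolate the $x_1$-integration via Fubini, reduce to bounding an exponential moment of a centered linear statistic uniformly in the frozen variable, and then apply the Jabin-Wang moment method. By Fubini and the elementary identity $\sum_{j_1,j_2=1}^N\psi(y,x_{j_1})\psi(y,x_{j_2})=\bigl(\sum_{j=1}^N\psi(y,x_j)\bigr)^2$, we may write
\[
\int \bar{f}_N\exp\!\Bigl(\tfrac{1}{N}\!\!\sum_{j_1,j_2=1}^N\!\!\psi(x_1,x_{j_1})\psi(x_1,x_{j_2})\Bigr)\mathrm{d}Z
\;=\;\int f_\infty(z_1)\,G_N(x_1)\,\mathrm{d}z_1,
\]
where, writing $y$ for $x_1$,
\[
G_N(y)\;:=\;\int f_\infty^{\otimes(N-1)}\exp\!\Bigl(\tfrac{1}{N}\bigl(\textstyle\sum_{j=1}^N\psi(y,x_j)\bigr)^2\Bigr)\prod_{j=2}^N\mathrm{d}z_j.
\]
Since $f_\infty$ is a probability density, it suffices to prove that $\sup_{N\ge 1,\,y}G_N(y)<\infty$.

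The next step is to expand the exponential as a power series and each resulting polynomial as a sum over multi-indices:
\[
G_N(y)=\sum_{k\ge 0}\frac{1}{k!\,N^k}\sum_{(j_1,\ldots,j_{2k})\in\{1,\ldots,N\}^{2k}}\int f_\infty^{\otimes(N-1)}\prod_{i=1}^{2k}\psi(y,x_{j_i})\prod_{j=2}^N\mathrm{d}z_j.
\]
The key observation is that the centering assumption $\int\psi(y,x)f_\infty(z)\,\mathrm{d}z=0$ forces any multi-index in which some index $j\in\{2,\ldots,N\}$ appears with multiplicity exactly $1$ to contribute $0$ to the integral. Hence only multi-indices in which every index from $\{2,\ldots,N\}$ appears with multiplicity $0$ or $\ge 2$ survive; the frozen index $1$ is allowed to appear with arbitrary multiplicity, contributing a bounded factor $\psi(y,y)$ per occurrence.

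The heart of the proof is then the combinatorial count of surviving tuples, which is exactly the content of Theorem 3 in \cite{InventionJW}. Grouping by the number $m\le k$ of distinct free values used, the number of surviving tuples is bounded by $\sum_{m\le k}\binom{N-1}{m}m^{2k}$, whose leading-in-$N$ behavior is $N^k k^{2k}/k!$. Combined with the uniform bound $|\psi|\le\|\psi\|_{L^\infty}$ and Stirling's estimate $k!\ge(k/e)^k$, each $k$-th term in the series is dominated by a constant times $(e\|\psi\|_{L^\infty})^{2k}$ (with room to spare from the frozen-index combinatorics and subleading $m<k$ contributions). The smallness hypothesis $\|\psi\|_{L^\infty}<\frac{1}{2e}$ is therefore precisely what is needed to obtain a geometrically convergent series whose sum is bounded independently of $N$ and $y$.

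The main obstacle is the combinatorial bookkeeping: one must carefully check that tuples using fewer than $k$ distinct free values and tuples with repeated occurrences of the frozen index $1$ do not spoil the cancellation between the $N^{-k}$ prefactor and the $N^k$ growth of the count of surviving tuples. This delicate but standard calculation is carried out in detail in the proof of Theorem 3 of \cite{InventionJW}; I would simply invoke that theorem after verifying the two hypotheses ($\psi\in L^\infty$ with small norm, and $\psi(y,\cdot)$ centered under $f_\infty$), both of which are assumed directly.
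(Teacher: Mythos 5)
Your proposal is correct and ultimately takes the same route as the paper: the paper's entire proof of this proposition is a one-line citation of Theorem~3 in \cite{InventionJW}, and you also close by invoking that theorem after checking its two hypotheses (bounded $\psi$ with small norm, and the cancellation $\int\psi(y,\cdot)f_\infty=0$). The Fubini reduction, the series expansion, and the multiplicity-$\ge 2$ cancellation you sketch are precisely the internal mechanism of that theorem, so you have added useful exposition but not a genuinely different argument.

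One small caution on your heuristic count: your estimate that the $k$-th term is of size $(e\|\psi\|_{L^\infty})^{2k}$ would only require $\|\psi\|_{L^\infty}<1/e$, whereas the stated (and needed) hypothesis is $\|\psi\|_{L^\infty}<1/(2e)$; the missing factor of $2$ comes from the careful combinatorics (the split over $m\le k$, the Cauchy--Schwarz step that separates the $2k$-tuple into two halves, and the frozen-index occurrences) that you wave at with ``room to spare'' but that actually consume that slack. Since you defer to \cite{InventionJW} for the precise accounting, this does not invalidate the proposal, but it is worth recognizing that the $1/(2e)$ threshold is not generous.
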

\begin{proof}
Theorem 3 of \cite{InventionJW}.
\end{proof}
Now let $\psi = \nu \psi_1$ for some $\nu > 0$, we obtain the desired result. For the cases $\psi_{2}$ and $\psi_{3}$, we prove the following proposition:
\begin{proposition}\label{proposition:LDE2}
Assume that $|\psi(m,n)| \leq C(1 + |n|)$ with $C \leq \frac{1}{8e^2 \Lambda}$, and for any fixed $m$, $\int \psi(m,n) f_{\infty}dz = 0$. Here $(m,n)$ takes position pair $(y,x) \in \Omega \times \Omega$ or velocity pair $(u,v) \in \R^d \times \R^d$, then we have
\begin{equation}
\int \bar{f_N} \exp \bigg (
\frac{1}{N} \sum_{j_1,j_2=1}^N \psi(m_1, n_{j_1}) \psi(m_1,n_{j_2})\bigg) \ud Z < C,
\end{equation}
where $\bar{f_N} = f_{\infty}^{\otimes N}$ and $C > 0$ is independent of $N$.
\end{proposition}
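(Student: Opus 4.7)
The plan is to adapt the combinatorial cancellation argument behind Proposition \ref{proposition:LDE1} (Theorem 3 of \cite{InventionJW}) from the $L^\infty$ setting to one where $\psi$ has linear growth in $n$, by replacing uniform bounds with the finite exponential moment of $f_\infty$ encoded in $M_p \leq \Lambda p$. First I would Taylor expand the exponential:
\begin{equation*}
\int \bar{f_N} \exp\Big(\tfrac{1}{N}\sum_{j_1,j_2=1}^N \psi(m_1, n_{j_1})\psi(m_1, n_{j_2})\Big) dZ = \sum_{k=0}^\infty \frac{1}{k!\,N^k}\sum_{\mathbf{j}}\int \bar{f_N} \prod_{\ell=1}^k \psi(m_1, n_{j_1^{(\ell)}})\psi(m_1, n_{j_2^{(\ell)}}) dZ,
\end{equation*}
where $\mathbf{j}$ runs over multi-indices in $\{1,\ldots,N\}^{2k}$, and regroup the sum by the multiplicity profile $(\mu_i)_{i=1}^N$, with $\mu_i$ counting occurrences of $i$ in $\mathbf{j}$.

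Because $\bar{f_N} = f_\infty^{\otimes N}$ factorizes and the cancellation $\int \psi(m_1, n) f_\infty dz = 0$ holds pointwise in $m_1$, any profile in which some index $i \neq 1$ has $\mu_i = 1$ makes the corresponding $z_i$-integral vanish. Only profiles with $\mu_i \in \{0\}\cup\{2,3,\dots\}$ for $i \neq 1$ (and $\mu_1$ arbitrary) contribute. For each surviving profile I would bound
\begin{equation*}
\Big|\int f_\infty(z_i)\psi(m_1, n_i)^{\mu_i}\,dz_i\Big| \leq C^{\mu_i}\int f_\infty(1+|n_i|)^{\mu_i}dz_i \leq (2C)^{\mu_i} M_{\mu_i}^{\mu_i} \leq (2eC\Lambda)^{\mu_i}\mu_i!,
\end{equation*}
using $(1+|n|)^{\mu} \leq 2^\mu (1+\tfrac12 v^2+\tfrac12 x^2)^{\mu/2}$ together with $M_p \leq \Lambda p$ and Stirling's inequality $p^p \leq e^p p!$; the $z_1$ integral, involving $\psi(m_1, n_1)^{\mu_1}$, is handled similarly against $f_\infty(z_1)$ to yield a finite contribution uniform in $m_1$ (indeed uniform after integration over $z_1$).

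Then I would count: the number of multi-indices realizing a fixed profile supported on $q$ distinct non-unit particles is at most $\binom{N-1}{q}\binom{2k}{\mu_1,\mu_{i_1},\ldots,\mu_{i_q}} \leq \frac{N^q (2k)!}{q!\,\mu_1!\prod_{r} \mu_{i_r}!}$. The factor $N^q$ cancels against $N^k$ because $\sum_{i\neq 1}\mu_i \geq 2q$, and the multinomial factorials cancel against the $\mu_i!$ arising from the moment estimate above. Summing first over all profiles with fixed $(k,q)$ and then over $k$ and $q$ collapses the double series to a geometric sum in a variable proportional to $C\Lambda$. The smallness condition $C \leq 1/(8e^2\Lambda)$ is tailored precisely so that this geometric ratio is at most $1/2$, giving a bound uniform in $N$.

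The main obstacle is the precise bookkeeping of factorials: aligning the $k!$ in the Taylor denominator, the multinomial counts of multi-indices with each non-unit index of multiplicity at least two, and the factorial growth $M_p^p \sim e^p p!\,\Lambda^p$ from the moments, so that everything collapses into a single geometric series. This is the same delicate accounting that produces the threshold $\|\psi\|_\infty < 1/(2e)$ in Proposition \ref{proposition:LDE1}; the extra factor of $4e\Lambda$ in our threshold arises exactly from absorbing the linear growth of $\psi$ through the moments of $f_\infty$ rather than through a uniform bound.
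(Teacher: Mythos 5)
Your overall strategy --- expand the exponential, exploit the centering condition to kill profiles with a singleton multiplicity, bound each surviving factor by the moment estimate $M_p\leq\Lambda p$ together with $p^p\leq e^p p!$, then re-sum --- is the right circle of ideas, and it is essentially what the paper does. However, there is a concrete gap in the factorial bookkeeping that the proposal papers over, and it is fatal to the argument as written.

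You expand $\exp(A)=\sum_k\frac{1}{k!}A^k$ with $A=\frac{1}{N}\sum_{j_1,j_2}\psi\psi$, so the $k$-th term carries a factor $\frac{1}{k!}$ in front of a sum over $2k$ indices. For a fixed surviving profile $(\mu_i)$, the number of multi-indices realising it is the multinomial $\frac{(2k)!}{\prod_i\mu_i!}$, while your moment estimate contributes $\prod_i(2eC\Lambda)^{\mu_i}\mu_i!$. The $\prod\mu_i!$ indeed cancel, but what is left is a bare $(2k)!$; against the $\frac{1}{k!}$ available you get $(2k)!/k!\sim 4^k k!$, and the profile count only adds a further exponential factor. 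The $k$-th term therefore grows at least like $(\mathrm{const}\cdot C^2\Lambda)^k\,k!$ and the series diverges for \emph{every} $C>0$, not just large $C$. The "collapse to a geometric series" you announce at the end does not happen with this expansion.

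The paper does two extra things to make the factorials balance, and you will need both. First, it uses $\exp(A)\leq 2\sum_k\frac{A^{2k}}{(2k)!}$, so that the denominator is $(2k)!$ while the index count is $4k$. Second, it applies $ab\leq\frac12(a^2+b^2)$ to the product of $4k$ factors $\psi$, reducing to a sum of two integrals of $2k$ squared factors $|\psi|^2$; the resulting combinatorial quantity $U^l_{N,2k}=\sum\frac{(2k)!}{a_1!\cdots a_l!}a_1^{a_1}\cdots a_l^{a_l}$ then produces exactly a $(2k)!$ which cancels the denominator, leaving only the exponential factor $(2e)^{2k}$ that gives you a genuinely geometric series once $C$ is small. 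Finally, the paper treats the regime $3k>N$ separately by a direct Cauchy--Schwarz bound $\frac{1}{N}\sum_{j_1,j_2}\psi\psi\leq C^2\sum_j(1+|n_j|)^2$, because the "at least two distinct non-unit indices with multiplicity~$\geq 2$" counting degenerates when $k$ is comparable to $N$; your proposal does not anticipate this case split and the counting bound $\binom{N-1}{q}/N^q\leq 1/q!$ alone is not enough there. Incorporating the even-term trick, the Cauchy--Schwarz halving, and the $k$ versus $N$ dichotomy would turn your sketch into the paper's proof.
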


\begin{proof}
We use $(m,n)$ to denote position pair $(y,x) \in \Omega \times \Omega$ or velocity pair $(u,v)$, and we show the proof of these two cases together. Since
\begin{equation*}
\exp(A) \leq \exp(A) + \exp(-A) = 2 \sum_{k=0}^{\infty} \frac{1}{(2k)!} A^{2k}, 
\end{equation*}
it suffices only to bound the series with even terms,
\begin{equation*}
2 \sum_{k=0}^{\infty} \frac{1}{(2k)!} \int \bar{f_N} \bigg(
\frac{1}{N} \sum_{j_1,j_2=1}^N \psi(m_1, n_{j_1}) \psi(m_1,n_{j_2}) \bigg)^{2k} \ud Z,
\end{equation*}
where in general the $k$-th term can be expanded as
\begin{equation}\label{IEq:proposition4-expand}
\begin{split}
& \frac{1}{(2k)!} \int \bar{f_N} \bigg(
\frac{1}{N} \sum_{j_1,j_2=1}^N \psi(m_1, n_{j_1}) \psi(m_1,n_{j_2})\bigg)^{2k} \ud Z \\ = & \frac{1}{(2k)!} \frac{1}{N^{2k}} \sum_{j_1,...,j_{4k}=1}^N
\int \bar{f_N} \psi(m_1,n_{j_1})... \psi(m_1,n_{j_{4k}}) \ud Z.
\end{split}
\end{equation}
Now we divide the proof in two different cases: where $k$ is small compared to $N$ and in the simpler case where $k$ is comparable to or larger than $N$. \\

$Case: 3 \leq 3k \leq N.$ In this case, for each term of \eqref{IEq:proposition4-expand}, we have
\begin{equation}
\int \bar{f_N}\psi(m_1,n_{j_1})... \psi(m_1,n_{j_{4k}})dZ \leq C^{4k} \int \bar{f_N}(1 + |n_{j_1}|)...(1 + |n_{j_{4k}}|) \ud Z ,
\end{equation}
since we have cancellation condition 
$\int \psi(m,n) f_{\infty}dz = 0$, the whole estimates relies on how many choices of multi-indices $(j_1,...,j_{4k})$ leading to a non-vanishing term. By notations of Theorem 3 and Theorem 4 in \cite{JFAJW}, we could denote the set of multi-indices $(j_1,...,j_{4k})$ s.t.
\begin{equation}
\int \bar{f_N} \psi(m_1,n_{j_1})... \psi(m_1,n_{j_{4k}}) \ud Z \neq 0,
\end{equation}
by $\mathcal{E}_{N-1,4k}$ since $j_i \neq 1$ for all $1 \leq i \leq 4k$, we also denote $(a_1,...,a_N)$ the multiplicity for $(j_1,...,j_{4k})$, i.e.
\begin{equation}
a_l = |\{ \nu \in \{1,...,4k\}, j_{\nu} = l\}|.
\end{equation}
Actually, if there exists $l \neq 1$ s.t. $a_l = 1$, then the variable $x_l$ enters exactly once in the integration, assume for simplicity that $j_1 = l$ then
\begin{equation*}
\begin{split}
&\int \bar{f_N}\psi(m_1,n_{j_1})... \psi(m_1,n_{j_{4k}}) \ud Z \\ = & \int \psi(m_1,n_{j_1})... \psi(m_1,n_{j_{4k}})\Pi_{i\neq l} f_{\infty}(z_i) \int \psi(m_1,n_l)f_{\infty}(z_l) = 0,
\end{split}
\end{equation*}
by the assumption of vanishing condition for $\psi$, provided $l = j_1 \neq 1$. Using these notations, we have 
\begin{equation}\label{IEq:CS-4k}
\begin{split}
& \frac{1}{(2k)!} \frac{1}{N^{2k}} \sum_{j_1,...,j_{4k} = 1}^N \int \bar{f_N} \psi(m_1,n_{j_1})... \psi(m_1,n_{j_{4k}}) \ud Z \\ = &  \frac{1}{(2k)!} \frac{1}{N^{2k}} \sum_{(j_1,...,j_{4k}) \in \mathcal{E}_{N-1,4k}} \int \bar{f_N} \psi(m_1,n_{j_1})... \psi(m_1,n_{j_{4k}}) \ud Z \\ \leq &  \frac{1}{(2k)!} \frac{1}{N^{2k}} \sum_{(j_1,...,j_{4k}) \in \mathcal{E}_{N-1,4k}} \Bigg( \sum_{(j_{2k+1},...,j_{4k}) \in \mathcal{E}_{N-1,4k}} 1 \Bigg) \int \bar{f_N} |\psi(m_1,n_{j_1})|^2... |\psi(m_1,n_{j_{2k}})|^2 \ud Z \\ & +  \frac{1}{(2k)!} \frac{1}{N^{2k}} \sum_{(j_1,...,j_{4k}) \in \mathcal{E}_{N-1,4k}} \Bigg( \sum_{(j_1,...,j_{2k}) \in \mathcal{E}_{N-1,4k}} 1 \Bigg) \int \bar{f_N} |\psi(m_1,n_{j_{2k+1}})|^2... |\psi(m_1,n_{j_{4k}})|^2 \ud Z,
\end{split}
\end{equation}
where $\left( \sum_{(j_{2k+1},...,j_{4k}) \in \mathcal{E}_{N-1,4k}} 1 \right)$ for the first line, we mean that for each fixed former $2k$ indices in $(j_1,...,j_{4k}) \in \mathcal{E}_{N-1,4k}$, we add all possible cases about later $2k$ indices in $\mathcal{E}_{N-1,4k}$, moreover, we have
\begin{equation}
\begin{split}
\int \bar{f_N} |\psi(m_1, n_{j_1})|^2...|\psi(m_1, n_{j_{2k}})|^2dZ = & C^{4k}\int \bar{f_N} [(1 + |n_2|)^2]^{a_2}...[(1 + |n_N|)^2]^{a_N} \ud Z \\ = &  C^{4k} M_{a_1}^{a_1}M_{a_2}^{a_2}...M_{a_N}^{a_N} \leq (C\Lambda)^{2k} a_1^{a_1}...a_N^{a_N},
\end{split}
\end{equation}
with the convention that $0^0 = 1$, then we have
\begin{equation*}
\begin{split}
& \frac{1}{(2k)!} \frac{1}{N^{2k}} \sum_{(j_1,...,j_{4k}) \in \mathcal{E}_{N-1,4k}} \Bigg( \sum_{(j_{2k+1},...,j_{4k}) \in \mathcal{E}_{N-1,4k}} 1 \Bigg) \int \bar{f_N} |\psi(x_1, x_{j_1})|^2...|\psi(x_1, x_{j_{2k}})|^2 \ud Z \\ \leq & \frac{1}{(2k)!} \frac{1}{N^{2k}}(2C^2\Lambda)^{2k} \sum_{l=1}^{2k} \sum_{J_{4k} \in \mathcal{E}_{N-1,4k}, |S(J_{2k})| = l} \Bigg( \sum_{(j_{2k+1},...,j_{4k}) \in \mathcal{E}_{N-1,4k}} 1 \Bigg) a_1^{a_1}...a_N^{a_N},
\end{split}
\end{equation*}
and 
\begin{equation}\label{IEq:last 2k}
\Bigg( \sum_{(j_{2k+1},...,j_{4k}) \in \mathcal{E}_{N-1,4k}} 1 \Bigg) \leq \sum_{a'_1 + ... + a'_l = 2k, a'_1 \geq 1,...,a'_l \geq 1} \frac{(2k)!}{a'_1!...a'_l!} \leq l^{2k},
\end{equation}
we denote
\begin{equation}
\sum_{l=1}^{2k} \sum_{J_{4k} \in \mathcal{E}_{N-1,4k}, |S(J_{2k})| = l} l^{2k} a_1^{a_1}...a_N^{a_N} = \sum_{l=1}^{2k} l^{2k} C_N^l U^l_{N,2k},
\end{equation}
where
\begin{equation}
U_{N,2k}^l := \sum_{a_1 +... + a_l = 2k, a_1 \geq 1,...,a_l \geq 1} \frac{(2k)!}{a_1!...a_l!} a_1^{a_1}...a_l^{a_l},
\end{equation}
using (2.13) and Lemma 6 of \cite{JFAJW}, we have
\begin{equation}\label{IEq:U-2k-l}
U_{N,2k}^l \leq e^{2k} (2k)! \Bigg( \sum_{a_1 +... + a_l = 2k, a_1 \geq 1,...,a_l \geq 1} 1 \Bigg) \leq \frac{1}{\sqrt{k}} (2e)^{2k}(2k)!,
\end{equation}
Insert (\ref{IEq:last 2k}) and (\ref{IEq:U-2k-l}) into (\ref{IEq:CS-4k}), we obtain that
\begin{equation}
\begin{split}
& \frac{1}{(2k)!} \frac{1}{N^{2k}} \sum_{j_1,...,j_{4k} = 1}^N \int \bar{f_N} \psi(m_1,n_{j_1})... \psi(m_1,n_{j_{4k}}) \ud Z \\  \leq &  \frac{(4eC^2\Lambda)^{2k}}{(2k)!} \frac{2}{N^{2k}}  (2k)!
\frac{1}{\sqrt{k}} \sum_{l=1}^{2k} C_N^l l^{2k} \\ = & \frac{2}{\sqrt{k}} (8e^2C^2\Lambda)^{2k} \sum_{l=1}^{2k} C_N^l l^{2k} N^{-2k}, 
\end{split}
\end{equation}
furthermore, by Stirling's formula
\begin{equation}
C_N^l \frac{l^{2k}}{N^{2k}} = \frac{l^{2k-l}}{N^{2k-l}} \frac{l^l}{N^l}\frac{N!}{(N-l)! l!} \leq \frac{1}{\sqrt{\pi l}}\sqrt{\frac{N}{N-l}} \left(\frac{N}{N-l}\right)^{N-l},
\end{equation}
and by assumption $l < 3k \leq N$ gives that $\frac{N}{N-l} \leq \frac{3}{2}$, thus
\begin{equation}\label{IEq:propostion4-case1}
\frac{1}{(2k)!} \frac{1}{N^{2k}} \sum_{j_1,...,j_{4k} = 1}^N \int \bar{f_N} \psi(m_1,n_{j_1})... \psi(m_1,n_{j_{4k}}) \ud Z \leq 4k (8e^2C^2\Lambda)^{2k},
\end{equation}
we finish the proof of this case. \\
 
$Case: 3k > N.$ By Cauchy inequality, we have
\begin{equation}
\begin{split}
\frac{1}{N} \sum_{j_1,j_2 = 1}^N \psi(m_1, n_{j_1}) \psi(m_1, n_{j_2}) \leq \frac{C^2}{N} \sum_{j_1, j_2}^N (1 + |x_{j_1}|)(1 + |x_{j_2}|) \leq C^2 \sum_{j=1}^N (1 + |x_j|)^2,
\end{split}
\end{equation}
then we have
\begin{equation}\label{IEq:propostion4-case2}
\begin{aligned}
& \frac{1}{(2k)!} \frac{1}{N^{2k}} \sum_{j_1,...,j_{4k} = 1}^N \int \bar{f_N} \psi(m_1,n_{j_1})... \psi(m_1,n_{j_{4k}}) \ud Z \\ \leq & \frac{C^{4k}}{(2k)!} \int \bar{f_N} \bigg( \sum_{j=1}^N (1 + |x_j|)^2 \bigg)^{2k} \ud Z \leq (5e^2C^2 \Lambda)^{2k}, 
\end{aligned}
\end{equation}
which follows the same argument of Proposition 4 of \cite{JFAJW}.

Finally, we combine \eqref{IEq:propostion4-case1} and \eqref{IEq:propostion4-case2}, then we complete the proof.
\end{proof}

Now let us take $\psi = \nu \psi_2$ or $\nu \psi_3$, we obtain the desired  bound $O(1)$ of ($\ref{Quantity:psi-12}$) and ($\ref{Quantity:psi-3}$) by above proposition.

\section{Proof of the main results}
In this section, we prove three main theorems we claimed in Section \ref{Main results and examples}. We roughly divide all three proofs into two steps. 

$\bullet$ Step one is show the uniform-in-$N$ estimates for the time evolution of our new quantity $\mathcal{E}_N(f_N|\bar{f}_N)$. The key point is to select suitable weight matrix function $M_t$ to estimate the lower bound of new matrix $S_t$ on the right hand side of (\ref{IEq:evlov-Fisher}), and the lower bound of matrix $S_t$ should be independent on particle number $N$. In the case of uniform-in-time propagation of chaos, we also need to deal with the second term on the right hand side of \eqref{IEq:evlov-Fisher},
\begin{equation*}
-2 \int_{(\Omega \times \R^d)^N} f_N \z \nabla \overline{R}_N, M \nabla \log \frac{f_N}{f^{\otimes N}}\y \ud Z,
\end{equation*}
this is exactly what we do in Lemma \ref{Lemma:CS-error}.

$\bullet$ Step two is to establish the Gronwall type inequality for time evolution of our new quantity $\mathcal{E}_N(f_N|\bar{f}_N)$ by some functional inequalities developed in Section \ref{section:WLSI} and large deviation estimates proved in Section \ref{section:Large deviation estimates}. In the case of uniform-in-time propagation of chaos, we require Log-Sobolev inequality and weighted Log-Sobolev inequality 
for non-linear equilibrium $f_{\infty}$. However, we require that the $N$ particle invariant measure $f_{N,\infty}$ satisfies uniform-in-$N$ Log-Sobolev inequality for the long time convergence of Eq.\eqref{Eq:Liouville equation}, which is more harder than the case of uniform in time propagation of chaos which takes the reference measure as the tensor form $f_{\infty}^{\otimes N}$. We refer \cite{UPIandULSI} for the study of uniform-in-$N$ functional inequalities.

\subsection{Proof of Theorem \ref{Thm:theorem1.3}}
\begin{proof}

$Step1.$ Let us take $\bar{f}_N = f_{N,\infty}$ and $\overline{R}_N = 0$. We observe that $\nabla_X \nabla_V \log f_{N,\infty} = \nabla_V \nabla_X \log f_{N,\infty} = 0$ and $\nabla_V \nabla_V \log f_{N,\infty} = - \frac{1}{2}\sigma^{-1} \gamma Id_{Nd \times Id}$, then the matrix $S$ in (\ref{IEq:evlov-Fisher}) of Corollary \ref{Cor:block matrix} reads as
\begin{equation*}
S = 
\left(
\begin{array}{cc}
2F - \partial_t E - L^{\ast}_NE \ \ & 4\gamma F - \partial_t F - L^{\ast}_NF + 2E \nabla^2 U \\
2G - \partial_t F - L^{\ast}_NF \ \ & 4\gamma G - \partial_t G - L^{\ast}_N G + 2F \nabla^2 U
\end{array}
\right).
\end{equation*}
Now we select the weight matrix $M$ as a constant matrix, 
\begin{equation}\label{M:case-1.1}
E = \text{diag}\{\delta a^3,...,\delta a^3\}, \ \ \ F = \text{diag}\{\delta a^2,...,\delta a^2\}, \ \ \ G = \text{diag}\{2\delta a,...,2\delta a\},
\end{equation}
where $E,F,G$ are $Nd \times Nd$ constant diagonal matrices
and $a > 0, \delta > 0$ are to be confirmed, then the matrix $S_t$ reduces to
\begin{equation*}
S = 
\left(
\begin{array}{cc}
2F \ \ & 4\gamma F + 2E \nabla^2 U \\
2G \ \ & 4\gamma 
G + 2F \nabla^2 U
\end{array}
\right).
\end{equation*}
Observe that the top left corner matrix $F$ implies the entropy dissipation in position direction, and right down corner concerns the entropy dissipation in velocity direction. If $\nabla^2 U$ has a positive lower bound, i.e.$\nabla^2 U \geq \kappa Id_{dN \times dN}$ for some $\kappa > 0$, then we have $4 \gamma G + 2F \nabla^2 U \geq (8 \delta a \gamma + 2\delta a^2 \kappa) Id_{dN \times dN}$. However, our assumptions also treat $\nabla^2 U$ doesn't have positive lower bound. Since we assume that $\|\nabla^2 V \|_{L^{\infty}} < \infty$ and $\|\nabla^2 W \|_{L^{\infty}} < \infty$, we have
$\|\nabla^2 U \|_{L^{\infty}} \leq C_K + C_V < \infty$. Now for right down corner, we choose $a$ such that $a\|\nabla^2 U\|_{L^{\infty}} < 2 \gamma$, then we have
\begin{equation}\label{Restriction:a-1.1}
a \leq \frac{2\gamma}{C_K + C_V},
\end{equation}
and
\begin{equation}\label{VV-theorem-1.1}
\langle \nabla_V u, (4\gamma G + 2F \nabla^2 U) \nabla_Vu \rangle \geq 4 \delta a \gamma|\nabla_V u|^2.
\end{equation}
For top right corner and left down corner, we deal with cross terms.
It is easy to obtain that
\begin{equation}\label{VX-theorem-1.1}
\begin{aligned}
& \langle \nabla_Xu, [4\gamma F + 2E \nabla^2 U + 2G] \nabla_Vu \rangle \\ \leq & \delta a[4 + 4a\gamma + 2 a^2(C_K + C_V)]|\nabla_V u||\nabla_X u| \\ \leq & \delta a(4 + 8a\gamma) |\nabla_V u||\nabla_X u| \\ \leq & \frac{1}{2}\delta a^2 |\nabla_X u|^2 + \frac{1}{2}\delta(4 + 8a\gamma)^2 |\nabla_V u|^2
\end{aligned}
\end{equation}
by \eqref{Restriction:a-1.1} in the third line and Cauchy inequality in the fourth line. Combining \eqref{VV-theorem-1.1} and \eqref{VX-theorem-1.1}, we have
\begin{equation}
\begin{aligned}
- \int f_N \z \nabla u, S\nabla u \y \leq & - \frac{3}{2}\delta a^2 \int f_N |\nabla_X u|^2 - 4\delta a\gamma \int f_N |\nabla_V u|^2 \\ & + m_1 \int f_N |\nabla_V u|^2,
\end{aligned}
\end{equation}
provided $m_1 = \frac{1}{2}\delta(4 + 8a \gamma)^2 > 0$. Now we choose $\delta$ such that $m_1 \leq \frac{\sigma}{4}$, then we have
\begin{equation}
\delta \leq \frac{\sigma}{2(4 + 8a\gamma)^2},
\end{equation}
and
\begin{equation}
\frac{d}{dt} \mathcal{E}^M_N(t) \leq - \frac{c_1}{N} \int f_N |\nabla_X u|^2 - \frac{c_2}{N} \int f_N |\nabla_V u|^2, 
\end{equation}
provided $c_1 = \frac{3}{2} \delta a^2$ and $c_2 = 4\delta a \gamma + \frac{\sigma}{2}$. Finally we obtain the entropy dissipation in $X$ and $V$ direction with constants which are independent of $N$.

$Step2.$ We directly use the results in \cite{UPIandULSI} in this step, under Assumption \ref{Assumption:V1} and \ref{Assumption:W1} with $C_K < 1$, we have uniform-in-$N$ Log Sobolev inequality of $N$ particle invariant measure $f_{N, \infty}$,
\begin{equation*}
Ent_{\mu_N} (f^2) \leq 
\rho_{LS} \int_{\R^{dN}}
|\nabla f|^2 d\mu_N, \ \ f \in C^1_b(\R^{dN}),
\end{equation*}
by our notation, let $\mu_N = f_{N,\infty}$, we have
\begin{equation}
\frac{d}{dt} \mathcal{E}^M_N(t) \leq -c \mathcal{E}^M_N(t),
\end{equation}
where $c = \frac{1}{2(1+\rho_{LS})}\min\{ \frac{3}{2} \delta a^2, \frac{\sigma}{2}\}$ is a constant depending on $\rho_{LS}, U, \sigma, \gamma$, then we finish the proof.
\end{proof}

\subsection{Proof of Theorem \ref{Thm:theorem1.4}}
In this subsection, we prove the particle system \eqref{Eq:particle system} approximates the unique equilibrium \eqref{Eq:nonlinear equilibrium} of Vlasov-Fokker-Planck equation \eqref{Eq:limiting equation} when diffusion strength $\sigma$ is large enough. We deal with two kinds of confining potentials by selecting different weight matrix $M$. When confining potentials $V$ satisfy Assumption \ref{Assumption:V1}, we choose
\begin{equation}\label{M:case1-1.2}
E = \text{diag}\{\delta a^3,...,\delta a^3\}, \ \ \ F = \text{diag}\{\delta a^2,...,\delta a^2\}, \ \ \ G = \text{diag}\{2\delta a,...,2\delta a\},
\end{equation}
where $E,F,G$ are $Nd \times Nd$ constant diagonal matrices
and $\delta, a > 0$ are to be confirmed. When confining potentials $V$ satisfy Assumption \ref{Assumption:V2}, we choose
\begin{equation*}\label{M:case2-1.2}
\left \{
\begin{aligned}
& E = \text{diag}\{e(z_1)Id_{d \times d},...,e(z_N)Id_{d \times d}\}, \\ & F = \text{diag}\{f(z_1)Id_{d \times d},...,f(z_N)Id_{d \times d}\}, \\ & G = \text{diag}\{g(z_1)Id_{d \times d},...,g(z_N)Id_{d \times d}\}, 
\end{aligned}
\right.
\end{equation*}

where $E,F,G$ are also $Nd \times Nd$ diagonal matrices, and $e(z), f(z), g(z)$ are $d \times d$ diagonal matrices which should be understood as $e(z)Id_{d\times d}$, $f(z)Id_{d\times d}$ and $g(z)Id_{d\times d}$, we omit the symbol ``$Id_{d \times d}$" for convenience. We choose $e(z), f(z)$ and $g(z)$ as
\begin{equation*}
\begin{split}
e(z) = \delta a^3(H(z))^{-3\theta},\ \ 
b(z) = \delta a^2(H(z))^{-2\theta},\ \  c(z) = 2 \delta a(H(z))^{-\theta},
\end{split}
\end{equation*}
where 
\begin{equation*}
H(z) = \frac{v^2}{2} + V(x) + H_0, \ \ \ H_0 > 0,
\end{equation*}
and $\delta, a, \theta, H_0 > 0$ are to be confirmed. In the next, we establish Gronwall type inequality for the quantity $\mathcal{E}_N^M(f^t_N|f^{\otimes N}_{\infty})$. \\

\begin{proof}
$Case 1.$ The confining potentials $V$ satisfy Assumption \ref{Assumption:V1} and we select $M$ as constant diagonal matrix defined by \eqref{M:case1-1.2}.

$Step 1.$ The matrix $M$ is exactly the same as we have used in the last subsection, we immediately have 
\begin{equation}\label{1.2-case1-dissipation}
\begin{aligned}
- \int f_N \z \nabla u, S \nabla u \y \leq & - \frac{3}{2}\delta a^2 \int f_N |\nabla_X u|^2 - 4\delta a\gamma \int f_N |\nabla_V u|^2 \\ & + m_1 \int f_N |\nabla_V u|^2,
\end{aligned}
\end{equation}
provided $m_1 = \frac{1}{2}\delta(4 + 8a \gamma)^2$ and $a \leq \frac{2\gamma}{C_K + C_V}$. By Lemma \ref{Lemma:CS-error}, we have
\begin{equation}\label{1.2-case1-error}
\begin{aligned}
\int f_N \z \nabla \overline{R}_N, M \nabla u\y \leq & n_1 \int f_N |\nabla_X u|^2 + n_2 \int f_N |\nabla_V u|^2 \\ & + \frac{8\delta a^3}{\sigma} \int f_N |R_N^1|^2 + \frac{\delta a^2 \gamma^2(1+a)}{2\sigma^2} \int f_N |R_N^3|^2 \\ & + \big( \frac{2 \gamma^2 \delta a^3}{\sigma^2} + \frac{\gamma^2 \delta^2 a^2}{\sigma^3}\big) \int f_N |R_N^2|^2 \\ & + \sum_{i=1}^N \frac{\sigma}{16} \int f_N | \sqrt{E^{ii}} \nabla_{v_i} \nabla_{x_i} u|_{\R^{2d}}^2 + \frac{\sigma}{16} \int f_N | \sqrt{F^{ii}} \nabla_{v_i} \nabla_{v_i} u|_{\R^{2d}}^2,
\end{aligned}
\end{equation}
provided $n_1 = \frac{1}{4}\delta a^2 + (C_K + \frac{1}{2})\delta a^3$ and $n_2 = \frac{\sigma}{4} + \delta a^2(C_K a + C_K + \frac{1}{2})$. By Lemma \ref{lemma:error term of RE} and Young inequality, we have
\begin{equation}\label{1.2-case1-entropy}
\frac{d}{dt} H_{N}(f^t_N|f^{\otimes N}_{\infty}) \leq - \frac{3 \sigma}{4} \frac{1}{N} \int f_N|\nabla_V u|^2 + \frac{1}{\sigma} \frac{1}{N} \int f_N|R^0_N|^2.
\end{equation}
Combining $\eqref{1.2-case1-dissipation}$, $\eqref{1.2-case1-error}$ and $\eqref{1.2-case1-entropy}$, we obtain 
\begin{equation}\label{1.2-case1-step1-c'}
\begin{aligned}
\frac{d}{dt}  \mathcal{E}_N^M(f_N|f_{\infty}^{\otimes N}) \leq & - \frac{c_1}{N} \int f_N |\nabla_X u|^2 - \frac{c_2}{N} \int f_N |\nabla_V u|^2 \\ & + \frac{8\delta a^3}{\sigma} \frac{1}{N} \int f_N |R_N^1|^2 + \frac{\delta a^2 \gamma^2(1+a)}{2\sigma^2} \frac{1}{N}\int f_N |R_N^3|^2 \\ & + \big( \frac{2 \gamma^2 \delta a^3}{\sigma^2} + \frac{\gamma^2 \delta^2 a^2}{\sigma^3}\big) \frac{1}{N} \int f_N |R_N^2|^2 + \frac{1}{\sigma} \frac{1}{N} \int f_N|R_N|^2,
\end{aligned}
\end{equation}
where $c_1 = -\frac{1}{2}\delta a^2 + (C_K + \frac{1}{2})\delta a^3$ and $c_2 = -\frac{\sigma}{2} - 4 \delta a \gamma + \delta a^2(C_K a + C_K + \frac{1}{2}) + m_1$. Recall we choose $a$ such that $a \leq \frac{2 \gamma}{C_K + C_V}$, now we update $a$ such that
\begin{equation}\label{1.2-case1-step1-a}
a \leq \min \Big \{ \frac{2 \gamma}{C_K + C_V}, \frac{1}{4C_K + 2} \Big \},
\end{equation}
then we have $c_1 \leq -\frac{1}{4}\delta a^2$ and $\delta a^2(C_K a + C_K + \frac{1}{2}) \leq \delta a$. In the next we choose $\delta$ such that 
\begin{equation}\label{1.2-case1-step1-delta}
\delta \leq \frac{\sigma}{4[8 + a + 28a\gamma + 32a^2 \gamma^2]},
\end{equation}
we have $c_2 \leq - \frac{\sigma}{4}$. Finally, we update \eqref{1.2-case1-step1-c'} as
\begin{equation}\label{1.2-case1-step1-final}
\begin{aligned}
\frac{d}{dt}  \mathcal{E}_N^M(f_N|f_{\infty}^{\otimes N}) \leq & - \frac{\delta a^2}{4} \frac{1}{N} \int f_N |\nabla_X u|^2 - \frac{\sigma}{4} \frac{1}{N} \int f_N |\nabla_V u|^2 \\ & + \frac{8\delta a^3}{\sigma} \frac{1}{N} \int f_N |R_N^1|^2 + \frac{\delta a^2 \gamma^2(1+a)}{2\sigma^2} \frac{1}{N}\int f_N |R_N^3|^2 \\ & + \big( \frac{2 \gamma^2 \delta a^3}{\sigma^2} + \frac{\gamma^2 \delta^2 a^2}{\sigma^3}\big) \frac{1}{N} \int f_N |R_N^2|^2 + \frac{1}{\sigma} \frac{1}{N} \int f_N|R_N|^2.
\end{aligned}
\end{equation}

$Step2.$ We start from \eqref{1.2-case1-step1-final}. By Proposition \ref{proposition:LDE1}, we have
\begin{equation}\label{1.2-case1-step1-LDE1}
\frac{1}{N}\int f_N|R^1_N|^2 \leq \frac{1}{\nu}H_N(f^t_N|f^{\otimes N}_{\infty}) + \frac{C}{N}, 
\end{equation}
and by Proposition \ref{proposition:LDE2}, we have
\begin{equation}\label{1.2-case1-step1-LDE2}
\frac{1}{N}\int f_N|R^i_N|^2 \leq \frac{1}{\nu}H_N(f^t_N|f^{\otimes N}_{\infty}) + \frac{C}{N}, 
\end{equation}
where $i = 0,2,3$, the constant $\nu > 0$ satisfies $(16C^2_Ke^2 \Lambda) \nu < 1$ and the constant $C > 0$ depends on $\nu, \sigma, \gamma, K$ and $V$. Using \eqref{1.2-case1-step1-LDE1} and \eqref{1.2-case1-step1-LDE2}, we have
\begin{equation}\label{1.2-case1-step1-last two}
\begin{aligned}
\frac{d}{dt}  \mathcal{E}_N^M(f_N|f_{\infty}^{\otimes N}) \leq & - \frac{\delta a^2}{4} \frac{1}{N} \int f_N |\nabla_X u|^2 - \frac{\sigma}{4} \frac{1}{N} \int f_N |\nabla_V u|^2 \\ & + \frac{n_3}{\nu} H_N(f_N|f_{\infty}^{\otimes N}) + \frac{C}{N},   
\end{aligned}
\end{equation}
where
\begin{equation*}
n_3 = \Big[\frac{8\delta a^3}{\sigma} + \frac{\delta a^2 \gamma^2(1+a)}{2\sigma^2} + \frac{2 \gamma^2 \delta a^3}{\sigma^2} + \frac{\gamma^2 \delta^2 a^2}{\sigma^3} + \frac{1}{\sigma} \Big].
\end{equation*}
Now let us determine the value of $a > 0$, $\delta > 0$ and $\nu > 0$ such that
\begin{equation*}
\frac{n_3 \rho_{ls}}{\nu} \leq \min \Big\{\frac{\delta a^2}{8}, \frac{\sigma}{8} \Big\},
\end{equation*}
where $\rho_{ls}$ is the Log-Sobolev inequality constant of $f_{\infty}$ (See Corollary \ref{Cor:LSI-f_infty}).
Recall the constant $a > 0$ satisfies
\begin{equation*}
a \leq \min \Big \{ \frac{2 \gamma}{C_K + C_V}, \frac{1}{4C_K + 2} \Big \} < 1,
\end{equation*}
now we choose
\begin{equation}\label{1.2-case1-step1-delta-final}
\begin{aligned}
\delta = \frac{\sigma}{4[10 + 28 \gamma + 32 \gamma^2]} \leq \frac{\sigma}{4[8 + a + 28a\gamma + 32a^2 \gamma^2]}
\end{aligned}
\end{equation}
by \eqref{1.2-case1-step1-delta}. We also choose $\frac{1}{\nu} = 20C_K^2 e \frac{\sigma}{\gamma}$ with $\Lambda \leq \frac{1}{e \lambda}$ satisfies $\lambda \in (0, \beta)$ in Proposition \ref{proposition:LDE1} and \ref{proposition:LDE2}. Observing that $\delta a^2 < \sigma$ by $a < 1$ and $\delta < \sigma$, we only need
\begin{equation*}
\frac{n_3 \rho_{ls}}{\nu} \leq \frac{\delta a^2}{8}.
\end{equation*}
Now we choose $a$ and $\sigma$ to make every term of $n_3$ less than $\frac{\delta a^2 \nu}{40 \rho_{ls}}$,
\begin{equation}\label{1.2-case1-step2-ieq}
\left\{
\begin{aligned}
& \frac{8\delta a^3}{\sigma} \leq \frac{\delta a^2 \nu}{40 \rho_{ls}} \Longrightarrow a \leq \frac{\gamma}{6400 e\rho_{ls} C_K^2}, \\ & \frac{\delta a^2 \gamma^2}{\sigma^2} \leq \frac{\delta a^2 \nu}{40 \rho_{ls}} \Longrightarrow \sigma \geq 3200\rho_{ls}e \gamma C_K^2, \\ & \frac{2 \gamma^2 \delta a^2}{\sigma^2} \leq \frac{\delta a^2 \nu}{40 \rho_{ls}} \Longrightarrow \sigma \geq 1600\rho_{ls}e \gamma C_K^2, \\ & \frac{\gamma^2 \delta a^2}{\sigma^2} \leq \frac{\delta a^2 \nu}{40 \rho_{ls}} \Longrightarrow \sigma \geq 3200\rho_{ls}e \gamma C_K^2, \\ & \frac{1}{\sigma} \leq \frac{\delta a^2 \nu}{40 \rho_{ls}} \Longrightarrow \sigma \geq \frac{160[10 + 28 \gamma + 32 \gamma^2]\rho_{ls}eC_K^2}{a^2 \gamma}.
\end{aligned}
\right.
\end{equation}
We choose $a$ such that
\begin{equation*}
a \leq \min \Big \{ \frac{2 \gamma}{C_K + C_V}, \frac{1}{4C_K + 2},  \frac{\gamma}{6400 e\rho_{ls} C_K^2} \Big \},
\end{equation*}
by the first line above and \eqref{1.2-case1-step1-a}, for example, saying  
\begin{equation*}
a = \frac{\min\{\gamma,1\}}{6400 e\rho_{ls} (C_K+1)^2 + 4C_V + 4C_K + 2},   \end{equation*}
then we have the range of diffusion constant in our result,
\begin{equation}\label{1.2-case1-step-sigma-final}
\sigma \geq \max \bigg\{ \frac{160[10 + 28 \gamma + 32 \gamma^2]\rho_{ls}e}{a^2 \gamma}, 3200\rho_{ls}e\gamma \bigg\}C_K^2.
\end{equation}
Finally, we update \eqref{1.2-case1-step1-last two} as the following
\begin{equation}
\begin{aligned}
\frac{d}{dt}  \mathcal{E}_N^M(f^t_N|f_{\infty}^{\otimes N}) \leq & - \frac{\delta a^2}{16} \frac{1}{N} \int f_N |\nabla_X u|^2 - \frac{\sigma}{16} \frac{1}{N} \int f_N |\nabla_V u|^2 \\ & - \frac{\delta a^2}{16 \rho_{ls}}H_N(f^t_N|f_{\infty}^{\otimes N}) + \frac{C}{N} \\ \leq & -\frac{\delta a^2}{16(\rho_{ls} + 1)}\mathcal{E}_N^M(f^t_N|f_{\infty}^{\otimes N}) + \frac{C}{N},   
\end{aligned}
\end{equation}
we finish the proof of this case. \\

$Case 2.$ The confining potentials $V$ satisfy Assumption \ref{Assumption:V2} and we select $M$ as diagonal mateix function defined by \eqref{M:case2-1.2}.  

$Step1.$ We take $E,F,G$ as the following tensorized matrices,
\begin{equation}\label{M:case2}
E = \text{diag}\{e(z_1),...,e(z_N)\}, \ \ F = \text{diag}\{f(z_1),...,f(z_N)\}, \ \ G = \text{diag}\{g(z_1),...,g(z_N)\}, 
\end{equation}
we take $e,f,g$ as scalar functions read as
\begin{equation}
\begin{split}
e(z) = \delta a^3 (H(z))^{-3\theta},\ \ 
f(z) = \delta a^2 (H(z))^{-2\theta},\ \ 
g(z) = 2 \delta a (H(z))^{-\theta},
\end{split}
\end{equation}
where 
\begin{equation}
\ \ H(z) = \frac{v^2}{2} + V(x) + H_0, \ \ \ H_0 > 0, 
\end{equation}
and $\delta, a, \theta, H_0 > 0$ are to be confirmed. We recommend \cite{EntropyMutiplier} to readers for similar argument of one particle version. Now let us estimate the lower bound of matrix $S$ in Corollary \ref{Cor:block matrix}. We recall 
\begin{equation*}
S = 
\left(
\begin{array}{cc}
2F - \partial_t E - L^{\ast}_NE \ \ & D_1 - \partial_t F - L^{\ast}_NF - 2E \nabla^2 U + 2 \gamma F\\
2G - \partial_t F - L^{\ast}_NF \ \ & D_2 - \partial_t G - L^{\ast}_N G - 2F \nabla^2 U + 2 \gamma G
\end{array}
\right),
\end{equation*}
after we take $\bar{f_N} = f^{\otimes N}_{\infty}$, we have 
\begin{equation}
S = 
\left(
\begin{array}{cc}
2F - L^{\ast}_NE \ \ & 4 \gamma F  - L^{\ast}_NF + 2E \nabla^2 U  \\
2G -  L^{\ast}_NF \ \ & 4 \gamma G - L^{\ast}_N G + 2F \nabla^2 U
\end{array}
\right).
\end{equation}
For the left top corner element, the diagonal matrix 
$F$ still offers the dissipation in position direction. Observing that $2F - L^{\ast}_N E$ is a $Nd \times Nd$ diagonal matrix, we could estimate them like scalar. Before that, we firstly estimate the upper bound of $L^{\ast}_N E, L^{\ast}_N F$ and $L^{\ast}_N G$. For some $s > 0$, we have
\begin{equation*}
\begin{aligned}
& L^{\ast}_N (H^{-s\theta}(z_i))
\\ = & \sum_{j=1}^N (v_j \cdot \nabla_{x_j} - \nabla_{x_j}U \cdot \nabla_{v_j} - \gamma v_j \cdot \nabla_{v_j} + \sigma \Delta_{x_j})H^{-s\theta}(z_i)
\\ = & -(s\theta) \bigg\{ \frac{\sigma d}{H} + \frac{\sigma (-s\theta -1)v_i^2}{H^2} - \frac{\gamma v_i^2}{H} - (\frac{1}{N} \sum_{j:j \neq i} v_i \cdot \nabla W(x_i - x_j))H^{-1} \bigg\}H^{-s
\theta}(z_i).
\end{aligned}
\end{equation*}
Recall we assume that $\|\nabla W\|_{L^{\infty}} < \infty$ in this case, we have
\begin{equation*}
|L^{\ast}_N (H^{-s\theta}(z_i))| \leq s\theta \big(\frac{\sigma d + \sigma(s\theta + 1)}{H_0} + \gamma + \| \nabla W\|_{L^{\infty}} \big) H^{-s \theta}(z_i),
\end{equation*}
now we choose $H_0$ large enough, which satisfies $\frac{\sigma(d + 3\theta + 1)}{H_0} \leq \gamma$, we have
\begin{equation}\label{1.2-case2-step1-LH}
|L^{\ast}_N (H(z_i)^{-s\theta})| \leq s\theta(2 \gamma + \| \nabla W\|_{L^{\infty}})H^{-s \theta}.
\end{equation}
For $i$-th component of $2F - L_N^{\ast}E$, we have  
\begin{equation*}
\begin{aligned}
2f(z_i)  - L_N^{\ast}(e(z_i)) = & 2 \delta a^2 H^{-2\theta}(z_i) - \delta a^3 L^{\ast}_N (H^{-3\theta}(z_i)) \\ \geq & 2 \delta a^2 H^{-2\theta} - 3\theta \delta a^3(2\gamma + \|\nabla W\|_{L^{\infty}}) H^{-3\theta}, 
\end{aligned}
\end{equation*}
again we choose $H_0$ such that $\frac{3\theta a (2\gamma + \|\nabla W\|_{L^{\infty}})}{H_0^{\theta}} \leq 1$, we have
\begin{equation}\label{1.2-case2-step1-XX}
\langle \nabla_Xu, (2F - L_N^{\ast}E)\nabla_X u \rangle \geq \delta a^2 \sum_{i=1}^N H^{-2\theta}(z_i
)|\nabla_{x_i}u|^2,
\end{equation}
provided $H_0$ satisfies
\begin{equation}\label{1.2-case2-step1-H0}
H_0 \geq \max \Big \{\frac{\sigma(d + 3 \theta + 1)}{\gamma}, [3\theta a (2\gamma + \|\nabla W\|_{L^{\infty}})]^{\frac{1}{\theta}} \Big \}.
\end{equation}
For the right down corner of $S$, we recall the formulation of $dN \times dN$ matrix $\nabla^2 U$ in $(\ref{Quantity:U})$, and we have $\|V^{-2\theta} \nabla^2 V\|_{L^{\infty}} \leq C_V^{\theta}$ by Assumption \ref{Assumption:V2}, then we obtain
\begin{equation}\label{1.2-case2-step1-V}
\|H^{-2\theta}\nabla^2 U\|_{L^{\infty}} \leq \|V^{-2\theta} \nabla^2 V\|_{L^{\infty}} + \frac{C_K}{H_0} \leq C_V^{\theta} + C_K < \infty,
\end{equation}
provided $H_0 > 1$ for convenience. Combining \eqref{1.2-case2-step1-LH} and \eqref{1.2-case2-step1-V}, we immediately have
\begin{equation*}
\begin{aligned}
& - \z \nabla_V u, (4 \gamma G - L^{\ast}_N G + 2F \nabla^2 U) \nabla_V u \y \\ \leq & \delta a[4\gamma + \theta(2\gamma + \| \nabla W\|_{L^{\infty}}) + 2a(C_V^{\theta} + C_K)]|\nabla_V u|^2,
\end{aligned}
\end{equation*}
we choose $a$ such that $a(C^{\theta}_V + C_K) \leq \gamma$, we have
\begin{equation}\label{1.2-case2-step1-VV}
\begin{aligned}
& - \z \nabla_V u, (4 \gamma G - L^{\ast}_N G + 2F \nabla^2 U)\nabla_V u \y \\ \leq & \delta a[6\gamma + \theta(2\gamma + \| \nabla W\|_{L^{\infty}})
]|\nabla_V u|^2.
\end{aligned}
\end{equation}
For cross terms, by \eqref{1.2-case2-step1-LH} and similar argument above, we have
\begin{equation}\label{1.2-case2-step1-VX}
\begin{aligned}
& \z \nabla_X u, (4 \gamma F + 2G - 2 L^{\ast}_NF + 2E \nabla^2 U) \nabla_V u \y \\  \leq & [4 \gamma F + 2G + 2\delta a^2|L^{\ast}_N(H^{-2\theta}(z_i))| + 2 \delta a^3(C_V^{\theta} + C_K)H^{-\theta}]|\nabla_Vu||\nabla_Xu| \\ \leq & \delta a H^{-\theta}[4 + 4\gamma a + 4a \theta(2\gamma + \|\nabla W\|_{L^{\infty}})H^{-\theta} + 2a^2(C_V^{\theta} + C_K)]|\nabla_Vu||\nabla_Xu| \\ \leq & \delta a H^{-\theta}[4 + 6\gamma a + 4a\theta(2\gamma + \|\nabla W\|_{L^{\infty}})]|\nabla_Vu||\nabla_Xu| \\ \leq & \frac{1}{4}\delta a^2 H^{-2\theta}|\nabla_Xu|^2 + \delta [4 + 6\gamma a + 4a\theta(2\gamma + \|\nabla W\|_{L^{\infty}})]^2|\nabla_Vu|^2.
\end{aligned}
\end{equation}
Now we collect \eqref{1.2-case2-step1-VV}, \eqref{1.2-case2-step1-VX} and \eqref{1.2-case2-step1-XX}, we have
\begin{equation}\label{1.2-case2-step1-dissipation}
\begin{split}
- \int f_N\z \nabla u, S \nabla u\y \leq - \frac{3}{4} \delta a^2 \sum_{i=1}^N \int f_N H^{-2\theta}(z_i)|\nabla_{x_i}u|^2 + m_2 \int f_N |\nabla_Vu|^2,  
\end{split}
\end{equation}
where $m_2 = \delta [4 + 6\gamma a + 4a\theta(2\gamma + \|\nabla W\|_{L^{\infty}})]^2 + \delta a[6\gamma + \theta(2\gamma + \| \nabla W\|_{L^{\infty}})]$. Recall we choose $H_0 > 1$ for convenience, and again by Lemma \ref{Lemma:CS-error},
\begin{equation}\label{1.2-case2-error}
\begin{aligned}
\int f_N \z \nabla \overline{R}_N, M \nabla u\y \leq & n'_1 \sum_{i=1}^N \int f_N H^{-2\theta}(z_i)|\nabla_{x_i} u|^2 + n'_2 \int f_N |\nabla_V u|^2 \\ & + \frac{8\delta a^3}{\sigma} \int f_N |R_N^1|^2 + \frac{\delta a^2 \gamma^2(1+a)}{2\sigma^2} \int f_N |R_N^3|^2 \\ & + \big( \frac{2 \gamma^2 \delta a^3}{\sigma^2} + \frac{\gamma^2 \delta^2 a^2}{\sigma^3}\big) \int f_N |R_N^2|^2 \\ & + \sum_{i=1}^N \frac{\sigma}{16} \int f_N | \sqrt{E^{ii}} \nabla_{v_i} \nabla_{x_i} u|_{\R^{2d}}^2 + \frac{\sigma}{16} \int f_N | (F^{ii})^{\frac{1}{4}} \nabla_{v_i} \nabla_{v_i} u|_{\R^{2d}}^2 \\ & + \sum_{i=1}^N \int f_N \langle R^1_{N,i}, (\nabla_{v_i} E^{ii}) \nabla_{x_i} u \rangle_{\R^{2d}} + \int f_N \langle R^1_{N,i}, (\nabla_{v_i} F^{ii}) \nabla_{v_i} u \rangle_{\R^{2d}}.
\end{aligned}
\end{equation}
provided $n'_1 = \frac{1}{4}\delta a^2 + (C_K + \frac{1}{2})\delta a^3$ and $n'_2 = \frac{\sigma}{4} + \delta a^2(C_K a + C_K + \frac{1}{2})$. 
We only need to estimate the last two term. By the computation \eqref{1.2-case2-step1-LH}, we have
\begin{equation}\label{1.2-case2-step1-error-1}
\begin{aligned}
& \int f_N \langle R^1_{N,i}, (\nabla_{v_i} E^{ii}) \nabla_{x_i} u \rangle_{\R^{2d}} \\ \leq & \frac{3 \theta \delta a^3}{2H^{3\theta}_0} \int f_N |R_{N,i}^1|^2 + \frac{3 \theta \delta a^3}{2} \int f_N H^{-2\theta}(z_i)|\nabla_{x_i} u|^2,
\end{aligned}
\end{equation}
and 
\begin{equation}\label{1.2-case2-step1-error-2}
\begin{aligned}
& \int f_N \langle R^1_{N,i}, (\nabla_{v_i} F^{ii}) \nabla_{v_i} u \rangle_{\R^{2d}} \\ \leq & \frac{8\theta \delta^2 a^4}{H_0^{4\theta} \sigma} \int f_N |R_{N,i}^1|^2 + \frac{\sigma}{16} \int f_N |\nabla_{v_i}u|^2.
\end{aligned}
\end{equation}
Now we insert \eqref{1.2-case2-step1-error-1} and \eqref{1.2-case2-step1-error-2} into \eqref{1.2-case2-error}, and combine with \eqref{1.2-case2-step1-dissipation}, we obtain
\begin{equation}\label{1.2-case2-error-final}
\begin{aligned}
\int f_N \z \nabla u, M \nabla u\y \leq & n_1 \sum_{i=1}^N \int f_N H^{-2\theta}(z_i)|\nabla_{x_i} u|^2 + n_2 \int f_N |\nabla_V u|^2 \\ & + (\frac{8\delta a^3}{\sigma} + \frac{8\theta \delta^2 a^4}{H_0^{4\theta} \sigma} + \frac{3 \theta \delta a^3}{2H^{3\theta}_0}) \int f_N |R_N^1|^2 \\ & + \big( \frac{2 \gamma^2 \delta a^3}{\sigma^2} + \frac{\gamma^2 \delta^2 a^2}{\sigma^3}\big) \int f_N |R_N^2|^2 \\ & + \frac{\delta a^2 \gamma^2(1+a)}{2\sigma^2} \int f_N |R_N^3|^2
\end{aligned}
\end{equation}
provided $n_1 = - \frac{1}{2} \delta a^2 + (C_K + \frac{1}{2} + \frac{3\theta}{2}) \delta a^3$ and $n_2 = \frac{5\sigma}{16} + \delta a^2(C_K a + C_K + \frac{1}{2}) + m_2$. By Lemma \ref{lemma:error term of RE} and Young inequality, we have
\begin{equation}\label{1.2-case2-entropy}
\frac{d}{dt} H_{N}(f^t_N|f^{\otimes N}_{\infty}) \leq - \frac{3 \sigma}{4} \frac{1}{N} \int f_N|\nabla_V u|^2 + \frac{1}{\sigma} \frac{1}{N} \int f_N|R^0_N|^2.
\end{equation}
Finally, we combine \eqref{1.2-case2-error-final} and \eqref{1.2-case2-entropy}, and update the constants $a$ and $\delta$ as
\begin{equation}\label{1.2-case2-step1-a}
a \leq \min \Big \{ \frac{1}{4C_K + 6\theta + 2}, \frac{\gamma}{C_V^{\theta} + C_K} \Big \} 
\end{equation}
and 
\begin{equation}\label{1.2-case2-step1-delta}
\delta \leq \frac{3 \sigma}{8 + 32C_K + m_2'},
\end{equation}
where $m_2' = [4 + 6\gamma a + 4a\theta(2\gamma + \|\nabla W\|_{L^{\infty}})]^2 + a[6\gamma + \theta(2\gamma + \| \nabla W\|_{L^{\infty}})]$ comes from \eqref{1.2-case2-step1-dissipation}, we obtain,
\begin{equation}\label{1.2-case2-step1-final}
\begin{aligned}
\frac{d}{dt} \mathcal{E}_N^M(f_N^t|f_{\infty}^{\otimes N}) \leq & - \frac{\delta a^2}{4}\frac{1}{N} \sum_{i=1}^N \int f_N H^{-2\theta}(z_i)|\nabla_{x_i} u|^2 - \frac{\sigma}{4}\frac{1}{N} \int f_N |\nabla_V u|^2 \\ & + (\frac{8\delta a^3}{\sigma} + \frac{8\theta \delta^2 a^4}{H_0^{4\theta} \sigma} + \frac{3 \theta \delta a^3}{2H^{3\theta}_0}) \frac{1}{N} \int f_N |R_N^1|^2 \\ & + \big( \frac{2 \gamma^2 \delta a^3}{\sigma^2} + \frac{\gamma^2 \delta^2 a^2}{\sigma^3}\big) \frac{1}{N} \int f_N |R_N^2|^2 \\ & + \frac{\delta a^2 \gamma^2(1+a)}{2\sigma^2} \frac{1}{N} \int f_N |R_N^3|^2 + \frac{1}{\sigma} \frac{1}{N} \int f_N |R_N^0|^2.
\end{aligned}
\end{equation}

$Step2$. We start from \eqref{1.2-case2-step1-final}. By similar argument of the last case and using Proposition \ref{proposition:LDE1} and \ref{proposition:LDE2},
\begin{equation*}
\frac{1}{N}\int f_N|R^i_N|^2 \leq \frac{1}{\nu}H_N(f^t_N|f^{\otimes N}_{\infty}) + \frac{C}{N}, 
\end{equation*}
where $i = 0,1,2,3$, we have
\begin{equation}\label{1.2-case2-step1-last two}
\begin{aligned}
\frac{d}{dt}  \mathcal{E}_N^M(f_N|f_{\infty}^{\otimes N}) \leq & - \frac{\delta a^2}{4} \frac{1}{N} \int f_N |\nabla_X u|^2 - \frac{\sigma}{4} \frac{1}{N} \int f_N |\nabla_V u|^2 \\ & + \frac{n_4}{\nu} H_N(f_N|f_{\infty}^{\otimes N}) + \frac{C}{N},   
\end{aligned}
\end{equation}
the constant $\nu > 0$ satisfies $(16C^2_Ke^2 \Lambda) \nu < 1$, the constant $C > 0$ depends on $\nu, \sigma, \gamma, K$ and $V$, and the constant $\nu$ collects all coefficients of error terms which reads as
\begin{equation*}
n_4 = \Big[(\frac{8\delta a^3}{\sigma} + \frac{8\theta \delta^2 a^4}{H_0^{4\theta} \sigma} + \frac{3 \theta \delta a^3}{2H^{3\theta}_0}) + \frac{\delta a^2 \gamma^2(1+a)}{2\sigma^2} + \frac{2 \gamma^2 \delta a^3}{\sigma^2} + \frac{\gamma^2 \delta^2 a^2}{\sigma^3} + \frac{1}{\sigma} \Big].
\end{equation*}
Now let us determine the value of $\delta, a, H_0, \nu > 0$ such that
\begin{equation*}
\frac{n_4 \rho_{wls}}{\nu} \leq \min \Big\{\frac{\delta a^2}{8}, \frac{\sigma}{8} \Big\},
\end{equation*}
where $\rho_{wls}$ is the constant of weighted Log-Sobolev inequality constant of $f_{\infty}$ (See Corollary \ref{Cor:LSI-f_infty}). Recall the constant $a > 0$ satisfies
\begin{equation*}
a \leq \min \Big \{ \frac{1}{4C_K + 6\theta + 2}, \frac{\gamma}{C_K + C^{\theta}_V} \Big \} < 1
\end{equation*}
by \eqref{1.2-case2-step1-a}. Now we choose
\begin{equation}\label{1.2-case2-step1-delta-final}
\begin{aligned}
\delta = \frac{\sigma}{8 + 32C_K + m_2''} \leq \frac{3 \sigma}{8 + 32C_K + m_2'}
\end{aligned}
\end{equation}
by \eqref{1.2-case2-step1-delta}, where $m_2'' = [4 + 6\gamma + 4\theta(2\gamma + \|\nabla W\|_{L^{\infty}})]^2 + [6\gamma + \theta(2\gamma + \| \nabla W\|_{L^{\infty}})]$. We also choose $\frac{1}{\nu} = 20C_K^2 e \frac{\sigma}{\gamma}$ with $\Lambda \leq \frac{1}{e \lambda}$ satisfies $\lambda \in (0, \beta)$ in Proposition \ref{proposition:LDE1} and \ref{proposition:LDE2}. Observing that $\delta a^2 < \sigma$ by $a < 1$ and $\delta < \sigma$, we only need
\begin{equation*}
\frac{n_4 \rho_{ls}}{\nu} \leq \frac{\delta a^2}{8}.
\end{equation*}
Recall the lower bound of $H_0$ in $Step1$,
\begin{equation*}
H_0 \geq \max \Big \{\frac{\sigma(d + 3 \theta + 1)}{\gamma}, [3\theta a (2\gamma + \|\nabla W\|_{L^{\infty}})]^{\frac{1}{\theta}}, 1\Big \},
\end{equation*}
by \eqref{1.2-case2-step1-H0} and \eqref{1.2-case2-step1-V}, now we choose $H_0$ such that
\begin{equation}
H_0 \geq \max \{(8\theta \sigma)^{\frac{1}{4\theta}}, (2 \theta\sigma)^{\frac{1}{3\theta}}\}, 
\end{equation}
we have
\begin{equation}
\Big (\frac{8\delta a^3}{\sigma} + \frac{8\theta \delta^2 a^4}{H_0^{4\theta} \sigma} + \frac{3 \theta \delta a^3}{2H^{3\theta}_0} \Big) \leq \frac{10 \delta a^3}{\sigma}. 
\end{equation}
In the next, we make every term of $n_4$ less than $\frac{\delta a^2 \nu}{40\rho_{wls}}$. For the first term of $n_4$,    
\[ \frac{10 \delta a^3}{\sigma} \leq \frac{\delta a^2 \nu}{40 \rho_{wls}} \Longrightarrow a \leq \frac{\gamma}{8000 e\rho_{wls} C_K^2}, \]
we obtain the range of constant $a$
\begin{equation*}
a \leq \min \Big \{ \frac{1}{4C_K + 6\theta + 2}, \frac{\gamma}{C_V^{\theta} + C_K}, \frac{\gamma}{8000 e\rho_{wls} C_K^2} \Big \},  
\end{equation*}
for example, we can take $a$ as
\begin{equation*}
a = \frac{\min\{\gamma,1\}}{8000 e\rho_{wls} C_K^2 + 5C_K + 6\theta + C_V^{\theta} + 2}.
\end{equation*}
For other terms, we obtain the range of diffusion constant $\sigma$ by similar argument of \eqref{1.2-case1-step2-ieq} in the last case,
\begin{equation}
\sigma \geq \max \bigg\{ \frac{800(40+m_2'')\rho_{wls}e}{a^2 \gamma}, 3200\rho_{wls}e\gamma \bigg\} \cdot \max\{C_K^2, C_K^3\},
\end{equation}
Finally, we update \eqref{1.2-case2-step1-last two} as the following
\begin{equation}
\begin{aligned}
\frac{d}{dt}  \mathcal{E}_N^M(f^t_N|f_{\infty}^{\otimes N}) \leq & - \frac{\delta a^2}{16} \frac{1}{N} \int f_N |\nabla_X u|^2 - \frac{\sigma}{16} \frac{1}{N} \int f_N |\nabla_V u|^2 \\ & - \frac{\delta a^2}{16 \rho_{wls}}H_N(f^t_N|f_{\infty}^{\otimes N}) + \frac{C}{N} \\ \leq & -\frac{\delta a^2}{16(\rho_{wls} + 1)}\mathcal{E}_N^M(f^t_N|f_{\infty}^{\otimes N}) + \frac{C}{N},   
\end{aligned}
\end{equation}
we finish the proof of this case.
\end{proof}

\subsection{Proof of Theorem \ref{Thm:theorem1.5}}
In the last subsection, we have shown that $f_N$ converges to $f_{\infty}^{\otimes N}$ in the sense of quantity $\mathcal{E}_N^M(f_N^t|f^{\otimes N}_{\infty})$ as $t \rightarrow \infty$ and $N \rightarrow \infty$. In this subsection, we eliminate the effect of replacement of $f$ by $f_{\infty}$ by proving exponential decay from $f_t$ to $f_{\infty}$. By triangle inequality of Wasserstein distance, the error between $\mathcal{W}_2^2(f^t_N, f_t^{\otimes N})$ and $\mathcal{W}_2^2(f^t_N, f^{\otimes N}_{\infty})$ is only a time exponential decay factor. Combining with short time propagation of chaos result, we obtain uniform-in-time propagation of chaos. \\

\begin{proof}
$Case1.$ In this case, we take weight matrix $M_1$ as \eqref{M:case1-1.2}. We have shown that 
\begin{equation*}
\mathcal{W}_2^2(f^t_N, f^{\otimes N}_{\infty}) \leq H(f^t_N|f_{\infty}^{\otimes N}) \leq  N e^{-c_1t} \mathcal{E}_N^{M_1}(f^0_N|f^{\otimes N}_{\infty}) + C
\end{equation*}
in Theorem \ref{Thm:theorem1.4}, 
where $c_1 = \frac{\delta a^2}{16(1 + \rho_{ls})}$ and $C > 0$ depends on $\sigma, M, C_K$ and $C_V$.  Moreover, Theorem \ref{Thm: theorem-1.1} implies that
\begin{equation*}
\mathcal{W}_2^2(f_t^{\otimes N}, f^{\otimes N}_{\infty}) \leq H(f_t^{\otimes N}|  f^{\otimes N}_{\infty}) \leq \rho_{LS} N e^{-ct} \mathcal{E}^M(f_0|\hat{f}_0)
\end{equation*}
with some $c > 0$ under condition $C_K < 1$. Theorem \ref{Thm:theorem-1.2} also implies the same conclusion under condition that the interaction function $F$ is functional convex (See Assumption \ref{Assumption:W2}). Hence, 
by triangle inequality of $2$-Wasserstein distance, we have
\begin{equation}
\mathcal{W}^2_2(f^t_N,f_t^{\otimes N}) \leq N (1+\rho_{LS})[\mathcal{E}_N^{M_1}(f^0_N|f^{\otimes N}_{\infty}) + \mathcal{E}^M(f_0|\hat{f}_0)]e^{-c_1't} + C,
\end{equation}
where constant $c_1' = \min\{c_1, c\}$. If we take chaotic initial data for Liouville equation \eqref{Eq:Liouville equation}, i.e. $f^0_N = f_0^{\otimes N}$, we have
\begin{equation}
\mathcal{W}^2_2(f^t_N,f_t^{\otimes N}) \leq 2N (1+\rho_{LS})\mathcal{E}^M(f_0|\hat{f}_0)e^{-c_1't} + C.
\end{equation}
For $k$-marginal distribution, we have 
\begin{equation}
\mathcal{W}^2_2(f^t_{N,k}, f_t^{\otimes k}) \leq  2k(1+\rho_{LS})e^{-c_1't} \mathcal{E}^M(f_0|\hat{f}_0) + C\frac{k}{N}.  
\end{equation}

$Case2.$ In this case, we take weight matrix $M_2$ as \eqref{M:case2-1.2}. The main argument of this case is the same with $Case1$. By Theorem \ref{Thm:theorem1.4}, we have
\begin{equation*}
\mathcal{W}_2^2(f^t_N, f^{\otimes N}_{\infty}) \leq H(f^t_N|f_{\infty}^{\otimes N}) \leq  N e^{-c_2t} \mathcal{E}_N^{M_2}(f^0_N|f^{\otimes N}_{\infty}) + C.
\end{equation*}
provided $c_2 = \frac{\delta a^2}{16(1 + \rho_{wls})}$ and $C = C(\sigma, M_2, C_K, C_V^{\theta})$. By Theorem \ref{Thm:theorem-1.2}, we have 
\begin{equation*}
\mathcal{W}_2^2(f_t^{\otimes N}, f^{\otimes N}_{\infty}) \leq H(f_t^{\otimes N}|  f^{\otimes N}_{\infty}) \leq N e^{-c_2''t} \mathcal{E}^{M_2'}(f_0|\hat{f}_0)
\end{equation*}
with constant $c_2'' > 0$ under condition that the interaction function $F$ is functional convex. We can also use the similar result under condition that the constant $C_K$ is small, which have been proved by Guillin, Le Bris and Momarch\'e in \cite{coupling1} (See Theorem 1.1). All in all, we have
\begin{equation*}
\mathcal{W}_2^2(f_t^{\otimes N}, f^{\otimes N}_{\infty}) \leq N  e^{-c''_2t}\mathcal{E}^M(f_0|\hat{f}_0),
\end{equation*}
provided some constant $c_2'' > 0$. Now we take $c_2' = \min\{c_2,c_2''\}$, we have
\begin{equation*}
\mathcal{W}^2_2(f^t_N,f_t^{\otimes N}) \leq N  [\mathcal{E}_N^{M_2}(f^0_N|f^{\otimes N}_{\infty}) + \mathcal{E}^{M'_2}(f_0|\hat{f}_0)]e^{-c'_2t} + C,
\end{equation*}
If we take chaotic initial data for Liouville equation \eqref{Eq:Liouville equation}, i.e. $f^0_N = f_0^{\otimes N}$, we have
\begin{equation}
\mathcal{W}^2_2(f^t_N,f_t^{\otimes N}) \leq 2N  \mathcal{E}^{M'_2}(f_0|\hat{f}_0)e^{-c'_2t} + C.
\end{equation}
For $k$-marginal distribution, we have 
\begin{equation}
\mathcal{W}^2_2(f^t_{N,k}, f_t^{\otimes k}) \leq  2ke^{-c_2't} \mathcal{E}^{M'_2}(f_0|\hat{f}_0) + C \frac{k}{N}.  
\end{equation}
\end{proof}

\section{Appendix}
In appendix, we give the sketch of proof of Theorem \ref{Thm: theorem-1.1} and Theorem \ref{Thm:theorem-1.2}. The main idea of proof has already appeared in other articles, like \cite{momashe2} and \cite{chen2023uniform}. For the completeness of the article, we reprove them under the current setting in the following.

\subsection{Proof of Theorem \ref{Thm: theorem-1.1}}
The main idea originates from Theorem 10 in \cite{UPIandULSI} and Theorem 3 in \cite{momashe2}. By the finite time propagation of chaos result for lipschitz force $\nabla W$ and $\nabla V$, we could conclude that $f_{N,1}(t,z)$ converges to $f(t,z)$ weakly for each $t > 0$, then by lower semi-continuity of relative entropy, we have
\begin{equation*}
H(f|\alpha) \leq \liminf_{N \rightarrow \infty} H(f_{N,1}|\alpha),
\end{equation*}
where $\alpha \propto e^{- \frac{v^2}{2} - V(x)}$, then we get
\begin{equation*}
\begin{split}
\liminf_{N \rightarrow \infty} H_N(f_N|f_{N,\infty}) = \liminf_{N \rightarrow \infty } \left\{ H_N(f_N|\alpha^{\otimes N}) + \frac{1}{2N^2} \int W(x_1 - x_2) f_{N,2} + \frac{1}{N} \log Z_{N,\beta} \right\}.  
\end{split}
\end{equation*}
For the first term, we use Lemma 18 in \cite{UPIandULSI}, we have
\begin{equation}
H_N(f_N| \alpha^{\otimes N}) \geq H(f_{N,1}|\alpha).
\end{equation}
For the second term, we use $|W(x)| \leq C(1 + |x|^2)$ and $f_{N,2}$ weakly converges to $f^{\otimes 2}$ for each $t > 0$, hence
\begin{equation*}
\liminf_{N \rightarrow \infty} \frac{1}{2N^2} \int W(x_1 - x_2) f_{N,2} = \frac{1}{2} \int W \ast \rho \rho, \ \ \rho = \int_{\R^d} f(x,v)dv.
\end{equation*}
For third term, we conclude by \cite{PhLSI} that
\begin{equation*}
\liminf_{N \rightarrow \infty} \frac{1}{N} \log Z_{N, \beta} = - \inf_{f \in \mathcal{M}_1(\Omega)} E(f).
\end{equation*}
Hence we have
\begin{equation*}
H_W(f) \leq \liminf_{N \rightarrow \infty} H_N(f_N|f_{N,\infty}).
\end{equation*} 
Now by Theorem \ref{Thm:theorem1.3} with constant $c > 0$, we obtain that
\begin{equation*}
H_W(f) \leq e^{-ct}\mathcal{E}_N^M(f_0^{\otimes N}|f_{N,\infty}),
\end{equation*}
where the constant $c = c(f_{N,\infty}, M)$ is the same as Theorem \ref{Thm:theorem1.3}.
Furthermore, we use Lemma 17 and Lemma 23 in \cite{UPIandULSI} with $\alpha \propto e^{-\frac{v^2}{2} - V(x)}$, let $N \rightarrow \infty$, we have
\begin{equation*}
H_W(f) \leq e^{-ct}\mathcal{E}^M(f_0|\hat{f}_0).
\end{equation*}
Again, by Theorem 3 in \cite{UPIandULSI}, we have
\begin{equation*}
\mathcal{W}_2^2(f_t, f_{\infty}) \leq \rho_{LS} H_W(f_t) \leq \rho_{LS}\mathcal{E}^M(f_0) e^{-ct}.
\end{equation*}

\subsection{Proof of Theorem \ref{Thm:theorem-1.2}}
In this subsection, we show the convergence from $f$ to $f_{\infty}$ for limiting equation (\ref{Eq:limiting equation}) with more general confinement potential $V$. We use the quantity $\mathcal{E}^M(f_t|\hat{f}_t)$ to show the convergence, which combines free energy and relative Fisher Information of $f_t$ and $\hat{f}_t$, i.e.
\begin{equation}
\mathcal{E}^M(f_t|\hat{f}_t) = \mathcal{F}(f_t) - \mathcal{F}(f_{\infty}) + I^M(f_t|\hat{f}_t),
\end{equation}
where $\hat{f}_t \propto e^{-\frac{1}{2}v^2 - V(x) - W \ast \rho_t}$, the later term is
\begin{equation}
I^M(f_t| \hat{f}_t) = \int f_t \z \nabla u, M \nabla u \y dz, 
\end{equation}
where $\nabla = (\nabla_x, \nabla_v)$, $u = \log \frac{f_t}{\hat{f}_t}$ and $M$ is the weighted matrix which is to be decided. We follow the idea of Theorem 2.1 in \cite{chen2023uniform} to deal with the nonlinear term $\nabla W \ast \rho_t $ of Eq.(\ref{Eq:limiting equation}). Now let us compute the time evolution of $\mathcal{E}^M(f_t|\hat{f}_t)$. We omit the footnote ``$t$" for convenience. For free energy part $\mathcal{F}(f)$, we directly use the result in Theorem 2.1 in \cite{chen2023uniform},
\begin{equation}\label{1.5-v1}
\frac{d}{dt} \mathcal{F}(f) = - \sigma \int f|\nabla_v u|^2 dz.
\end{equation}
For distorted Fisher Information part, we formally write the equation of $\log \hat{f}$ as following
\begin{equation*}
\begin{split}
\partial_t \log \hat{f} + v \cdot \nabla_x \log \hat{f} - [\nabla V + (K \ast \rho)] \cdot \nabla_v \log \hat{f} = & \sigma \frac{\Delta_v \hat{f}}{\hat{f}} + \gamma (d + v \cdot \nabla_v \log \hat{f}) \\ & - \partial_t (\log Z_t + W \ast \rho),
\end{split}
\end{equation*}
by similar computation of Lemma \ref{lemma:Eq-u}, we have the equation of $u = \log \frac{f}{\hat{f}}$,

\begin{equation}
\partial_t u = - B_t u - \sigma A^{\ast} A + \sigma \nabla_v \log f \cdot Au + R, 
\end{equation}
where $B_t = v \cdot \nabla_x - (\nabla V + \nabla W \ast \rho_t) \cdot \nabla_v - \gamma v \cdot \nabla_v$, $A = \nabla_v$, $A^{\ast} = - \nabla_v + v$ which is conjugate operator of $A$ in the sense of $L^2(\hat{f})$, $R = \partial_t (\log Z_t + W \ast \rho)$. Then by Lemma \ref{lemma:Keylemma2} and Corollary \ref{Cor:block matrix}, if we take $
M = 
\left(
\begin{array}{cc}
e  & f \\
f  & g
\end{array}
\right)$, 
where $e,f,g: \Omega \times \R^d \rightarrow \R^{d \times d}$ are smooth matrix-valued functions, then we have
\begin{equation}\label{IEq:evlov-Fisher-f}
\begin{split}
\frac{d}{dt} \left\{\int f \z \nabla u, M \nabla u \y \right\} \leq & - \int f \z \nabla u, S \nabla u \y + 2 \int f \z \nabla R, M \nabla u\y,
\end{split}
\end{equation}
where $S(Z)$ is a matrix reads as
\begin{equation*}
S = 
\left(
\begin{array}{cc}
2e - L^{\ast} e \ \ & 4 \gamma f - L^{\ast} f - 2e \nabla^2 U \\
2c - L^{\ast} f \ \ & 4 \gamma c - L^{\ast} c - 2f \nabla^2 U
\end{array}
\right),
\end{equation*}
and $\nabla^2 U = \nabla^2 V + \nabla^2 W \ast \rho$, and $L^{\ast} = B + \sigma \Delta_v$. Now we follow the similar argument of the Case 2 of Theorem \ref{Thm:theorem1.4}, we select $e(z) = \delta a^3 (H(z))^{-3\theta}, 
f(z) = \delta a^2 (H(z))^{-2\theta}, 
g(z) = 2\delta a(H(z))^{-\theta}$ where $\delta > 0, a > 0$ are to be confirmed. We firstly estimate the upper bound of $L^{\ast}e, L^{\ast}f$ and $L^{\ast}g$. For some $S>0$, we have 
\begin{equation*}
\begin{aligned}
L^{\ast}(H^{-s\theta}(z)) = -s\theta \left\{ \frac{\sigma d}{H} + \frac{\sigma (-3\theta -1)v^2}{H^2} - \frac{\gamma v^2 + v \cdot \nabla W \ast \rho(x)}{H} \right\}H^{-s\theta}(z),
\end{aligned}
\end{equation*}
then we have
\begin{equation*}
|L^{\ast}(H^{-s\theta}(z))| \leq s\theta \big(\frac{\sigma d + \sigma(s\theta + 1)}{H_0} + \gamma + 1 \big) H^{-s \theta}(z). 
\end{equation*}
Now we can replace the norm $\|\nabla W\|_{L^{\infty}}$ by $1$ in the Case 2 in Theorem \ref{Thm:theorem1.4} and use completely the same argument of \eqref{1.2-case2-step1-XX}-\eqref{1.2-case2-step1-VX}, 
we have
\begin{equation}\label{1.5-xv}
\begin{split}
- \int f\z \nabla u, S \nabla u\y \leq - \frac{3}{4} \delta a^2 \int f H^{-2\theta}(z)|\nabla_xu|^2 + m_2 \int f |\nabla_vu|^2,  \end{split}
\end{equation}
where $m_2 = \delta [4 + 6\gamma a + 4a\theta(2\gamma + 1)]^2 + \delta a[6\gamma + \theta(2\gamma + 1)]$ and $a, H_0$ satisfy
\begin{equation*}
H_0 \geq \max \Big \{\frac{\sigma(d + 3 \theta + 1)}{\gamma}, [3\theta a (2\gamma + 1)]^{\frac{1}{\theta}},1 \Big \}, \ \ a \leq \frac{\gamma}{C^{\theta}_V + C_K}.
\end{equation*}
For second term of (\ref{IEq:evlov-Fisher-f}), we have $\nabla R = \nabla W \ast (\partial_t \rho)$, but 
\begin{equation}
\partial_t \rho + \nabla_x \cdot (v_t^x \rho_t) = 0
\end{equation}
where
\begin{equation*}
v_t^x(x) = \frac{\int vf(x,v)dv}{\int f(x,v)dv} = \frac{\int (\nabla_v u) f(x,v)dv}{\int f(x,v)dv}. 
\end{equation*}
By (4.14) of \cite{chen2023uniform}, we then have
\begin{equation*}
|\nabla W \ast (\partial_t \rho)| = |\nabla^2 W \ast (v_t^x \rho_t)| = \left| \int \nabla^2 W(x-y) \cdot v_t^{y} \rho(y)dy \right| \leq C_K \| \nabla_v u \|_{L^2(f)},
\end{equation*}
therefore
\begin{equation}\label{IEq:nonlinear error}
\begin{split}
\int f \z \nabla R, M \nabla u\y \leq & C_K \| \nabla_v u \|_{L^2(f)} \| M \nabla u \|_{L^2(f)} \\ \leq & \frac{\sigma}{4} \int f |\nabla_v u|^2 + \frac{C^2_K}{\sigma} \int f \z \nabla u, M^2 \nabla u \y \\ \leq & \frac{\sigma}{4} \int f |\nabla_v u|^2 + \frac{\delta^2 a^2 C^2_K}{\sigma}[(a^2 + 4)+2(a^2 + 2)^2]\int f |\nabla_v u|^2 \\ & + \frac{\delta^2 a^4 C^2_K}{\sigma}(a^2 + 3)\int f H^{-2\theta}(z)|\nabla_x u|^2.
\end{split} 
\end{equation}
Combining \eqref{1.5-v1}, \eqref{1.5-xv} and \eqref{IEq:nonlinear error}, we have
\begin{equation*}
\begin{aligned}
\frac{d}{dt} \mathcal{E}^M(f|\hat{f}) \leq & (-\frac{3\sigma}{4} + m_2)\int f |\nabla_v u|^2 + \frac{\delta^2 a^2 C^2_K}{\sigma}[(a^2 + 4)+2(a^2 + 2)^2]\int f |\nabla_v u|^2 \\ & + [-\frac{3}{4}\delta a^2 + \frac{\delta^2 a^4 C^2_K}{\sigma}(a^2 + 3)]\int f H^{-2\theta}(z)|\nabla_x u|^2.
\end{aligned}
\end{equation*}
Now we choose $\delta$ such that
\begin{equation}
\left\{
\begin{aligned}
& \frac{\delta^2 a^2 C^2_K}{\sigma}[(a^2 + 4)+2(a^2 + 2)^2] \leq \frac{\sigma}{4}, \\ & \frac{\delta^2 a^4 C^2_K}{\sigma}(a^2 + 3) \leq \frac{1}{4} \delta a^2, \\ & m_2 \leq \frac{\sigma}{4},
\end{aligned}
\right.
\end{equation}
i.e.
\begin{equation}
\delta \leq \min \left\{\frac{\sigma}{2aC_K \sqrt{[(a^2 + 4)+2(a^2 + 2)^2]}}, \frac{\sigma}{4a^2C^2_K(a^2 + 3)}, \frac{\sigma}{4m_2'}\right\},
\end{equation}
where $m_2' = [4 + 6\gamma a + 4a\theta(2\gamma + 1)]^2 + a[6\gamma + \theta(2\gamma + 1)]$,
then we have
\begin{equation}
\frac{d}{dt}\mathcal{E}^M(f|\hat{f}) \leq -\frac{\sigma}{4}\int f |\nabla_v u|^2 - \frac{1}{4}\delta a^2 \int f H^{-2\theta}(z)|\nabla_x u|^2. 
\end{equation}
By weighted Log Sobolev inequality of $\hat{f}$ and $\delta a^2 < \sigma$, we have 
\begin{equation*}
\frac{d}{dt} \mathcal{E}^M(f_t|\hat{f}_t) \leq - \frac{\delta a^2}{8} I^M(f|\hat{f}) - \frac{\delta a^2}{8 \rho_{wls}}H(f|\hat{f}),
\end{equation*}
furthermore, by convexity of functional $F$, we have entropy sandwich inequality $\mathcal{F}(f_t) - \mathcal{F}({f_{\infty}}) \leq H(f_t|\hat{f}_t)$, we have
\begin{equation*}
\frac{d}{dt} \mathcal{E}^M(f_t|\hat{f}_t) \leq  - \frac{\delta a^2}{16 + 16 \rho_{wls}} \mathcal{E}^M(f_t|\hat{f}_t),
\end{equation*}
then we finish the proof.

\subsection*{Acknowledgements}
Zhenfu Wang was partially supported by  the National Key R\&D Program of China, Project Number 2021YFA1002800, NSFC grant No.12171009 and  Young
Elite Scientist Sponsorship Program by China Association for Science and Technology (CAST) No. YESS20200028.

\bibliographystyle{plain}
\bibliography{Reference}

\end{document}